\documentclass[a4paper,12pt]{article}
\usepackage{latexsym,amsmath,amssymb,amsthm,enumerate,eucal,subfig,array,
  ctable}

\usepackage{graphicx}
\usepackage{xcolor}
\newtheorem{theorem}{Theorem}
\newtheorem{lemma}[theorem]{Lemma}
\newtheorem{proposition}[theorem]{Proposition}

\newtheorem{conjecture}[theorem]{Conjecture}

\newtheorem{observation}[theorem]{Observation}

\newcommand\Setx[1] {\left\{{#1}\right\}}

\newcommand\size[1] {\left|{#1}\right|}

\newcommand{\prob}[1]{\mathbf{P}(#1)} %
\newcommand{\cprob}[2]{\mathbf{P}(#1\ |\ #2)} %
\newcommand{\eps}{\varepsilon}

\newcommand{\pair}[3]{(#1,#2)^{#3}}

\newcommand{\bd}[1]{\partial(#1)}
\newcommand{\nbrx}[1]{\tilde N(#1)}
\newcommand{\nbrxc}[1]{\tilde N[#1]}
\newcommand{\vectrans}[1]{{\vec #1}\hspace{2pt}^T}

\newcommand{\mate}[2]{({#1}_{#2})'}

\newcommand{\fig}[1]{\includegraphics[page=#1]{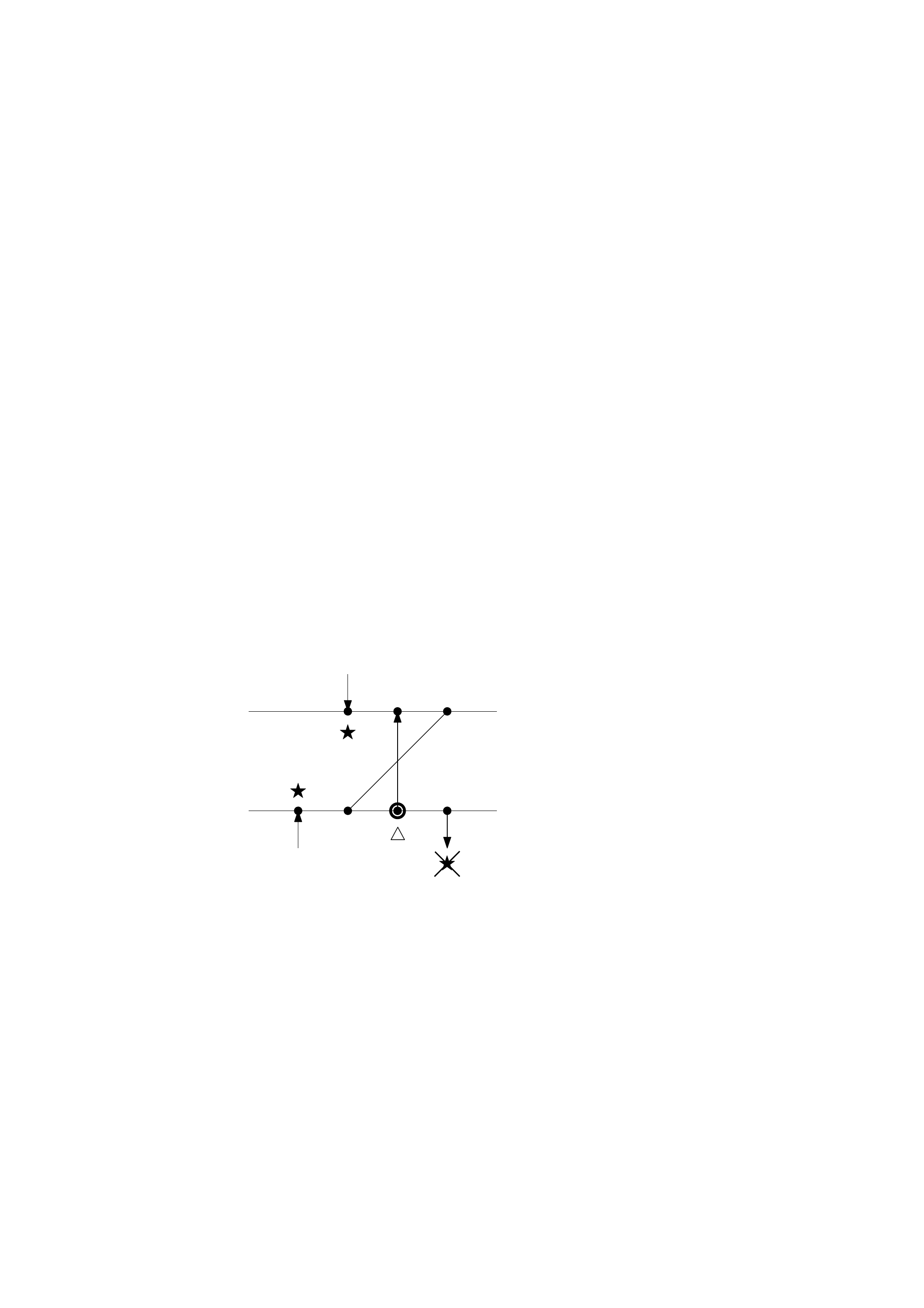}}%
\newcommand{\sfig}[2]{\subfloat[#2]{\fig{#1}}}%
\newcommand{\sfigdef}[1]{\newbox{\base}\sbox{\base}{\fig{#1}}}
\newcommand{\sfigtop}[2]{\newbox{\pic}\sbox{\pic}{\fig{#1}}%
  \subfloat[#2]{\vbox to\ht\base{\hbox to\wd\pic{\usebox\pic}}}}
\newcommand{\hf}{\hspace*{0pt}\hspace{\fill}\hspace*{0pt}}

\newenvironment{xcase}[1]%
{\vspace{-1mm}\par\noindent
  \xcasehdr{#1}\upshape
  \vspace{2mm}\par\noindent}
{\hspace*{0mm}\hspace{\fill}$\blacktriangle$\par\vspace{2mm}}

\newlength{\zero}
\setlength{\zero}{5cm}

\newenvironment{xxcase}[1]%
{
  \par\noindent
  \hangindent\parindent
  \hangafter=0
  \xxcasehdr{#1}\vspace{2mm}\par\upshape\noindent}
{\hspace*{0mm}\hspace{\fill}$\triangle$\par\vspace{4mm}}


\title{\textbf{The fractional chromatic number\\of triangle-free
    subcubic graphs}\footnote{This research was supported by project
    GA\v{C}R 201/09/0197 of the Czech Science Foundation.}} %
\author{David G. Ferguson$^1$\and Tom\'{a}\v{s} Kaiser$^2$\and Daniel
  Kr\'{a}l'$^3$}

\date{Revision R2 (April 27, 2013)}

\begin{document}
\maketitle

\footnotetext[1]{School of Business, University of Buckingham, Hunter
  Street, Buckingham, MK18 1EG, UK and Department of Mathematics,
  London School of Economics, Houghton Street, London, WC2A 2AE,
  UK. Email: \texttt{david.ferguson@buckingham.ac.uk} or
  \texttt{d.g.ferguson@lse.ac.uk}. Work on this paper was partly done
  during a visit to Department of Applied Mathematics, Charles
  University, Prague, Czech Republic.}%
\footnotetext[2]{Department of Mathematics, Institute for Theoretical
  Computer Science and NTIS--New Technologies for the Information
  Society (European Centre of Excellence), University of West Bohemia,
  Univerzitn\'{\i}~8, 306~14~Plze\v{n}, Czech Republic. Supported by
  project P202/12/G061 of the Czech Science Foundation. E-mail:
  \texttt{kaisert@kma.zcu.cz}.}%
\footnotetext[3]{Institute of Mathematics, DIMAP and Department of
  Computer Science, University of Warwick, Coventry CV4 7AL, United
  Kingdom. Previous affiliation: Institute of Computer Science (IUUK),
  Faculty of Mathematics and Physics, Malostransk\'{e}
  n\'{a}m\v{e}st\'{\i} 25, 118~00~Prague, Czech Republic. E-mail:
  \texttt{D.Kral@warwick.ac.uk}.}

\begin{abstract}
  Heckman and Thomas conjectured that the fractional chromatic number
  of any triangle-free subcubic graph is at most $14/5$. Improving on
  estimates of Hatami and Zhu and of Lu and Peng, we prove that the
  fractional chromatic number of any triangle-free subcubic graph is
  at most $32/11 \approx 2.909$.
\end{abstract}

\section{Introduction}
\label{sec:intro}

When considering the chromatic number of certain graphs, one may
notice colourings which are best possible (in that they use as few
colours as possible) but which are in some sense wasteful. For
instance, an odd cycle cannot be properly coloured with two colours
but can be coloured using three colours in such a way that the third
colour is used only once.

Indeed if $C_7$ has vertices $v_1,v_2,v_3,\dots,v_7$, then we can
colour $v_1,v_3,v_5$~red, $v_2,v_4,v_6$ blue and $v_7$ green. If,
however, our aim is instead to assign multiple colours to each vertex
such that adjacent vertices receive disjoint lists of colours, then we
could double-colour $C_7$ using five (rather than six) colours and
triple-colour it using seven (rather than nine) colours in such a way
that each colour is used exactly three times --- colour $v_i$ with
colours $3i, 3i+1, 3i+2$ (mod $7$). Asking for the minimum of the
ratio of colours required to the number of colours assigned to each
vertex gives us a generalisation of the chromatic number.

Alternatively, for a graph $G=(V,E)$ we can consider a function $w$
assigning to each independent set of vertices $I$ a real number
$w(I)\in [0,1]$. We call such a function a \textit{weighting}. The
weight $w[v]$ of a vertex $v\in V$ with respect to $w$ is then defined
to be the sum of $w(I)$ over all independent sets containing $v$. A
weighting $w$ is a \textit{fractional colouring} of $G$ if for each
$v\in V$ $w[v] \geq 1$. The size $|w|$ of a fractional colouring is
the sum of $w(I)$ over all independent sets $I$. The fractional
chromatic number $\chi_{f}(G)$ is then defined to be the infimum of
$|w|$ over all possible fractional colourings. We refer the reader to
\cite{bib:SU-fractional} for more information on fractional colourings
and the related theory.

By a folklore result, the above two definitions of the fractional
chromatic number are equivalent to each other and to a third,
probabilistic, definition. It is this third definition which we will
make most use of:

\begin{lemma}
\label{l:fraccol}
Let $G$ be a graph and $k$ a positive rational number. The following
are equivalent:
\begin{enumerate}[\quad(i)]
\item $\chi_f(G) \leq k$,
\item there exists an integer $N$ and a multi-set $\cal W$ of $kN$
  independent sets in $G$ such that each vertex is contained in
  exactly $N$ sets of $\cal W$,
\item there exists a probability distribution $\pi$ on the independent
  sets of $G$ such that for each vertex $v$, the probability that $v$
  is contained in a random independent set (with respect to $\pi$) is
  at least $1/k$.
\end{enumerate}
\end{lemma}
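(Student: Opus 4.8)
All three statements assert that the optimum of one and the same linear program---the fractional covering of $V(G)$ by independent sets---is at most $k$, written respectively in LP language, in ``common denominator'' (integral) language, and after rescaling an optimal weighting to a probability distribution. I would therefore prove the equivalence cyclically, $(i)\Rightarrow(ii)\Rightarrow(iii)\Rightarrow(i)$, assuming throughout that $G$ is finite, so that the family of independent sets of $G$---over which every weighting and distribution in the statement lives---is finite.

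For $(i)\Rightarrow(ii)$ I would first observe that the set $Q$ of all weightings $w$ (which by definition take values in $[0,1]$) with $w[v]\ge 1$ for every $v$ is a nonempty polytope---it contains the weighting giving value $1$ to each singleton and $0$ elsewhere---cut out by finitely many inequalities with rational coefficients; hence $w\mapsto\size{w}$ attains its minimum on $Q$ at a rational vertex $w^\ast$, and $\size{w^\ast}=\chi_f(G)\le k$ by definition of $\chi_f$. Choosing a positive integer $N$ that is a common denominator of $k$ and of the finitely many numbers $w^\ast(I)$, I would take the multi-set containing $N\,w^\ast(I)$ copies of each independent set $I$; it has $N\size{w^\ast}\le kN$ members, and each vertex lies in $N\,w^\ast[v]\ge N$ of them. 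To turn this into the certificate required by $(ii)$ I would then (a) add $kN-N\size{w^\ast}$ copies of the empty set, reaching exactly $kN$ members without changing any vertex count, and (b) repeatedly repair over-coverage: as long as some vertex $v$ lies in more than $N$ of the chosen sets, replace one such set $I\ni v$ by $I\sm\Setx{v}$, which is still independent. Each such step preserves the total number of members, preserves the count of every vertex other than $v$, and lowers the count of $v$ by one, so the process terminates with every vertex in exactly $N$ sets.

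The remaining two implications are routine rescalings. For $(ii)\Rightarrow(iii)$ I would let $\pi$ be the uniform distribution on the $kN$-element multi-set $\cal W$ from $(ii)$: a vertex $v$ lies in exactly $N$ of these $kN$ sets, so a $\pi$-random set contains $v$ with probability $N/(kN)=1/k$. For $(iii)\Rightarrow(i)$, given such a $\pi$ I would set $w(I)=\min\Setx{k\,\pi(I),\,1}\in[0,1]$; if some $v$ had $w[v]<1$ then no summand of $w[v]$ would be truncated, so $w[v]=k\cdot\prob{v\in I}\ge k\cdot\tfrac1k=1$, a contradiction, whence $w$ is a fractional colouring with $\size{w}\le\sum_I k\,\pi(I)=k$ and thus $\chi_f(G)\le k$. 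The only implication that is not a one-line rescaling is $(i)\Rightarrow(ii)$; the point there that I expect to be the main obstacle is the passage from a possibly irrational LP optimum to an honestly integral certificate, which is handled by rationality of vertices of rational polytopes, while the padding and the $I\mapsto I\sm\Setx{v}$ repair are forced only because $(ii)$ asks for \emph{exactly} $N$ sets through each vertex and \emph{exactly} $kN$ sets in total rather than one-sided bounds.
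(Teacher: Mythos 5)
The paper itself gives no proof of this lemma: it is stated as a folklore equivalence, with the reader referred to Scheinerman and Ullman for background, so there is no ``paper proof'' to compare against. Your argument is correct and complete as a proof of the folklore statement. The cyclic scheme $(i)\Rightarrow(ii)\Rightarrow(iii)\Rightarrow(i)$ is the standard one; the two points that usually require care are exactly the ones you handle explicitly, namely that the LP optimum is attained at a rational vertex of the (compact, rationally defined) polytope of fractional colourings, so that a common denominator $N$ with $kN\in\mathbb{Z}$ exists, and that the upper bound $w(I)\le 1$ in the paper's definition of a weighting is harmless, which your truncation $w(I)=\min\{k\pi(I),1\}$ together with the observation that an uncovered vertex would force all its summands to be untruncated settles cleanly. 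The padding by empty sets and the $I\mapsto I\setminus\{v\}$ repair step are also sound: the repair preserves the invariant that every vertex lies in at least $N$ sets while strictly decreasing the total excess, so it terminates with exact counts; and even if one insisted on avoiding the empty set, padding with singletons followed by the same repair would do.
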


In this paper, we consider the problem of bounding the fractional
chromatic number of a graph that has maximum degree at most three (we
call such graphs \emph{subcubic}) and contains no triangle. Brooks'
theorem (see, e.g., \cite[Theorem 5.2.4]{bib:Die-graph}) asserts that
such graphs have chromatic number at most three, and, thus, also have
fractional chromatic number at most three. On the other hand,
Fajtlowicz~\cite{bib:Faj-size} observed that the independence number
of the generalised Petersen Graph $P(7,2)$ (Figure~\ref{fig:gp72})
equals 5, which implies that $\chi_f(P(7,2))=14/5=2.8$.

\begin{figure}
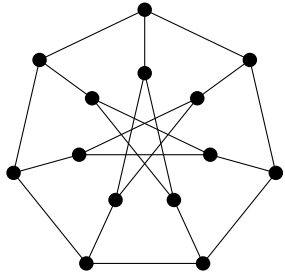

  \centering
  \fig{60}
  \caption{The generalised Petersen graph.}
  \label{fig:gp72}
\end{figure}

In 2001, Heckman and Thomas~\cite{bib:HT-new} made the following
conjecture:

\begin{conjecture}\label{conj:ht}
  The fractional chromatic number of any triangle-free subcubic graph
  $G$ is at most $2.8$.
\end{conjecture}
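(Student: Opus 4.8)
The plan is to use the probabilistic form of the fractional chromatic number, Lemma~\ref{l:fraccol}(iii): to prove Conjecture~\ref{conj:ht} it suffices to show that every triangle-free subcubic graph $G$ carries a probability distribution on its independent sets under which each vertex belongs to the random independent set with probability at least $1/2.8 = 5/14$. I would argue by contradiction, letting $G$ be a counterexample with $\size{V(G)}$ minimum. A short chain of local reductions then constrains $G$: it must be connected; vertices of degree at most two can be suppressed or have their neighbourhoods joined (checking in each case that triangle-freeness survives and that a good distribution for the smaller graph lifts to $G$), so $G$ is cubic; and $4$-cycles can be eliminated by contraction, after which $G$ has girth at least five. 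Since the target $5/14$ is attained by the generalised Petersen graph $P(7,2)$ of Figure~\ref{fig:gp72}, this also tells us which local configurations the argument must treat as extremal.

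For cubic graphs of girth at least five I would build the distribution from a localised random process, the natural candidate being the random greedy independent set obtained from a uniformly random ordering of $V(G)$, or a weighted refinement of it tuned to the extremal configuration. Because the girth is at least five, the ball of radius two about any vertex $v$ is a tree, so the probability that $v$ is selected is determined by a bounded neighbourhood and can be estimated configuration by configuration. The aim is a dichotomy: either $G$ contains one of a finite list of \emph{reducible} subconfigurations, each of which contradicts minimality by a gluing argument --- cut $G$ along the small edge- or vertex-cut that bounds the configuration, apply induction to the pieces, and recombine the boundary marginals into a distribution for $G$ --- or else the process already achieves coverage probability at least $5/14$ at every vertex. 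A discharging argument then finishes: give each vertex an initial charge equal to its deficiency against $5/14$, move charge along edges using the structural restrictions, and show the resulting total is incompatible with $G$ being cubic.

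The hard part, and where the genuine difficulty lies, is forcing the two sides of this dichotomy to meet exactly at the threshold $5/14$ rather than at some weaker value. One needs a finite configuration list that is simultaneously \emph{unavoidable} (every cubic triangle-free graph of girth at least five avoiding it admits the required every-vertex-$5/14$ distribution) and \emph{reducible} (each listed configuration falls to gluing). The extremal objects --- $P(7,2)$ and graphs assembled from copies of it --- leave essentially no slack, so the probabilistic estimates must be sharp and the configuration list large; I would expect to need a substantially expanded, very possibly computer-verified, analysis of bounded-radius configurations, supplemented by a global ingredient such as an edge-cut reduction ruling out cyclically $4$-edge-connected minimal counterexamples so as to bypass the tightest local cases. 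If unavoidability and reducibility can be made to match precisely at $5/14$, the contradiction is complete and Conjecture~\ref{conj:ht} follows; achieving that match is exactly the obstacle.
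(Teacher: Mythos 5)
The statement you were asked about is Conjecture~\ref{conj:ht}, and the paper does \emph{not} prove it: the authors explicitly only establish the weaker bound $32/11\approx 2.909$ (Theorem~\ref{t:main}), remark that the conjecture was proved later by Dvo\v{r}\'{a}k, Sereni and Volec, and do so by a concrete probabilistic algorithm (Algorithm~1/2 built on a 2-factor) rather than by a minimal-counterexample/discharging scheme. So there is no proof in the paper to compare against, and the correct reaction is to say the conjecture is open \emph{in this paper}.

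More importantly, what you have written is not a proof but a research programme, and you say so yourself: the entire weight rests on producing a finite configuration list that is simultaneously unavoidable and reducible with the estimates meeting \emph{exactly} at $5/14$, and you conclude ``achieving that match is exactly the obstacle.'' That is the theorem, not a step towards it. Several of the auxiliary reductions are also not safe as stated. Contracting a $4$-cycle in a triangle-free cubic graph can create triangles or parallel edges, and even when it does not, lifting a fractional colouring of the contraction back to $G$ without loss is not automatic; you would need a specific gadget argument, and none is given. The claim that girth at least five makes the radius-two ball a tree is false in general (two vertices at distance two from $v$ may be adjacent, giving a $5$-cycle through $v$); you would need girth at least six for that, or else handle the extra edges. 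Finally, ``random greedy from a uniformly random ordering'' is known not to reach $5/14$ on $P(7,2)$ without modification, so the ``weighted refinement tuned to the extremal configuration'' is load-bearing and entirely unspecified.

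If you want an argument in the spirit of this paper, the route is very different: fix a $2$-factor with the edge-cut property of Theorem~\ref{t:ks}, run the multi-phase randomised algorithm to produce an independent set, and carry out a long case analysis of local templates (Sections~\ref{sec:events}--\ref{sec:phase5}) to lower-bound $\prob{u\in I}$ at every vertex, with a compensation step for deficient vertices. Even that, pushed as hard as here, only reaches $11/32$ per vertex, i.e.\ $\chi_f\le 32/11$, not $5/14$. Reaching $5/14$ required a substantially sharper machine (Dvo\v{r}\'{a}k--Sereni--Volec). Your sketch correctly identifies the target probability $5/14$ and the role of $P(7,2)$ as the tight example, but it does not supply the content needed to cross the gap.
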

Conjecture~\ref{conj:ht} is based on the result of
Staton~\cite{bib:Sta-some} (see also \cite{bib:Jon-size,bib:HT-new})
that any triangle-free subcubic graph contains an independent set of
size at least $5n/14$, where $n$ is the number of vertices of $G$. As
shown by the graph $P(7,2)$, this result is optimal.

Hatami and Zhu~\cite{bib:HZ-fractional} proved that under the same
assumptions, $\chi_f(G)\leq 3-3/64 \approx 2.953$. More recently, Lu
and Peng \cite{bib:LP-fractional} were able to improve this bound to
$\chi_f(G)\leq 3- 3/43 \approx 2.930$. We offer a new probabilistic
proof which improves this bound as follows:

\begin{theorem}
  \label{t:main}
  The fractional chromatic number of any triangle-free subcubic graph
  is at most $32/11 \approx 2.909$.
\end{theorem}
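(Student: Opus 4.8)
The plan is to use the probabilistic reformulation in Lemma~\ref{l:fraccol}(iii): it suffices to show that every triangle-free subcubic graph $G$ carries a probability distribution $\pi$ on its independent sets with $\Pr_\pi[v\in I]\ge 11/32$ for every $v\in V(G)$. I would take a counterexample $G$ minimising $|V(G)|+|E(G)|$ and first pare it down to a clean class: using that a low-degree vertex, a cut vertex, and a bridge can each be removed by a small local surgery after which a good distribution for the resulting (still triangle-free subcubic) graph is lifted back --- by permuting the colour classes it is built from and padding the easy piece --- one reaches the case where $G$ is a connected bridgeless cubic triangle-free graph. In particular, by Petersen's theorem $G$ has a $2$-factor $F$, and, as $G$ has no triangle, every cycle of $F$ has length at least $4$.

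The core is to beat the trivial bound $1/3$ coming from a Brooks $3$-colouring by exploiting, around every vertex, slack of the kind seen for $C_7$ in the introduction, where rotating a maximum independent set over all seven shifts covers each vertex with probability $3/7>11/32$. One natural way to globalise this is: for each cycle of $F$ choose independently a uniformly random phase of a maximum independent set of that cycle, then repair the conflicts created by the chords $M=E(G)\setminus E(F)$ by a deletion rule, and bound, vertex by vertex, the probability lost to the odd-cycle defect and to these deletions. Before repair, each vertex is covered with probability at least $\lfloor\ell/2\rfloor/\ell\ge 2/5$, where $\ell\ge 4$ (and $\ell=3$ is impossible), while only $11/32$ is required, so there is a positive budget $2/5-11/32=9/160$ of loss allowed at each vertex; the task is to design the pruning rule --- possibly correlating the phases of cycles that share chords --- so that this budget is never exhausted.

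The step I expect to be the main obstacle is controlling the dense local configurations: short cycles of $F$ (lengths $4$ and $5$) joined by many chords, the kind of structure occurring in $P(7,2)$, which is extremal both for Staton's independence bound and for this theorem. There a vertex can be hit by chord conflicts on several sides and the naive budget becomes tight or is violated, so one is forced into a detailed analysis of how short $F$-cycles and their chords can be arranged around a vertex (hence the \texttt{Case}/\texttt{Subcase} apparatus), in each configuration exhibiting an explicit local distribution and checking the $11/32$ bound at every interface vertex; equivalently, one singles these configurations out as reducible and shows by a global counting argument on the cubic graph that a counterexample avoiding all of them cannot exist. Getting a consistent list of such configurations is what pins the constant at $32/11$ rather than something smaller, and is where essentially all the difficulty lies; the reductions of the first paragraph and the generic cycle-rotation estimate are comparatively routine, though the surgeries for low-degree vertices and small cuts still need to be set up with some care.
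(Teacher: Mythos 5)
There is a genuine gap, and it is in the step you label ``comparatively routine''. In your scheme (independent uniform rotation of a maximum independent set on each cycle of $F$, then a deletion rule along the chords $M$), each vertex lies in exactly one chord, and every conflict on a chord forces the deletion of at least one of its two endpoints. Take a single chord $uu'$ joining two $5$-cycles of $F$: with independent phases the conflict probability is exactly $(2/5)^2=4/25=0.16$, while the combined slack of the two endpoints is only $2\,(2/5-11/32)=18/160=0.1125$, so under any deletion rule at least one endpoint ends up below $11/32$. The same failure occurs for a chord joining two $7$-cycles ($9/49\approx 0.184$ versus $2\,(3/7-11/32)=19/112\approx 0.170$), for a $5$--$7$ or $5$--$9$ chord, etc. These are not ``dense local configurations of short cycles joined by many chords'' in the sense of $P(7,2)$ --- a single chord between two odd cycles in an otherwise arbitrary graph already exhausts the budget. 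So the quantitative claim that the $9/160$ budget survives outside a finite list of exceptional configurations is false, and the burden shifts entirely to the device you leave unspecified: a global coupling of the phases (each odd cycle meets up to $\ell$ chords leading to up to $\ell$ different cycles, so pairwise decorrelation cannot be arranged independently chord by chord) together with a rule for re-inserting vertices that become uncovered, since pure deletion caps an odd-cycle vertex at $\lfloor\ell/2\rfloor/\ell$ and gives away nothing back. Designing and analysing that mechanism is where essentially all of the work lies; as it stands the proposal has no argument there, and the fallback (``single these out as reducible and do a global counting argument'') names no reducible configurations and no discharging scheme.

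For comparison, the paper does not rotate independent sets per cycle of $F$ at all. It first fixes a 2-factor with extra properties (Theorem~\ref{t:ks}, chosen with as many components as possible --- these properties are used repeatedly in the case analysis via Lemma~\ref{l:split-cycle}, so plain Petersen is not enough), then orients the perfect matching $M$ uniformly at random, so that of each pair of mates exactly one is ``active''; maximum independent sets are chosen inside the active runs of $F$ (Phase 1), isolated actives and then ``feasible'' vertices are added in Phases 2--4, and a final redistribution step (Phase 5, via sponsors and Lemma~\ref{l:events}) moves probability mass to the ``deficient'' vertices that the local analysis cannot get up to $88/256$. Conflicts across $M$ never arise in Phase 1 by construction, which is exactly the correlation your plan would need to invent; the long template/diagram case analysis then certifies $\prob{u\in I}\geq 11/32$ vertex by vertex. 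Your reduction to the bridgeless cubic case is in the same spirit as Section~\ref{sec:subcubic} and is fine, but the core of your proof is missing rather than routine.
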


We remark that while this paper was under review, Dvo\v{r}\'{a}k,
Sereni and Volec~\cite{bib:DSV-subcubic} succeeded in proving
Conjecture~\ref{conj:ht}. Their result was preceded by an improvement
of the bound in Theorem~\ref{t:main} to $43/15 \approx 2.867$ due to
Liu~\cite{bib:Liu-upper}.

In the rest of this section, we review the necessary terminology. The
\emph{length} of a path $P$, denoted by $\size P$, is the number of
its edges. We use the following notation for paths. If $P$ is a path
and $x,y\in V(P)$, then $xPy$ is the subpath of $P$ between $x$ and
$y$. The same notation is used when $P$ is a cycle with a specified
orientation, in which case $xPy$ is the subpath of $P$ between $x$ and
$y$ which follows $x$ with respect to the orientation. In both cases,
we write $d_P(x,y)$ for $\size{xPy}$.

We distinguish between edges in undirected graphs and arcs in directed
graphs. If $xy$ is an arc, then $x$ is its \emph{tail} and $y$ its
\emph{head}.

If $G$ is a graph and $X,Y\subseteq V(G)$, then $E(X,Y)$ is the set of
edges of $G$ with one endvertex in $X$ and the other one in $Y$. We
let $\bd X$ denote the set $E(X,V(G)-X)$. For a subgraph $H\subseteq
G$, we write $\bd H$ for $\bd{V(H)}$, and we extend the definition of
the symbol $E(X,Y)$ to subgraphs in an analogous way. The
\emph{neighbourhood} of a vertex $u$ of $G$ is the set $N(u)$ of its
neighbours. We define $N[u] = N(u)\cup\Setx u$ and call this set the
\emph{closed neighbourhood} of $u$.


\section{An algorithm}
\label{sec:algorithm}

Let $G$ be a simple cubic bridgeless graph. By a well-known theorem of
Petersen (see, e.g., \cite[Corollary~2.2.2]{bib:Die-graph}), $G$ has a
2-factor. It will be helpful in our proof to pick a 2-factor with
special properties, namely one satisfying the condition in the
following result of Kaiser and \v{S}krekovski~\cite[Corollary
4.5]{bib:KS-cycles}:
\begin{theorem}[\cite{bib:KS-cycles}]\label{t:ks}
  Every cubic bridgeless graph contains a 2-factor whose edge set
  intersects each inclusionwise minimal edge-cut in $G$ of size 3 or
  4.
\end{theorem}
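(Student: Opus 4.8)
The plan is to work with the perfect matching complementary to a $2$-factor and to reduce small edge-cuts inductively. By Petersen's theorem $G$ has a $2$-factor $F$; writing $M=E(G)\sm F$ for the complementary perfect matching, $F$ meets an edge-cut $C$ if and only if $C\not\subseteq M$. So it suffices to produce a perfect matching $M$ of $G$ such that $C\not\subseteq M$ for every inclusionwise-minimal edge-cut $C$ with $\size C\in\Setx{3,4}$; recall that such a $C$ has the form $\bd X$ with both $G[X]$ and $G[V(G)\sm X]$ connected. Cuts of size $3$ cost nothing: if $\size{\bd X}=3$ then $3\size X=2\size{E(G[X])}+3$ is odd, so $\size X$ is odd, and hence $\size{\bd X\cap M}$ is odd --- in particular nonzero --- for every perfect matching $M$. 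Moreover, since $G$ is cubic and bridgeless, the all-$\tfrac13$ vector lies in its perfect matching polytope (the only nontrivial facets require $\size{\bd X}/3\ge1$ for $\size X$ odd, and odd cuts of a bridgeless cubic graph have at least three edges). Writing this vector as $\sum_i\lambda_i\mathbf 1_{M_i}$ over perfect matchings $M_i$ and summing over the three edges of a $3$-cut $C$ gives $\sum_i\lambda_i\size{C\cap M_i}=1$ with each summand $\size{C\cap M_i}\ge1$, forcing $\size{C\cap M_i}=1$ whenever $\lambda_i>0$. Thus every perfect matching occurring in such a decomposition already meets all $3$-cuts, and only the minimal $4$-cuts need attention. (Note also that a minimal $4$-cut of the form $\bd{\Setx{u,v}}$ with $uv\in E(G)$ contains the two edges at $u$ other than $uv$, so it can never lie inside a matching and is met automatically; these are the only minimal $4$-cuts having a side of size $2$.)

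\textbf{The highly connected case.} If $G$ is cyclically $5$-edge-connected, any $2$-factor works. Indeed, since $\size{V(G)}$ is even, both sides of a minimal $3$-cut have odd order; if both had order at least $3$ then each side would induce a subgraph with strictly more edges than vertices minus one, hence a cycle, contradicting cyclic $5$-edge-connectivity, so one side is a single vertex and the cut is a vertex-star, met by every $2$-factor. Likewise both sides of a minimal $4$-cut have even order, and the same counting forces one side to have order $2$, i.e.\ the cut is $\bd{\Setx{u,v}}$ with $uv\in E(G)$, met by the previous paragraph. So here a $2$-factor of $G$, which exists by Petersen's theorem, is as required.

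\textbf{The reduction.} Otherwise $G$ has an edge-cut $\bd S$ with $\size{\bd S}\le4$ such that both $G[S]$ and $G[V(G)\sm S]$ contain a cycle; pick one with $\size{\bd S}$ smallest and then $\size S$ smallest. Contracting $V(G)\sm S$ to a single vertex yields a smaller graph $G_1$ containing $G[S]$, and contracting $S$ yields a smaller graph $G_2$; when $\size{\bd S}=2$ we first suppress the resulting degree-$2$ vertex, and when $\size{\bd S}=4$ we split the resulting degree-$4$ vertex into two adjacent cubic vertices according to one of the three ways of pairing the four cut edges. In each case $G_1$ and $G_2$ are cubic and bridgeless (bridgelessness of the splitting edge uses that $G$ has no nontrivial edge-cut of size below $\size{\bd S}$ with cycles on both sides). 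One then checks, using submodularity of the cut function and the minimality of $\bd S$, that every minimal edge-cut of $G$ of size $3$ or $4$ is ``located'' on one side, hence descends to a minimal such cut of $G_1$ or of $G_2$ (or is $\bd S$ itself, handled directly by the first paragraph since $\size{\bd S}\le4$). Applying the inductive hypothesis to $G_1$ and $G_2$ produces valid perfect matchings $M_1,M_2$; gluing them along $\bd S$ yields a perfect matching $M$ of $G$ with $M\cap E(G_i)=M_i$, so validity transfers.

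\textbf{The main obstacle.} The gluing succeeds only if $M_1$ and $M_2$ induce the same subset of $\bd S$, which a priori they need not: at a $3$-cut each uses exactly one cut edge, possibly different ones, and at a $2$- or $4$-cut the parities match but the realised patterns may clash. To force agreement I would carry a stronger statement through the induction: \emph{for every cubic bridgeless graph and every edge $e$, there is a valid perfect matching containing $e$ and a valid perfect matching avoiding $e$}. This still has an easy base case --- when the graph is cyclically $5$-edge-connected ``valid'' is automatic, and both an edge's membership in some perfect matching and the existence of a perfect matching avoiding it follow from the all-$\tfrac13$ point of the perfect matching polytope --- and it lets one read off the cut pattern forced on one piece by the prescribed edge and then demand that the other piece realise the matching pattern, by forcing or forbidding the appropriate cut edges (a single prescribed edge per piece suffices to close the recursion). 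The technical heart is therefore the interplay of this strengthening with the verification that gluing creates no new bad minimal $3$- or $4$-cut, together with the finite case analysis of how a matching can meet a $4$-cut and of the three cut-edge pairings; the remainder is routine bookkeeping over the cut sizes $2,3,4$.
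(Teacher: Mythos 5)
The paper does not prove this statement: Theorem~\ref{t:ks} is quoted directly from Kaiser and \v{S}krekovski~\cite{bib:KS-cycles} (their Corollary~4.5), so there is no proof in the text to compare your sketch against. Assessed on its own merits, your sketch correctly disposes of the cyclically $5$-edge-connected base case and of all size-$3$ cuts (the parity observation combined with the all-$\tfrac13$ point of the perfect-matching polytope is sound), but the gluing step for size-$4$ cuts has a genuine gap, and the strengthening you propose does not close it. After contracting one side of a $4$-cut $\bd S=\Setx{e_1,e_2,e_3,e_4}$ and splitting the resulting degree-$4$ vertex into $z_1z_2=f$, a perfect matching of the piece either uses $f$ (cut pattern $\emptyset$) or uses one $e_i$ at each of $z_1,z_2$ (cut pattern of size~$2$). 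Suppose $M_1$ realises the pattern $\Setx{e_1,e_3}$, and in $G_2$ you split so that $e_1,e_2$ sit at $z_1$ and $e_3,e_4$ at $z_2$. Prescribing $e_1\in M_2$ covers $z_1$ and forces $f\notin M_2$, but leaves the choice between $e_3$ and $e_4$ at $z_2$ entirely uncontrolled; prescribing the absence of an edge, or choosing a different pairing, merely relocates the ambiguity. A single prescribed edge therefore cannot force agreement across a $4$-cut --- one must control behaviour at both ends of $f$, which is a genuinely stronger inductive hypothesis than ``one valid matching through $e$ and one avoiding $e$''.

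Two further points deserve flagging. Your handling of a cycle-separating $2$-edge-cut $\bd S=\Setx{e_1,e_2}$ by suppressing the degree-$2$ contraction vertex does not interact correctly with matchings: a perfect matching of the suppressed graph that uses the new edge does not lift to a matching of the contracted graph covering the suppressed vertex (it would need both $e_1$ and $e_2$, which share that vertex), so the claimed correspondence between $M_1$ and matchings of $G_1$ breaks down at $2$-cuts. And the assertion that every minimal cut of size $3$ or $4$ in $G$ ``descends'' to $G_1$ or $G_2$ is the conclusion of an uncrossing/submodularity argument that you gesture at but do not carry out; cuts that genuinely cross $\bd S$ are precisely where such reductions fail without careful choices. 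These issues are fixable in principle, but they constitute the substance of the theorem rather than, as you put it, routine bookkeeping.
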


Among all 2-factors of $G$ satisfying the condition of
Theorem~\ref{t:ks}, choose a 2-factor $F$ with as many components as
possible. The following lemma will be used to rule out some of the
cases in the analysis found in Section~\ref{sec:chord}:

\begin{lemma}\label{l:split-cycle}
  Let $C$ be a cycle of $F$. If there exist vertex-disjoint cycles $D_1$
  and $D_2$ such that $V(C) = V(D_1)\cup V(D_2)$, then the following
  hold:
  \begin{enumerate}[\quad (i)]
  \item $2\leq \size{E(D_1,D_2)}\leq 4$,
  \item if the length of $D_1$ or $D_2$ equals 5, then
    $\size{E(D_1,D_2)}\leq 3$.
  \end{enumerate}
\end{lemma}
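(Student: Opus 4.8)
The plan is to exploit the extremal choice of the 2-factor $F$: it satisfies the condition of Theorem~\ref{t:ks} and, among all such 2-factors, has the maximum number of components. Suppose $C$ is a cycle of $F$ and $D_1, D_2$ are vertex-disjoint cycles with $V(C) = V(D_1)\cup V(D_2)$. Since $C$ is a cycle, the cyclic order of its vertices alternates between arcs that lie on $D_1$ or $D_2$ and the edges of $E(D_1,D_2)$; in particular $\size{E(D_1,D_2)}$ is even and at least $2$ (it cannot be $0$ since then $D_1$ and $D_2$ would not exhaust the vertices of a single cycle). So for (i) the only thing to rule out is $\size{E(D_1,D_2)}\geq 6$, and for (ii) the case $\size{E(D_1,D_2)}\in\{4,6,\dots\}$ when one of $D_i$ has length~$5$; thus both parts reduce to showing $\size{E(D_1,D_2)}$ cannot be too large.

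First I would consider $F' = (F \setminus E(C)) \cup E(D_1)\cup E(D_2)$. This is again a 2-factor of $G$ (we only changed edges inside $V(C)$, keeping every vertex of degree~$2$), and it has strictly more components than $F$, since $C$ splits into $D_1$ and $D_2$ while all other cycles are untouched. By the extremal choice of $F$, the only way $F'$ can fail to beat $F$ is if $F'$ violates the condition of Theorem~\ref{t:ks}: there must be an inclusionwise minimal edge-cut $R$ of $G$ with $\size R \in \{3,4\}$ such that $E(F')\cap R = \emptyset$. Since $F$ satisfies the condition, $E(F)\cap R \neq \emptyset$, so $R$ must contain an edge of $E(C)$ that was deleted, i.e.\ $R \cap E(C) \neq \emptyset$ and $R$ avoids $E(D_1)\cup E(D_2)$. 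The edges of $E(C)$ not in $E(D_1)\cup E(D_2)$ are exactly the $\size{E(D_1,D_2)}$ edges of $E(D_1,D_2)$, so $R$ contains at least one, hence at least two, edges joining $V(D_1)$ to $V(D_2)$ — and $R$ contains no edge inside $D_1$ or inside $D_2$.

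The heart of the argument is then a counting/parity analysis of such a cut $R$. Writing $A$ for the side of the cut $R$, I would look at how $A$ meets $V(D_1)$ and $V(D_2)$: because $R$ contains none of the cycle edges of $D_1$, the cycle $D_1$ lies entirely on one side of $R$ or the other (a cycle disjoint from a cut is monochromatic), and likewise for $D_2$; since $R$ does cross between $V(D_1)$ and $V(D_2)$, the cycles $D_1$ and $D_2$ sit on opposite sides of $R$. Consequently $R$ \emph{equals} $E(D_1,D_2)$ together with possibly some edges of $\bd C$ (edges leaving $V(C)$ altogether), and in fact minimality of $R$ forces $R = E(D_1, D_2)$ or a sub-structure controlled by $\bd C$. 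Now $\size R \le 4$ gives $\size{E(D_1,D_2)} \le 4$, which is (i). For (ii), if say $\size{D_1} = 5$: a cycle of length $5$ has $5$ edges leaving it in the cubic graph restricted appropriately, and if $\size{E(D_1,D_2)} = 4$ then at most one of those $5$ incidences goes outside $V(C)$; combined with the $3$- or $4$-edge-cut structure and parity ($5$ is odd, so the number of edges from $D_1$ to each side has a forced parity), one derives that the minimal cut separating $D_1$ would have size $\le 2$, contradicting bridgelessness, or that $F'$ after a further local switch beats $F$ — so $\size{E(D_1,D_2)}\le 3$.

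The main obstacle I anticipate is making the last step fully rigorous: one has to be careful that the cut $R$ witnessing the failure of Theorem~\ref{t:ks} for $F'$ is genuinely forced to be (essentially) $E(D_1,D_2)$ rather than some larger minimal cut that merely happens to contain one edge of $E(D_1,D_2)$ and otherwise runs through $\bd C$; handling the interaction with $\bd C$, and the parity bookkeeping for the length-$5$ case, is where the real work lies. A secondary subtlety is verifying that $F'$ really is admissible as a 2-factor (degrees, no accidental issues) and that "more components" is strict — but those are routine.
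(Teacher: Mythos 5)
Your part (i) is essentially the paper's own argument and is correct: replace $C$ by $D_1\cup D_2$ to obtain a 2-factor $F'$ with more components; by the extremal choice of $F$, some inclusionwise minimal edge-cut $R$ with $\size{R}\in\{3,4\}$ avoids $E(F')$; since $R$ meets $E(F)$ it contains an edge of $C$ joining $D_1$ to $D_2$; as $R$ avoids $E(D_1)\cup E(D_2)$, each $D_i$ lies on one side of $R$, so $D_1$ and $D_2$ lie on opposite sides and $E(D_1,D_2)\subseteq R$, whence $\size{E(D_1,D_2)}\le\size{R}\le 4$. Two small corrections: $\size{E(D_1,D_2)}$ need not be even, because chords of $C$ may join $V(D_1)$ to $V(D_2)$ (only the $C$-edges crossing come in even number) --- harmless here, since you do not really use parity; and the ``obstacle'' you worry about (that $R$ might contain just one edge of $E(D_1,D_2)$ plus edges of $\bd{C}$) dissolves by the sentence above: containing one crossing edge already forces containing all of $E(D_1,D_2)$.

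Part (ii), however, has a genuine gap. The easy half you do have: if $\size{E(D_1,D_2)}=4$, then $E(D_1,D_2)\subseteq R$ and $\size{R}\le 4$ force $R=E(D_1,D_2)$, so $E(D_1,D_2)$ is an edge-cut of $G$. What is missing is the contradiction itself. The correct step (the paper's) is: since $G$ is triangle-free, the 5-cycle $D_1$ is chordless, so exactly five edges leave $V(D_1)$; four go to $V(D_2)$, and the fifth edge $e$ goes to a vertex of the component $K_1$ of $G-E(D_1,D_2)$ containing $D_1$, outside $V(D_1)$. Every edge between $K_1$ and the other component lies in $E(D_1,D_2)$ and so has its $K_1$-endvertex in $V(D_1)$; hence $K_1\setminus V(D_1)$ is attached to the rest of $G$ only through $e$, so $e$ is a bridge, contradicting bridgelessness. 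Your proposed contradiction --- ``the minimal cut separating $D_1$ would have size $\le 2$, contradicting bridgelessness'' --- does not work as stated: bridgeless cubic graphs can have 2-edge-cuts, so only a cut of size 1 yields a contradiction, and your parity bookkeeping does not produce one; the fallback ``a further local switch beats $F$'' is unspecified. So part (ii) needs the bridge argument (or an equivalent), which is precisely where the paper's proof does its work.
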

\begin{proof}
  Let $d = \size{E(D_1,D_2)}$. We prove (i). Clearly, $d\geq 2$ since
  at least two edges of $C$ join $D_1$ to $D_2$. Suppose that $d\geq
  5$. We claim that the 2-factor $F'$ obtained from $F$ by replacing
  $C$ with $D_1$ and $D_2$ satisfies the condition of
  Theorem~\ref{t:ks}. If not, then there is an inclusionwise minimal
  edge-cut $Y$ of $G$ of size 3 or 4 disjoint from $E(F')$. Since $Y$
  intersects $E(F)$, it must separate $D_1$ from $D_2$ and hence
  contain $E(D_1,D_2)$. But then $\size Y \geq 5$, a contradiction
  which shows that $F'$ satisfies the condition of
  Theorem~\ref{t:ks}. Having more components than $F$, it contradicts
  the choice of $F$. Thus, $d\leq 4$.

  (ii) Assume that $d = 4$ and that the length of, say, $D_1$ equals
  5. Let $F'$ be defined as in part (i). By the same argument,
  $E(D_1,D_2)$ is the unique inclusionwise minimal edge-cut of $G$
  disjoint from $E(F')$. Let $K_1$ be the component of $G-E(D_1,D_2)$
  containing $D_1$. Since $\bd{D_1}$ contains exactly one edge of
  $K_1$, this edge is a bridge in $G$, contradicting the assumption
  that $G$ is bridgeless.
\end{proof}

We fix some more notation used throughout the paper. Let $M$ be the
perfect matching complementary to $F$. If $u \in V(G)$, then $u'$
denotes the opposite endvertex of the edge of $M$ containing $u$. We
call $u'$ the \emph{mate} of $u$. We fix a reference orientation of
each cycle of $F$, and let $u_{+k}$ (where $k$ is a positive integer)
denote the vertex reached from $u$ by following $k$ consecutive edges
of $F$ in accordance with the fixed orientation. The symbol $u_{-k}$
is defined symmetrically. We write $u_+$ and $u_-$ for $u_{+1}$ and
$u_{-1}$. These vertices are referred to as the \emph{$F$-neighbours}
of $u$. 

We now describe \textbf{Algorithm 1}, an algorithm to construct a
random independent set $I$ in $G$. We will make use of a random
operation, which we define next. An independent set is said to be
\emph{maximum} if no other independent set has larger
cardinality. Given a set $X \subseteq V(G)$, we define $\Phi(X)
\subseteq X$ as follows:
\begin{enumerate}[\quad(a)]
\item if $F[X]$ is a path, then $\Phi(X)$ is either a maximum
  independent set of $F[X]$ or its complement in $X$, each with
  probability $1/2$,
\item if $F[X]$ is a cycle, then $\Phi(X)$ is a maximum independent
  set in $F[X]$, chosen uniformly at random,
\item if $F[X]$ is disconnected, then $\Phi(X)$ is the union of the
  sets $\Phi(X \cap V(K))$, where $K$ ranges over all components of
  $F[X]$.
\end{enumerate}

In \textbf{Phase 1} of the algorithm, we choose an orientation
$\vec\sigma$ of $M$ by directing each edge of $M$ independently at
random, choosing each direction with probability $1/2$. A vertex $u$
is \emph{active} (with respect to $\vec\sigma$) if $u$ is a head of
$\vec\sigma$, otherwise it is \emph{inactive}.

An \emph{active run} of $\vec\sigma$ is a maximal set $R$ of vertices such
that the induced subgraph $F[R]$ is connected and each vertex in $R$
is active. Thus, $F[R]$ is either a path or a cycle. We let
\begin{equation*}
  \sigma^1 = \bigcup_R \Phi(R),
\end{equation*}
where $R$ ranges over all active runs of $\vec\sigma$. The independent
set $I$ (which will be modified by subsequent phases of the algorithm
and eventually become its output) is defined as $\sigma^1$. The
vertices of $\sigma^1$ are referred to as those \emph{added in Phase
  1}. This terminology will be used for the later phases as well.

In \textbf{Phase 2}, we add to $I$ all the active vertices $u$ such
that each neighbour of $u$ is inactive. Observe that if an active run
consists of a single vertex $u$, then $u$ will be added to $I$ either
in Phase 1 or in Phase 2.

In \textbf{Phase 3}, we consider the set of all vertices of $G$ which
are not contained in $I$ and have no neighbour in $I$. We call such
vertices \emph{feasible}. Note that each feasible vertex must be
inactive. A \emph{feasible run} $R$ is defined analogously to an
active run, except that each vertex in $R$ is required to be feasible.

We define $\sigma^3 = \bigcup_R \Phi(R)$, where $R$ now ranges over
all feasible runs. All of the vertices of $\sigma^3$ are added to
$I$. 

In \textbf{Phase 4}, we add to $I$ all the feasible vertices with no
feasible neighbours. As with Phase 2, a vertex which forms a feasible
run by itself is certain to be added to $I$ either in Phase 3 or in
Phase 4.

When referring to the random independent set $I$ in
Sections~\ref{sec:events}--\ref{sec:chord}, we mean the set output
from Phase 4 of Algorithm 1. It will, however, turn out that this set
needs to be further adjusted in certain special situations. This
augmentation step will be performed in Phase 5, whose discussion we
defer to Section~\ref{sec:phase5}.

We represent the random choices made during an execution of
Algorithm~1 by the triple $\sigma = (\vec\sigma,\sigma^1,\sigma^3)$
which we call a \emph{situation}. Thus, the set $\Omega$ of all
situations is the sample space in our probabilistic scenario. As usual
for finite probabilistic spaces, an \emph{event} is any subset of
$\Omega$.

Note that if we know the situation $\sigma$ associated with a
particular run of Algorithm 1, we can determine the resulting
independent set $I = I(\sigma)$. We will say that an event $\Gamma
\subseteq \Omega$ \emph{forces} a vertex $u\in V(G)$ if $u$ is
included in $I(\sigma)$ for any situation $\sigma\in\Gamma$.


\section{Templates and diagrams}
\label{sec:events}

Throughout this and the subsequent sections, let $u$ be a fixed vertex
of $G$, and let $v = u'$. Furthermore, let $Z$ be the cycle of $F$
containing $u$. All cycles of $F$ are taken to have a preferred
orientation, which enables us to use notation such as $uZv$ for
subpaths of these cycles. 

We will analyze the probability of the event $u\in I(\sigma)$, where
$\sigma$ is a random situation produced by Algorithm 1. To this end,
we classify situations based on what they look like in the vicinity of
$u$.

A \emph{template} in $G$ is a 5-tuple $\Delta =
(\vec\Delta,\Delta^1,\Delta^{\bar 1},\Delta^3,\Delta^{\bar 3})$,
where:
\begin{itemize}
\item $\vec\Delta$ is an orientation of a subgraph of $M$,
\item $\Delta^1$ and $\Delta^{\bar 1}$ are disjoint sets of heads of
  $\vec\Delta$,
\item $\Delta^3$ and $\Delta^{\bar 3}$ are disjoint sets of tails of
  $\vec\Delta$.
\end{itemize}
We set $\Delta^* = \Delta^1\cup\Delta^{\bar
  1}\cup\Delta^3\cup\Delta^{\bar 3}$. The \emph{weight} of $\Delta$,
denoted by $w(\Delta)$, is defined as 
\begin{equation*}
  w(\Delta) = \size{E(\vec\Delta)} + \size{\Delta^*}.
\end{equation*}

A situation $\sigma = (\vec\sigma,\sigma^1,\sigma^3)$ \emph{weakly
  conforms to $\Delta$} if the following hold:
\begin{itemize}
\item $\vec\Delta\subseteq\vec\sigma$,
\item $\Delta^1 \subseteq \sigma^1$ and
\item $\Delta^{\bar 1} \cap \sigma^1 = \emptyset$.
\end{itemize}
If, in addition,
\begin{itemize}
\item $\Delta^3\subseteq\sigma^3$ and $\Delta^{\bar 3}\cap\sigma^3 =
  \emptyset$,
\end{itemize}
then we say that $\sigma$ \emph{conforms to $\Delta$}. The \emph{event
  defined by $\Delta$}, denoted by $\Gamma(\Delta)$, consists of all
situations conforming to $\Delta$.

By the above definition, we can think of $\Delta^1$ and $\Delta^{\bar
  1}$ as specifying which vertices must or must not be added to $I$ in
Phase 1. However, note that a vertex $u$ in an active run of length 1
will be added to $I$ in Phase 2 even if $u\in\Delta^{\bar
  1}$. Similarly, $\Delta^3$ and $\Delta^{\bar 3}$ specify which
vertices will or will not be added to $I$ in Phase 3, with an
analogous provision for feasible runs of length one.

To facilitate the discussion, we represent templates by pictorial
\emph{diagrams}. These usually show only the neighbourhood of the
distinguished vertex $u$, and the following conventions apply for a
diagram representing a template $\Delta$:
\begin{itemize}
\item the vertex $u$ is circled, solid and dotted lines represent
  edges and non-edges of $G$, respectively, dashed lines represent
  subpaths of $F$ (see Figure~\ref{fig:deficient}),
\item cycles and subpaths of $F$ are shown as circles and horizontal
  paths, respectively, and the edge $uv$ is vertical,
\item $u_-$ is shown to the left of $u$, while $v_-$ is shown to the
  right of $v$ (see Figure~\ref{fig:uv}),
\item the arcs of $\vec\Delta$ are shown with arrows,
\item the vertices in $\Delta^1$ ($\Delta^{\bar 1}$, $\Delta^3$,
  $\Delta^{\bar 3}$, respectively) are shown with a star (crossed
  star, triangle, crossed triangle, respectively),
\item only one endvertex of an arc may be shown (so an edge of $G$ may
  actually be represented by one or two arcs of the diagram), but the
  other endvertex may still be assigned one of the above symbols.
\end{itemize}
An arc with only one endvertex in a diagram is called an
\emph{outgoing} or an \emph{incoming} arc, depending on its
direction. A diagram is \emph{valid} in a graph $G$ if all of its
edges are present in $G$, and each edge of $G$ is given at most one
orientation in the diagram. Thus, a diagram is valid in $G$ if and
only if it determines a template in $G$. An event defined by a diagram
is \emph{valid} in $G$ if the diagram is valid in $G$.

\begin{figure}
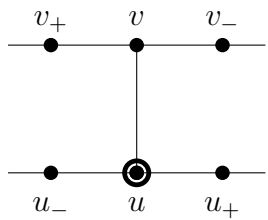

  \centering
  \fig{32}
  \caption{The location of neighbours of $u$ and $v$.}
  \label{fig:uv}
\end{figure}

A sample diagram is shown in Figure~\ref{fig:sample}. The
corresponding event (more precisely, the event given by the
corresponding template) consists of all situations
$(\vec\sigma,\sigma^1,\sigma^3)$ such that $v,v_+,u_{-2}$ and $\mate{u}{+}$
are heads of $\vec\sigma$, $\sigma^1$ includes $v_+$ and $u_{-2}$ but
does not include $\mate{u}{+}$, and $\sigma^3$ includes $u$.

\begin{figure}
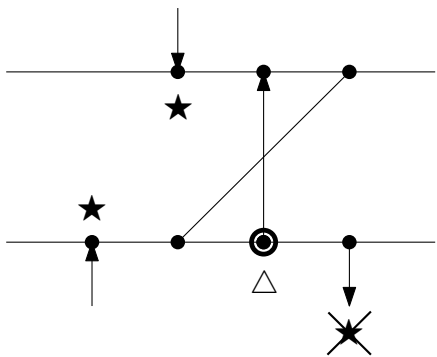

  \centering
  \fig1
  \caption{A diagram.}
  \label{fig:sample}
\end{figure}

Let us call a template $\Delta$ \emph{admissible} if
$\Delta^3\cup\Delta^{\bar 3}$ is either empty or contains only $u$,
and in the latter case, $u$ is feasible in any situation weakly
conforming to $\Delta$. All the templates we consider in this paper
will be admissible. Therefore, we state the subsequent definitions and
results in a form restricted to this case.

We will need to estimate the probability of an event defined by a
given template. If it were not for the sets $\Delta^1$, $\Delta^{\bar
  3}$ etc., this would be simple as the orientations of distinct edges
represent independent events. However, the events, say,
$u_1\in\Delta^1$ and $u_2\in\Delta^1$ (where $u_1$ and $u_2$ are
vertices) are in general not independent, and the amount of their
dependence is influenced by the orientations of certain edges of
$F$. To keep the dependence under control, we introduce the following
concept.

A \emph{sensitive pair} of a template $\Delta$ is an ordered pair
$(x,y)$ of vertices in $\Delta^1 \cup \Delta^{\bar 1}$, such that $x$
and $y$ are contained in the same cycle $W$ of $F$, the path $xWy$ has
no internal vertex in $\Delta^*$ and one of the following conditions
holds:
\begin{enumerate}[\quad(a)]
\item $x,y\in\Delta^1$ or $x,y\in\Delta^{\bar 1}$, $x\neq y$, the path
  $xWy$ has odd length and contains no tail of $\vec\Delta$,
\item $x\in\Delta^1$ and $y\in\Delta^{\bar 1}$ or vice versa, the path
  $xWy$ has even length and contains no tail of $\vec\Delta$,
\item $x=y\in\Delta^1$, $W$ is odd and contains no tail of
  $\vec\Delta$.
\end{enumerate}
Sensitive pairs of the form $(x,x)$ are referred to as
\emph{circular}, the other ones are \emph{linear}.

A sensitive pair $(x,y)$ is \emph{$k$-free} (where $k$ is a positive
integer) if $xWy$ contains at least $k$ vertices which are not heads
of $\vec\Delta$. Furthermore, any pair of vertices which is not
sensitive is considered $k$-free for any integer $k$. 

We define a number $q(\Delta)$ in the following way: If $u\in\Delta^3$
and $Z$ is an odd cycle, then $q(\Delta)$ is the probability that all
vertices of $Z$ are feasible with respect to a random situation from
$\Gamma(\Delta)$; otherwise, $q(\Delta)$ is defined as 0.

\begin{observation}\label{obs:q}
  Let $\Delta$ be a template in $G$. Then:
  \begin{enumerate}[\quad(i)]
  \item $q(\Delta) = 0$ if $u\notin\Delta^3$ or $Z$ contains a head of
    $\vec\Delta$ or $Z$ is even,
  \item $q(\Delta) \leq 1/2^t$ if $Z$ contains at least $t$ vertices
    which are not tails of $\vec\Delta$.
  \end{enumerate}
\end{observation}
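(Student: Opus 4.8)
The plan is to treat statements (i) and (ii) separately. Statement (i) is a matter of unwinding definitions. If $u\notin\Delta^3$ or $Z$ is even then $q(\Delta)=0$ by the very definition of $q$, so suppose $u\in\Delta^3$, that $Z$ is odd, and that $Z$ contains a head $x$ of $\vec\Delta$. Every situation $\sigma\in\Gamma(\Delta)$ satisfies $\vec\Delta\subseteq\vec\sigma$, so $x$ is a head of $\vec\sigma$ and hence active; since a feasible vertex must be inactive, $x$ is feasible in no $\sigma\in\Gamma(\Delta)$, so the event ``all vertices of $Z$ are feasible'' is disjoint from $\Gamma(\Delta)$ and $q(\Delta)=0$.

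For (ii) we may assume $q(\Delta)\neq0$, so by (i) $u\in\Delta^3$, $Z$ is odd, and $Z$ has no head of $\vec\Delta$; we may also assume $Z$ has no chord in $M$, since otherwise not all vertices of $Z$ could be inactive and $q(\Delta)=0$. Thus every vertex $w$ of $Z$ has its mate $w'$ outside $Z$. Fix $t$ vertices $w_1,\dots,w_t$ of $Z$ that are not tails of $\vec\Delta$; since they are neither tails nor heads of $\vec\Delta$, the matching edge $e_i$ at $w_i$ lies outside $\vec\Delta$. If all vertices of $Z$ are feasible, then all of them are inactive, which says precisely that every $e_i$ is oriented away from $Z$. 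Hence $q(\Delta)\le\cprob{B}{\Gamma(\Delta)}$, where $B$ is the event that $e_1,\dots,e_t$ all point away from $Z$, and it suffices to prove $\cprob{B}{\Gamma(\Delta)}\le 1/2^t$.

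Because $e_1,\dots,e_t\notin\vec\Delta$, conditioning on $\vec\Delta\subseteq\vec\sigma$ leaves their orientations independent and uniform, so if $\Gamma(\Delta)$ were independent of them we would get $\cprob{B}{\Gamma(\Delta)}=1/2^t$. It need not be: the conditions $\Delta^1\subseteq\sigma^1$ and $\Delta^{\bar 1}\cap\sigma^1=\emptyset$ depend on the orientation through the active runs, and reorienting $e_i$ toward $Z$ can alter the active run containing the mate $w_i'$ of $w_i$, which may lie on a cycle meeting $\Delta^*$. To handle this I would expose first $\vec\sigma$ and then, run by run, the sets $\Phi(R)$. On a path run the probability that a given vertex is chosen by $\Phi$ equals $1/2$, by the symmetrisation in the definition of $\Phi$, so on such runs the orientation of $e_i$ is immaterial; the only way reorienting $e_i$ can help a conformance condition is by completing an \emph{odd cycle} through $w_i'$ into a fully active run, and then the requirement that $w_i'$ stay out of $I$ (needed for $w_i$ to be feasible) imposes an additional constraint on $\Phi$ of that cycle which offsets the gain. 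Working this out shows each $e_i$ contributes a factor at most $1/2$, giving $\cprob{B}{\Gamma(\Delta)}\le 1/2^t$. The main obstacle is exactly this last step — organising the run-by-run conditional estimates so that the $t$ matching edges end up contributing independent (or at worst negatively correlated) halves once the downstream $\Phi$-choices are accounted for; I expect the notions of sensitive pair and $k$-free pair to be what makes this bookkeeping go through.
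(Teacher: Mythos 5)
Your part (i) is correct and is exactly the intended unwinding of the definitions; note that the paper states the whole Observation without any proof, treating both parts as immediate. Your reduction in (ii) is also the right first half and matches what the paper implicitly relies on: once $q(\Delta)\neq 0$ one may assume $u\in\Delta^3$, $Z$ odd, no head of $\vec\Delta$ on $Z$, and no edge of $M$ with both ends on $Z$, and then ``all of $Z$ feasible'' forces the $t$ distinct matching edges $e_1,\dots,e_t$ at the chosen vertices, none of which lies in $\vec\Delta$, to be oriented away from $Z$ --- $t$ coin flips of probability $1/2$ each.

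The gap is the final step, and you flag it yourself: the claim that, after conditioning on $\Gamma(\Delta)$, each $e_i$ still contributes a factor at most $1/2$ is asserted (``working this out shows\dots'') rather than proved, and it is the entire nontrivial content of (ii) beyond the easy reduction. The dependence you worry about is real: if some $x\in\Delta^{\bar 1}$ lies on an odd cycle $W$ of $F$ passing through the mate $w_i'$ of $w_i$, then conditioning on $x\notin\sigma^1$ genuinely biases the orientation of $e_i$ toward ``away from $Z$'' (the event $x\notin\Phi(W)$ is more likely when all of $W$ is active, which requires $w_i'$ to be active, i.e.\ $w_i$ to be a tail), so one cannot appeal to exact independence. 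The compensation you invoke --- that feasibility of $w_i$ additionally requires $w_i'\notin I$, which is harder to satisfy on a fully active odd cycle --- does point in the right direction, but it must actually be computed (e.g.\ by comparing the joint probability of the relevant $\Phi$-constraints on an odd active cycle with the corresponding product bound), and your sketch does not do this. Moreover, the sensitive-pair/$k$-free machinery will not supply the missing bookkeeping: those notions control correlations among the prescribed vertices of $\Delta^1\cup\Delta^{\bar 1}$ inside the weight estimate of Lemma~\ref{l:dep}, and say nothing about the event that every vertex of $Z$ is feasible. So as written, (i) stands, while (ii) is an outline whose decisive step is missing.
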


The following lemma is a basic tool for estimating the probability of
an event given by a template.

\begin{lemma}\label{l:dep}
  Let $G$ be a graph and $\Delta$ an admissible template in $G$ such
  that:
  \begin{enumerate}[\quad(i)]
  \item $\Delta$ has $\ell$ linear sensitive pairs, the $i$-th of which
    is $x_i$-free ($i=1,\dots,\ell$), and
  \item $\Delta$ has $c$ circular sensitive pairs, the $i$-th of which
    is $y_i$-free ($i=1,\dots,c$).
  \end{enumerate}
  Then
  \begin{equation*}
    \prob{\Gamma(\Delta)} \geq \Bigl(1 - \sum_{i=1}^\ell
    \frac1{2^{x_i}} - \sum_{i=1}^c \frac1{5 \cdot 2^{y_i}} -
    \frac{q(\Delta)}5\Bigr) 
    \cdot \frac1{2^{w(\Delta)}}.
  \end{equation*}
\end{lemma}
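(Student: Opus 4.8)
The plan is to bound $\prob{\Gamma(\Delta)}$ from below by first conditioning on the orientation $\vec\sigma$ of $M$ restricted to the arcs of $\vec\Delta$, and then, within that conditioning, estimating the probability that the remaining Phase~1 and Phase~3 requirements are met. The event $\vec\Delta\subseteq\vec\sigma$ has probability exactly $1/2^{\size{E(\vec\Delta)}}$, since distinct edges of $M$ are oriented independently. Conditioned on this, we must control the event that $\Delta^1\subseteq\sigma^1$, $\Delta^{\bar 1}\cap\sigma^1=\emptyset$, $\Delta^3\subseteq\sigma^3$, $\Delta^{\bar 3}\cap\sigma^3=\emptyset$. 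Because $\Delta$ is admissible, the Phase~3 constraints involve at most the single vertex $u$, and the factor $q(\Delta)/5$ is precisely what is needed to account for the possibility (when $u\in\Delta^3$ and $Z$ is odd) that the feasible run containing $u$ is the whole cycle $Z$, in which case $\Phi$ selects a maximum independent set of the odd cycle $F[Z]$ uniformly and $u$ lies in it with probability only $(\size Z-1)/(2\cdot\size{Z})\ge 2/5$, costing an extra $1/5$ in the worst case.

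The core of the argument is the Phase~1 analysis. For each vertex $x\in\Delta^1\cup\Delta^{\bar 1}$, whether $x$ is active depends only on the orientation of the matching edge at $x$; if $x$ is not already an endpoint of an arc in $\vec\Delta$, this is an independent fair coin, contributing a factor $1/2$ to the weight via $\size{\Delta^*}$. Given that all the relevant vertices are active (resp.\ inactive as dictated), the question of whether $x\in\sigma^1$ is determined by the application of $\Phi$ to the active run $R\ni x$, which in turn depends on the orientations of the matching edges at the vertices of $R$ — i.e.\ on where the active run "breaks". The key observation is that for two vertices $x,y$ in $\Delta^1\cup\Delta^{\bar 1}$, the events concerning $x$ and $y$ are independent unless $x$ and $y$ can lie in a common active run with no element of $\Delta^*$ strictly between them on the cycle — and this is exactly the definition of a sensitive pair. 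For a non-sensitive pair the relevant coin flips are disjoint, so independence holds. I would make this precise by setting up the conditional probability space on the coin flips at vertices of $Z\setminus(\text{heads of }\vec\Delta)$ and showing that the "bad" events (where $\Phi$ forces the wrong membership for a pair) have small probability: a union bound over the $\ell$ linear and $c$ circular sensitive pairs gives the subtracted terms $\sum 1/2^{x_i}$ and $\sum 1/(5\cdot 2^{y_i})$. The $1/2^{x_i}$ for a linear sensitive pair comes from the fact that the path $xWy$ must be an active run with no internal break for the correlation to bite, and being $x_i$-free means at least $x_i$ of its vertices are free coins that must all come up "active"; the extra $1/5$ saving in the circular case again reflects that a circular run is a whole odd cycle of $F$ where $\Phi$ picks one of $\size W$ maximum independent sets uniformly, so the probability that $\Phi$ picks "wrongly" for the circular pair $(x,x)$ is at most $1/\size W\le 1/5$.

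I expect the main obstacle to be the careful bookkeeping showing that, after conditioning on $\vec\Delta\subseteq\vec\sigma$ and on all vertices of $\Delta^*$ having their prescribed activity status, the residual events "$x\in\sigma^1$" for $x\in\Delta^*$ fail to be mutually independent \emph{only} through sensitive pairs, and that the failure probability is additive over those pairs rather than requiring inclusion–exclusion. Concretely, one wants to exhibit, for each situation in the conditioned space, an "assignment" of the relevant coin flips that makes all the $\Delta^1/\Delta^{\bar 1}$ constraints hold simultaneously, and to show the measure of configurations where this fails is at most the sum of the per-pair bad-event probabilities. The cleanest route is probably to fix a spanning collection of independent coin flips (one per vertex of each cycle that is not a head of $\vec\Delta$ and not otherwise constrained), observe that conditioning on the activity pattern of $\Delta^*$ leaves the remaining flips uniform and independent, and then argue pair by pair: for a sensitive pair $(x,y)$, $\Phi$ respects the constraint unless $xWy$ happens to be an internally unbroken active run and the parity of the break pattern forces $x,y$ onto the same side of $\Phi$'s two-colouring — an event of probability at most $1/2^{x_i}$ (linear) or $1/(5\cdot 2^{y_i})$ (circular, using the extra uniform choice among maximum independent sets of the odd cycle). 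Combining the factor $1/2^{\size{E(\vec\Delta)}}$ from the arcs, the factor $1/2^{\size{\Delta^*}}$ from the activity coin flips, and the error terms from sensitive pairs and from $q(\Delta)$ yields exactly the claimed bound.
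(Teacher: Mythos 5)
Your overall strategy is the paper's own: condition on the arcs of $\vec\Delta$ (factor $2^{-\size{E(\vec\Delta)}}$), charge a factor $1/2$ for each vertex of $\Delta^*$, and correct for dependencies by an additive union bound, with the linear term $2^{-x_i}$ coming from the probability that the path between the pair fails to be broken by a tail, the circular term $1/(5\cdot 2^{y_i})$ from the uniform choice of $\Phi$ among the maximum independent sets of an all-active odd cycle, and $q(\Delta)/5$ from the case that the feasible run through $u$ is the whole odd cycle $Z$. So the architecture and all three correction mechanisms match the paper.

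There is, however, a concrete error in how you justify the main factor $2^{-\size{\Delta^*}}$. You attribute it to ``activity coin flips'' for vertices of $\Delta^1\cup\Delta^{\bar 1}$, allowing for a vertex ``not already an endpoint of an arc in $\vec\Delta$''. By the definition of a template no such vertex exists: every element of $\Delta^1\cup\Delta^{\bar 1}$ is a head of $\vec\Delta$ and every element of $\Delta^3\cup\Delta^{\bar 3}$ is a tail of $\vec\Delta$, so its activity or inactivity is already implied by the event $\vec\Delta\subseteq\vec\sigma$ and costs nothing beyond $2^{-\size{E(\vec\Delta)}}$. The factor $1/2$ per element of $\Delta^*$ comes instead from the randomness of $\Phi$: when the active (respectively feasible) run containing the vertex is a path or an even cycle, $\Phi$ chooses between a maximum independent set and its complement (or between the two maximum independent sets), so the prescribed membership or non-membership holds with probability exactly $1/2$; for $u\in\Delta^3\cup\Delta^{\bar 3}$ the analogous Phase~3 coin is independent of the earlier choices precisely because $\Delta$ is admissible. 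Your write-up invokes $\Phi$'s choices only inside the sensitive-pair error analysis and never charges them with the per-vertex $1/2$, so as written the derivation of $2^{-w(\Delta)}$ rests on coins that do not exist; the repair is exactly the paper's accounting. Two smaller imprecisions: a pair sharing an unbroken run with favourable parity is not independent but only helpfully correlated (which is why the definition of a sensitive pair also carries parity and no-tail conditions), and for a circular pair the probability that $\Phi$ ``picks wrongly'' given that the whole odd cycle $W$ is active is $(\size{W}+1)/(2\size{W})$, not $1/\size{W}$; what is at most $1/\size{W}\leq 1/5$ is the relative excess over the baseline $1/2$ already absorbed into $2^{-w(\Delta)}$, which is how the subtracted term $1/(5\cdot 2^{y_i})$ arises.
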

\begin{proof}
  Consider a random situation $\sigma$. We need to estimate the
  probability that $\sigma$ conforms to $\Delta$. We begin by
  investigating the probability $P_1$ that $\sigma$ weakly conforms to
  $\Delta$.

  In Phase 1, the orientation $\vec{\sigma}$ is chosen by directing
  each edge of $M$ independently at random, each direction being
  chosen with probability $1/2$. Therefore, the probability that the
  orientation of each edge in the subgraph specified by $\vec{\Delta}$
  agrees with the orientation chosen at random is
  $(1/2)^{\size{E(\vec{\Delta})}}$.

  As noted above, the sets $\Delta^1,\Delta^{\bar 1},\Delta^3$ and
  $\Delta^{\bar 3}$ prescribe vertices to be added or not added in
  Phases 1 and 3 of the algorithm.

  Suppose, for now, that every active run $R$ has $\size{R \cap
    (\Delta^1 \cup \Delta^{\bar 1})}=1$ and is either a path or an
  even cycle. Then a given vertex in $\Delta^1$ is added in Phase 1
  with probability $1/2$. Likewise, a given vertex in $\Delta^{\bar
    1}$ is not added in Phase 1 with probability $1/2$. Indeed, $R$
  has either one or two maximum independent sets and $\Phi(R)$ chooses
  either between the maximum independent set and its complement or
  between the two maximum independent sets.

  There are $\size{\Delta^1}$ vertices required to be added in Phase 1
  and $\size{\Delta^{\bar 1}}$ vertices required to not be added in
  Phase 1. These events are independent each with probability $1/2$,
  giving the resultant probability
  \begin{equation}\label{eq:phase-1}
    \prob{\Delta^1 \subseteq \sigma_1, \Delta^{\bar 1} \cap \sigma_1= \emptyset} = 
    \Bigl(\frac12\Bigr)^{\size{\Delta^1}+\size{\Delta^{\bar 1}}}.
  \end{equation}
  The probability $P_1$ is obtained by multiplying~\eqref{eq:phase-1}
  by $(1/2)^{\size{E(\vec{\Delta})}}$.

  We now assess the probability that $\sigma$ conforms to $\Delta$
  under the assumption that it conforms weakly. If $\Delta^3\cup
  \Delta^{\bar 3}$ is empty, the probability is 1 for trivial
  reasons. Otherwise, the admissibility of $\Delta$ implies that
  $\Delta^3\cup \Delta^{\bar 3} = \Setx u$ and $u$ is feasible with
  respect to $\sigma$. Let $R$ be the feasible run containing
  $u$. Suppose that $R$ is a path or an even cycle. Then if $u \in
  \Delta^3$, it is added in Phase 3 with probability $1/2$, and if $u
  \in \Delta^{\bar 3}$, it is not added in Phase 3 with probability
  $1/2$. Since $u$ is the only vertex allowed in $\Delta^3\cup
  \Delta^{\bar 3}$, we obtain
  \begin{equation*}
    \prob{\Delta^3 \subseteq \sigma^3, \Delta^{\bar 3} \cap \sigma^3= \emptyset} =
    \begin{cases}
      \frac12 & \text{ if $u\in \Delta^3\cup \Delta^{\bar 3}$,}\\
      1 & \text{ otherwise}.
    \end{cases}
  \end{equation*}
  The assumption that $u$ is feasible whenever $\sigma$ weakly
  conforms to $\Delta$ and $\Delta^3\cup \Delta^{\bar 3} = \Setx{u}$
  implies that the addition of $u$ to $\sigma^3$ is independent of the
  preceding random choices.

  Note that we can relax the assumptions above to allow, for instance,
  $\size{R\cap \Delta^1}\geq 1$, provided that the vertices of
  $\Delta^1$ are appropriately spaced. Suppose that $x,y$ are in the
  same component $W$ of $F$ and all vertices of $xWy$ are active after
  the choice of orientations in Phase 1. Let $R$ be the active run $R$
  containing $xWy$.

  Observe that if $d_W(x,y)$ is even, $\Phi$ will choose both $x$ and
  $y$ with probability $1/2$ for addition to $I$ in Phase 1, an
  increase compared to the probability $1/4$ if they are in different
  active runs. On the other hand, if $d_W(x,y)$ is odd, then the
  probability of adding both $x$ and $y$ is zero as $x$ and $y$ cannot
  both be contained in $\Phi(R)$. Thus, if $x,y\in\Delta^1$ and
  $d_W(x,y)$ is odd, then $x$ and $y$ must be in distinct active runs
  with respect to any situation conforming to $\Delta$. As a result,
  we will in general get a lower value for the probability in
  \eqref{eq:phase-1}; the estimate will depend on the sensitive pairs
  involved in $\Delta$.

  \begin{figure}
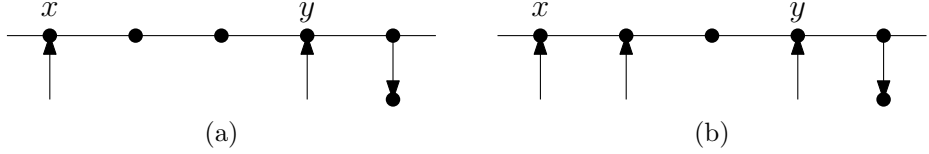

    \centering
    \hf\sfig{73}{}\hf\sfig{74}{}\hf
    \caption{The probability that $x$ and $y$ are in distinct active
      runs in a conforming random situation is: (a) $3/4$, (b) $1/2$.}
    \label{fig:sensitive}
  \end{figure}

  Let $(x,y)$ be a $k$-free sensitive pair contained in a cycle $W$ of
  $F$, and let the internal vertices of $xWy$ which are not heads of
  $\Delta$ be denoted by $x_1,\dots,x_k$. Suppose that $(x,y)$ is of
  type (a); say, $x,y\in \Delta^1$. The active runs of $x$ and $y$
  with respect to $\sigma$ will be separated if we require that at
  least one of $x_1,x_2,...,x_k$ is the tail of an arc of
  $\vec{\sigma}$, which happens with probability $1-(1/2)^k$. The same
  computation applies to a sensitive pair of type (b).

  Now suppose that $(x,x)$ is sensitive of type (c), i.e., $x$ is the
  only member of $\Delta^1$ belonging to an odd cycle $W$ of length
  $\ell$. If some vertex of $W$ is the tail of an arc of $\vec\sigma$,
  then $x$ will be added in Phase 1 with probability $1/2$ as
  usual. It can happen, however (with probability $1/2^{\ell-1}$),
  that all the vertices of $W$ are heads in $\vec\sigma$, in which
  case $\Phi(V(W))$ is one of $\ell$ maximum independent sets in
  $W$. If this happens, $x$ will be added to $I$ with probability $1/2
  \cdot (\ell-1)/\ell\geq 2/5$ rather than $1/2$; this results in a
  reduction in $\prob{\Gamma(\Delta)}$ of at most $1/5 \cdot
  1/2^{\ell-1}\cdot 1/2^{w(\Delta)}$.

  Finally, let us consider the situation where $u\in\Delta^3$ and the
  feasible run containing $u$ is cyclic, that is, the case where every
  vertex in $C_u$ is feasible. If $Z$ is even, then this has no effect
  as $u$ is still added in Phase 3 with probability $1/2$. If $Z$ is
  odd, then $u$ is added in Phase 3 with probability at least $2/5$
  instead. Thus, if the probability of all vertices in $C_u$ being
  feasible is $q(\Delta)$, then the resultant loss of probability from
  $\prob{\Gamma(\Delta)}$ is at most $q(\Delta)/(5\cdot
  2^{w(\Delta)})$.

  Putting all this together gives:
  \begin{equation*}
    \prob{\Gamma(\Delta)} \geq
    \left( 1 - \sum_{i=1}^{\ell}\frac{1}{2^{x_i}} - 
      \sum_{i=1}^{c}\frac{1}{5\cdot 2^{y_i}} - 
      \frac{q(\Delta)}{5} \right) 
    \cdot \frac{1}{2^{w(\Delta)}}
  \end{equation*}
  as required.
\end{proof}

We remark that by a careful analysis of the template in question, it
is sometimes possible to obtain a bound better than that given by
Lemma~\ref{l:dep}; however, the latter bound will usually be
sufficient for our purposes.

A template without any sensitive pairs is called \emph{weakly
  regular}. If a weakly regular template $\Delta$ has $q(\Delta) = 0$,
then it is \emph{regular}. By Lemma~\ref{l:dep}, if $\Delta$ is a
regular template, then $\prob{\Gamma(\Delta)} \geq
1/2^{w(\Delta)}$. When using Lemma~\ref{l:dep} in this way, we will
usually just state that the template in question is regular and give
its weight, and leave the straightforward verification to the reader.

The analysis is often more involved if sensitive pairs are present. To
allow for a brief description of a template $\Delta$, we say that
$\Delta$ is \emph{covered (in $G$) by} ordered pairs of vertices
$(x_i,y_i)$, where $i=1,\dots,k$, if every sensitive pair of $\Delta$
is of the form $(x_i,y_i)$ for some $i$. In most cases, our
information on the edge set of $G$ will only be partial; although we
will not be able to tell for sure whether any given pair of vertices
is sensitive, we will be able to restrict the set of possibly
sensitive pairs.

For brevity, we also use $\pair x y \ell$ to denote an $\ell$-free
pair of vertices $(x,y)$. Thus, we may write, for instance, that a
template $\Delta$ is covered by pairs $\pair x y 2$ and $\pair z z
4$. By Lemma~\ref{l:dep}, we then have $\prob{\Gamma(\Delta}) \geq
1/2^{w(\Delta)} \cdot (1 - 1/4 - 1/80)$.

In some cases, the structure of $G$ may make some of the symbols in a
diagram redundant. For instance, consider the diagram in
Figure~\ref{fig:removable}(a) and let $R_1$ be the event corresponding
to the associated template $\Delta_1$. Since the weight of $\Delta_1$
is 4, Lemma~\ref{l:dep} implies a lower bound for $\prob{R_1}$ which
is slightly below $1/16$. However, if we happen to know that the mate
of $u_+$ is $v_-$, then we can remove the symbol at $\mate{u}{+}$; the
resulting diagram encodes the same event and comes with a better bound
of $1/8$. We will describe this situation by saying that the symbol at
$\mate{u}{+}$ in the diagram for $\Delta_1$ is \emph{removable} (under the
assumption that $\mate{u}{+}=v_-$).

\begin{figure}
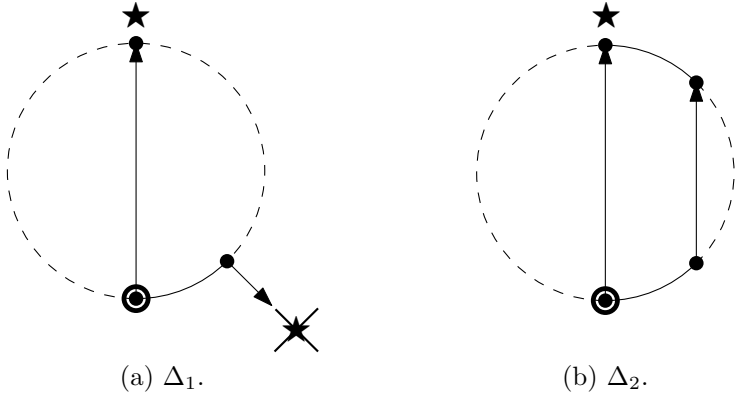

  \centering
  \sfigdef{85}
  \hf\sfigtop{85}{$\Delta_1$.}\hf\sfigtop{86}{$\Delta_2$.}\hf
  \caption{The symbol at $\mate{u}{+}$ in the diagram defining the template
    $\Delta_1$ becomes removable if we add the assumption that
    $\mate{u}{+}=v_-$.}
  \label{fig:removable}
\end{figure}

We extend the terminology used for templates to events defined by
templates. Suppose that $\Delta$ is a template in $G$. The properties
of $\Gamma(\Delta)$ simply reflect those of $\Delta$. Thus, we say
that the event $\Gamma(\Delta)$ is \emph{regular} (\emph{weakly
  regular}) if $\Delta$ is regular (weakly regular), and we set
$q(\Gamma(\Delta)) = q(\Delta)$. A pair of vertices is said to be
\emph{$k$-free} for $\Gamma(\Delta)$ if it is $k$-free for $\Delta$;
$\Gamma(\Delta)$ is \emph{covered by} a set of pairs of vertices if
$\Delta$ is.


\section{Events forcing a vertex}
\label{sec:forcing}

In this section, we build up a repertoire of events forcing the
distinguished vertex $u$. (Recall that $u$ is forced by an event
$\Gamma$ if $u$ is contained in $I(\sigma)$ for every situation
$\sigma\in\Gamma$.) In our analysis, we will distinguish various cases
based on the local structure of $G$ and show that in each case, the
total probability of these events (and thus the probability that $u\in
I$) is large enough.

Suppose first that $\sigma$ is a situation for which $u$ is active. By
the description of Algorithm 1, we will have $u\in I$ if either both
$u_+$ and $u_-$ are inactive, or $u\in\sigma^1$. Thus, each of the
templates $E^0,E^-,E^+,E^\pm$ represented by the diagrams in
Figure~\ref{fig:local1} defines an event which forces $u$. These
events (which will be denoted by the same symbols as the templates,
e.g., $E^0$) are pairwise disjoint. Observe that by the assumption
that $G$ is simple and triangle-free, each of the diagrams is valid in
$G$.

\begin{figure}
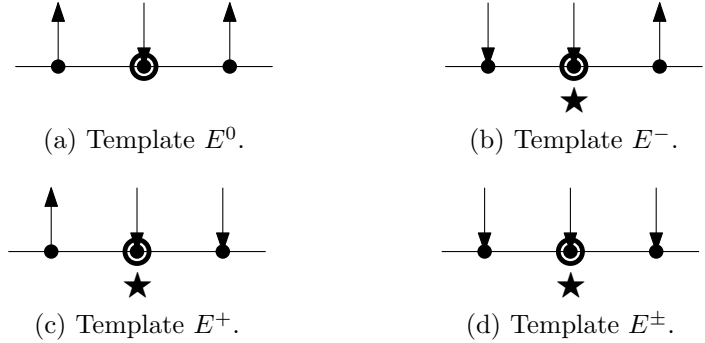

  \centering\hf
  \sfigdef4
  \sfigtop3{Template $E^0$.}\hf
  \sfigtop4{Template $E^-$.}\hf\\\hf
  \sfig5{Template $E^+$.}\hf
  \sfig6{Template $E^\pm$.}\hf
  \caption{Some templates defining events which force $u$.}
  \label{fig:local1}
\end{figure}

It is not difficult to estimate the probabilities of these events.
The event $E^0$ is regular of weight 3, so $\prob{E^0} \geq 1/8 =
32/256$ by Lemma~\ref{l:dep}. Similarly, $E^+$ and $E^-$ are regular
of weight 4 and have probability at least $16/256$ each. The weakly
regular event $E^\pm$ has weight 4 and the only potentially sensitive
pair is $(u,u)$. If the pair is sensitive, the length of $Z$ must be
odd and hence at least 5; thus, the pair is 2-free. By
Lemma~\ref{l:dep},
\begin{align*}
  \prob{E^\pm} \geq \frac1{16} \cdot \frac{19}{20} = 15.2/256.
\end{align*}
Note that if $Z$ has a chord (for instance, $uv$), then $E^\pm$ is
actually regular, which improves the above estimate to $16/256$.

By the above,
\begin{equation*}
  \prob{E^0 \cup E^+ \cup E^- \cup E^\pm} \geq \frac{32 + 16 + 16 +
    15.2}{256} = \frac{79.2}{256}.
\end{equation*}
These events cover most of the situations where $u\in I$. To prove
Theorem~\ref{t:main}, we will need to find other situations which also
force $u$ and their total probability is at least about one tenth of
the above. Although this number is much smaller, finding the required
events turns out to be a more difficult task.

Since Figure~\ref{fig:local1} exhausts all the possibilities where $u$
is active, we now turn to the situations where $u$ is inactive.

Assume an event forces $u$ although $u$ is inactive. We find that if
$u_-$ is active, then $u_{-2}$ must be added in Phase 1. If $u_-$ is
inactive, then there are several configurations which allow $u$ to be
forced, for instance if $u$ is added in Phase 3. However, the result
also depends on the configurations around $u_+$ and $v$. We will
express the events forcing $u$ as combinations of certain `primitive'
events.

Let us begin by defining templates $A,B,C_1,C_2,C_3$ (so called
\emph{left templates}). We remind the reader that the vertex $v$ is
the mate of $u$. Diagrams corresponding to the templates are given in
Figure~\ref{fig:left}:
\begin{center}
  \begin{tabular}{c|c|c}
    template & heads of $\vec\sigma$ & other conditions\\\hline
    $A$ & $v,u_-,u_{-2}$ & $u_{-2}\in\sigma^1$\\
    $B$ & $v,\mate{u}{-}$ & $u\in\sigma^3$\\
    $C_1$ & $v,\mate{u}{-}$ & $u\notin\sigma^3, \mate{u}{-}\in\sigma^1$\\
    $C_2$ & $v,\mate{u}{-},u_{-2}$ & $u\notin\sigma^3, \mate{u}{-}\notin\sigma^1, u_{-2}\in\sigma^1$\\
    $C_3$ & $v,\mate{u}{-},u_{-2},\mate{u}{-3}$ & $u\notin\sigma^3, \mate{u}{-}\notin\sigma^1, u_{-2}\notin\sigma^1$
  \end{tabular}  
\end{center}
In addition, for $P\in\Setx{A,B,C_1,C_2,C_3}$, the template $P^*$ is
obtained by exchanging all `$-$' signs for `$+$' in this
description. These are called \emph{right templates}. In our diagrams,
templates such as $A$ or $C_1$ restrict the situation to the left of
$u$, while templates such as $A^*$ or $C_1^*$ restrict the situation
to the right.

\begin{figure}
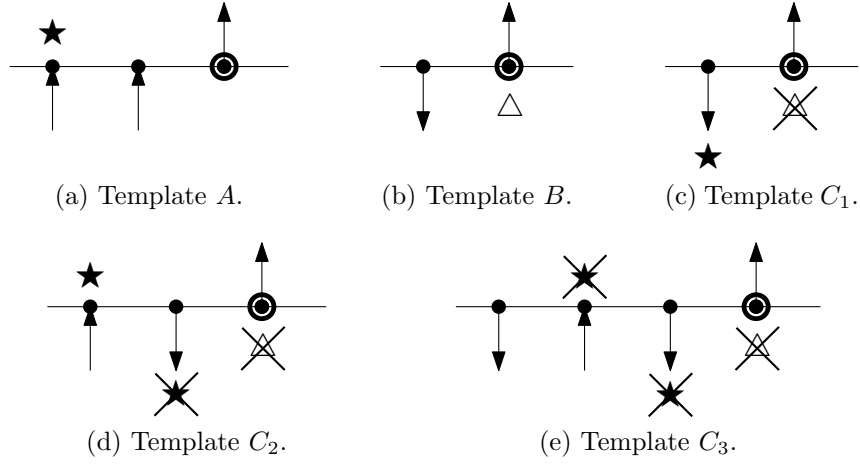

  \centering\hf
  \sfigdef9
  \sfigtop7{Template $A$.}\hf
  \sfigtop8{Template $B$.}\hf
  \sfigtop9{Template $C_1$.}\hf\\\hf
  \sfigdef{10}
  \sfigtop{10}{Template $C_2$.}\hf
  \sfigtop{11}{Template $C_3$.}\hf
  \caption{Left templates.}
  \label{fig:left}
\end{figure}

We also need primitive templates related to $v$ and its neighbourhood
(\emph{upper templates}), for the configuration here is also
relevant. These are simpler (see Figure~\ref{fig:upper}):
\begin{center}
  \begin{tabular}{c|c|c}
    template & heads of $\vec\sigma$ & other conditions\\\hline
    $D^-$ & $v,v_-,\mate{v}{+}$ & $v_-\in\sigma^1$\\
    $D^0$ & $v,v_-,v_+$ & $v\notin\sigma^1$\\
    $D^+$ & $v,\mate{v}{-},v_+$ & $v_+\in\sigma^1$
  \end{tabular}  
\end{center}

\begin{figure}
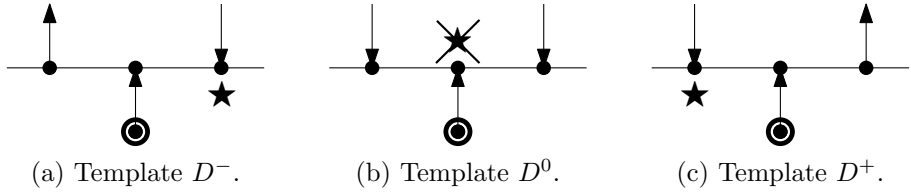

  \centering\hf
  \sfigdef{14}
  \sfigtop{14}{Template $D^-$.}\hf
  \sfigtop{13}{Template $D^0$.}\hf 
  \sfigtop{12}{Template $D^+$.}\hf
  \caption{Upper templates.}
  \label{fig:upper}
\end{figure}

We can finally define the templates obtained from the left, right and
upper events as their combinations. More precisely, for
$P,Q\in\Setx{A,B,C_1,C_2,C_3}$ and $R\in\Setx{D^-,D^0,D^+}$, we define
$PQR$ to be the template $\Delta$ such that
\begin{align*}
  \vec\Delta &= \vec P \cup \vec{Q^*} \cup \vec R,\\
  \Delta^1 &= P^1 \cup (Q^*)^1 \cup R^1,
\end{align*}
and so on for the other constituents of the template. The same symbol
$PQR$ will be used for the event defined by the template. If the
result is not a legitimate template (for instance, because an edge is
assigned both directions, or because $u$ is required to be both in
$\Delta^3$ and $\Delta^{\bar 3}$), then the event is an empty one and
is said to be \emph{invalid}, just as if it were defined by an invalid
diagram.

Let $\Sigma$ be the set of all valid events $PQR$ given by the above
templates. Thus, $\Sigma$ includes, e.g., the events $AAD^0$ or
$BC_1D^+$. However, some of them (such as $BC_1D^+$) may be invalid,
and the probability of others will in general depend on the structure
of $G$. We will examine this dependence in detail in the following
section. It is not hard to check (using the description of Algorithm
1) that each of the valid events in $\Sigma$ forces $u$ and also that
each of them is given by an admissible template, as defined in
Section~\ref{sec:events}.


\section{Analysis: $uv$ is not a chord}
\label{sec:nochord}

We are going to use the setup of the preceding sections to prove
Theorem~\ref{t:main} for a cubic bridgeless graph $G$. Recall that $v$
denotes the vertex $u'$ and $Z$ denotes the cycle of $F$ containing
$u$. If we can show that $\prob{u\in I} \geq 11/32$, then by
Lemma~\ref{l:fraccol}, $\chi_f(G) \leq 32/11$ as required. Thus, our
task will be accomplished if we can present disjoint events forcing
the fixed vertex $u$ whose probabilities sum up to at least $11/32 =
88/256$. It will turn out that this is not always possible, which will
make it necessary to use a compensation step discussed in
Section~\ref{sec:phase5}.

In this section, we begin with the case where $v$ is contained in a
cycle $C_v \neq Z$ of $F$ (that is, $uv$ is not a chord of $Z$). We
define a number $\eps(u)$ as follows:
\begin{equation*}
  \eps(u) = \begin{cases}
    1 & \text{if $uv$ is contained in a 4-cycle,}\\
    0 & \text{if $u$ has no $F$-neighbour contained in a 4-cycle
      intersecting $C_v$,}\\
    -1 & \text{otherwise}.
  \end{cases}
\end{equation*}
The vertices with $\eps(u)=-1$ will be called \emph{deficient of type
  0}.

The end of each case in the proof of the following lemma is marked by
$\blacktriangle$.

\begin{lemma}
  \label{l:nochord}
  If $uv$ is not a chord of $Z$, then 
  \begin{equation*}
    \prob{u\in I} \geq \frac{88+\eps(u)}{256}.
  \end{equation*}
\end{lemma}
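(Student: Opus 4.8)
The plan is to bound $\prob{u \in I}$ from below by exhibiting a collection of pairwise disjoint events forcing $u$, and summing their probabilities. I would split into two regimes according to whether $u$ is active. When $u$ is active, the four templates $E^0, E^-, E^+, E^{\pm}$ from Section~\ref{sec:forcing} already contribute $79.2/256$ (or $80/256$ if $Z$ has a chord), so the bulk of the work concerns situations where $u$ is inactive. For those, I would use the events $PQR \in \Sigma$ assembled from the left, right and upper primitive templates, plus the simple observation that whenever $u_-$ is active, forcing $u$ requires $u_{-2} \in \sigma^1$ (this is the content captured by template $A$ and its combinations), and symmetrically on the right. The key point is that conditioning on the status of $u_-, u_+, v$ partitions the inactive-$u$ world, and within each cell one reads off which $PQR$ events can force $u$; the left/right/upper decomposition is designed precisely so these events are disjoint.

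The core estimate is then $\prob{u \in I} \ge 79.2/256 + \sum_{PQR} \prob{PQR}$, where the sum is over valid, non-invalid events in $\Sigma$, and I would evaluate each $\prob{PQR}$ using Lemma~\ref{l:dep}: identify the weight $w(PQR) = \size{E(\vec\Delta)} + \size{\Delta^*}$, enumerate the (at most a handful of) possibly sensitive pairs together with their freeness, and apply the bound $\prob{\Gamma(\Delta)} \ge (1 - \sum 2^{-x_i} - \sum \tfrac15 2^{-y_i} - \tfrac15 q(\Delta)) \cdot 2^{-w(\Delta)}$. Most of these templates will be regular or weakly regular with 2-free or better sensitive pairs, so each contributes close to its nominal $2^{-w}$. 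The dependence on $\eps(u)$ enters through the local structure: when $uv$ lies in a $4$-cycle ($\eps(u)=1$) certain chords are forced, which makes some otherwise weakly-regular templates regular (removable symbols, as with $\Delta_1$ in Figure~\ref{fig:removable}) or increases the number of valid $PQR$ events, buying the extra $1/256$; when $u$ is deficient of type $0$ ($\eps(u)=-1$) one of the needed edges is absent, one template becomes invalid or loses weight, and we lose $1/256$ — hence the $88+\eps(u)$ numerator.

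Concretely I would organize the argument as a case analysis on the local configuration near $v$ and $C_v$: (i) $uv$ in a $4$-cycle; (ii) some $F$-neighbour of $u$ in a $4$-cycle meeting $C_v$ but $uv$ not in one; (iii) the deficient case. In each case I would tabulate the relevant valid events $PQR$, note any removable symbols coming from the assumed $4$-cycles (using that $G$ is simple and triangle-free to guarantee the diagrams are valid and that various vertices are distinct), invoke Lemma~\ref{l:dep} for each, and check the sum of the contributions plus $79.2/256$ exceeds $(88+\eps(u))/256$. The main obstacle is the bookkeeping in case~(iii): ruling out that too many templates simultaneously degenerate, and verifying that the single guaranteed loss is exactly $1/256$ and not worse — this is where Lemma~\ref{l:split-cycle} and the maximality of the number of components of $F$ are needed, to exclude short-cycle configurations (e.g. a $5$-cycle with a large edge-cut to its complement) that would otherwise force additional sensitive pairs or invalidate more events than accounted for.
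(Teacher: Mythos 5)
Your overall strategy is the paper's: take the baseline $79.2/256$ from the disjoint events $E^0,E^-,E^+,E^\pm$, then add disjoint events $PQR\in\Sigma$ estimated through Lemma~\ref{l:dep}, using Lemma~\ref{l:split-cycle} and the choice of $F$ to exclude degenerate configurations, with the $\eps(u)$ term reflecting that more structure (a 4-cycle through $uv$) must buy a higher total while deficiency of type 0 relaxes the target. However, as written the proposal is a plan rather than a proof, and the plan has a concrete defect: your case partition does not cover the statement. Your cases (ii) and (iii) both describe the deficient situation $\eps(u)=-1$, and the generic case $\eps(u)=0$ --- where $uv$ lies in no 4-cycle and no $F$-neighbour of $u$ lies in a 4-cycle meeting $C_v$ --- is missing. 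That is precisely where essentially all the work lies: the paper splits it further according to which edges of $M$ join $\Setx{u_{-2},u_{+2}}$ to $\Setx{v_-,v_+}$ (its Cases 3--5), because the choice and validity of the usable events in $\Sigma$, and the freeness of their sensitive pairs, change from one subcase to the next.

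The second gap is your blanket assertion that most templates are regular or have 2-free sensitive pairs and hence ``contribute close to $2^{-w}$''. In the tight subcases this is exactly what fails, and the naive sum lands around $87.x/256$, short of $88/256$. The paper closes the gap by arguments your outline does not anticipate: if $\mate{u}{+}$ or $\mate{u}{-}$ lies on $Z$ then $E^\pm$ becomes regular and contributes an extra $0.8/256$, so one may assume otherwise; if $\mate{u}{-}=v_{+2}$ then $\eps(u)=-1$ and the weaker target is already met, so one may assume otherwise; and in the worst configuration of its Case 3 the event $C_1C_2D^+$ is covered only by pairs for which Lemma~\ref{l:dep} yields no usable bound, forcing a direct computation (conditioning on $v_{+2}$ and $v_{+4}$ being tails, using triangle-freeness) to extract the last $0.06/256$. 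Similarly, in Case 2 one needs the quantitative estimate $q(BBD^0)\leq 1/10$ for the odd-cycle correction, and in Case 4 a trade-off between $C_1AD^+$ and $C_1AD^0$ when $(v_+,\mate{u}{-})$ is not 2-free. Without carrying out this bookkeeping --- which is the actual content of the lemma --- the proposal does not establish the bound $\prob{u\in I}\geq (88+\eps(u))/256$.
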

\begin{proof}
  As observed in Section~\ref{sec:forcing}, the probability of the
  event $E^0 \cup E^-\cup E^+$ is at least $64/256$. For the event
  $E^\pm$, we only get the estimate $\prob{E^\pm} \geq 15.2/256$,
  which yields a total of $79.2/256$.

  \begin{xcase}{The edge $uv$ is contained in two
      4-cycles.}\label{case:nochord-two-c4}%
    Consider the event $BBD^0$ of weight 5 (see the diagram in
    Figure~\ref{fig:nochord-c4}(a)). We claim that $\prob{BBD^0} \geq
    8/256$. Note that for any situation $\sigma \in BBD^0$, at least
    one of the vertices $v_-$, $v_+$ is added to $I$ in Phase 1. It
    follows that for any such situation, $u_-$ or $u_+$ is
    infeasible. Thus, $q(BBD^0) = 0$, and by Lemma~\ref{l:dep},
    \begin{equation*}
      \prob{BBD^0} \geq \frac1{2^5} = \frac8{256}.
    \end{equation*}

    Next, we use the event $ABD^-$ of weight 7 (see
    Figure~\ref{fig:nochord-c4}(b)). Since $u_{-2}\in\sigma^1$ for any
    situation $\sigma\in ABD^-$, it is infeasible and hence
    $q(ABD^-)=0$. Furthermore, $ABD^-$ contains no sensitive pair and
    thus it is regular. Lemma~\ref{l:dep} implies that $\prob{ABD^-}
    \geq 2/256$. This shows that
    \begin{equation*}
      \prob{u\in I} \geq 89.2/256. 
    \end{equation*}
    We remark that a further contribution of $2/256$ could be obtained
    from the event $AC_1D^-$, but it will not be necessary.
  \end{xcase}

  \begin{figure}
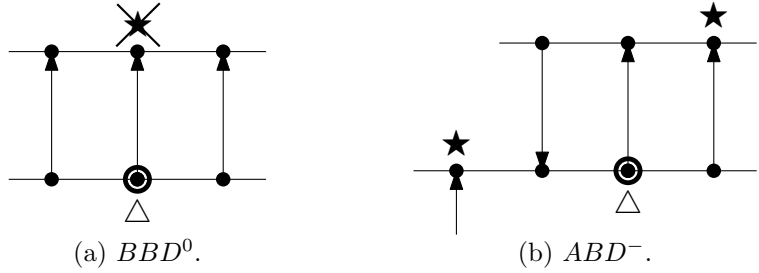

    \centering
    \sfigdef{40}
    \hf\sfigtop{40}{$BBD^0$.}\hf\sfigtop{61}{$ABD^-$.}\hf
    \caption{The events used in Case~\ref{case:nochord-two-c4} of the proof of
      Lemma~\ref{l:nochord}.}
    \label{fig:nochord-c4}
  \end{figure}

  \begin{xcase}{$uv$ is contained in one
      4-cycle.}\label{case:nochord-one-c4}%
    We may assume that $u_+$ is adjacent to $v_-$. From
    Figure~\ref{fig:nochord-one-c4}(a), we see that the event $BBD^0$
    is weakly regular; we will estimate $q(BBD^0)$. Let $\sigma$ be a
    random situation from $BBD^0$. If $C_v$ is even, then
    $v_-\in\sigma^1$, which makes $u_+$ infeasible, so $q(BBD^0) =
    0$. Assume then that $C_v$ is odd; since $G$ is triangle-free, the
    length of $C_v$ is at least 5. Thus it contains at least two
    vertices other than $v,v_-,v_+$; consequently, the probability
    that all the vertices of $C_v$ are active is at most $1/4$. If all
    the vertices of $C_v$ are active, then $v_-\in\sigma^1$ (and hence
    $u_+$ is infeasible) with probability at least $2/5$. It follows
    that
    \begin{equation*}
      q(BBD^0) \leq \cprob{v_-\notin\sigma^1}{\sigma\in BBD^0} \leq \frac1{10}.
    \end{equation*}
    By Lemma~\ref{l:dep}, $\prob{BBD^0} \geq 98/100 \cdot 1/64 =
    3.92/256$.

    Consider the weakly regular event $BBD^-$
    (Figure~\ref{fig:nochord-one-c4}(b)). Observe first that the event is
    valid in $G$ as $u_-$ and $v_+$ are not neighbours. Since $u_+$ is
    infeasible with respect to any situation from $BBD^-$, we have
    $q(BBD^-) = 0$ and so $BBD^-$ is regular. Lemma~\ref{l:dep}
    implies that $\prob{BBD^-} \geq 4/256$.

    Finally, consider the events $ABD^-$ and $AC_1D^-$
    (Figure~\ref{fig:nochord-one-c4}(c) and (d)); note that the only
    difference between them is that for $\sigma\in ABD^-$,
    $u\in\sigma^3$, whereas for $\sigma\in AC_1D^-$ it is the
    opposite. Both events, however, force $u$. Observe that their
    validity does not depend on whether $u_{-2}$ and $v_+$ are
    neighbours: even if they are, the diagram prescribes consistent
    orientations at both ends of the edge $u_{-2}v_+$. The events are
    regular of weight 8, and thus $\prob{ABD^- \cup AC_1D^-} \geq
    2/256$. This proves that $\prob{u\in I} > 89.1/256$.
  \end{xcase}
  \begin{figure}
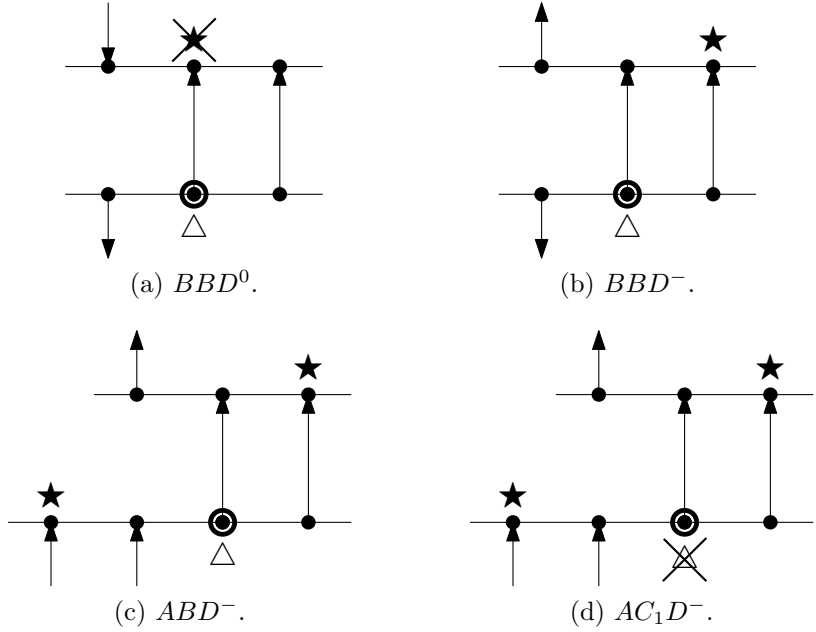

    \centering
    \sfigdef{65}
    \hf\sfigtop{62}{$BBD^0$.}\hf\sfigtop{63}{$BBD^-$.}\hf\\
    \hf\sfigtop{64}{$ABD^-$.}\hf\sfigtop{65}{$AC_1D^-$.}\hf
    \caption{The events used in Case~\ref{case:nochord-one-c4} of the proof of
      Lemma~\ref{l:nochord}.}
    \label{fig:nochord-one-c4}
  \end{figure}

  Having dealt with the above cases, we may now assume that the set
  $\Setx{u_-,u_+,v_-,v_+}$ is independent.

  \begin{xcase}{$M$ includes the edges $u_{-2}v_+$ and
      $u_{+2}v_-$.}\label{case:nochord-ll-rr}%
    The event $BBD^+$ (Figure~\ref{fig:nochord-ll-rr}(a)) is regular
    of weight 7; thus, $\prob{BBD^+} \geq 2/256$. Similarly,
    $\prob{BBD^-}\geq 2/256$. We also have $\prob{BBD^0} \geq 2/256$
    since $v_+$ and $v_-$ have mates on $Z$, ensuring that one of the
    vertices of $Z$ is infeasible and thus $q(BBD^0) =
    0$. Furthermore, $\prob{ABD^-\cup BAD^+} \geq 2/256$ by
    Lemma~\ref{l:dep}.
    
    We may assume that $\mate{u}{-} \neq v_{+2}$ and $\mate{u}{+} \neq v_{-2}$, for
    otherwise $u$ has a neighbour contained in a 4-cycle and
    $\eps(u)=-1$. In that case, the bound $\prob{u\in I} \geq
    87.2/256$, proved so far, would be sufficient.

    If $u_+$ or $u_-$ have a mate on $Z$, then $E^\pm$ is regular and
    hence $\prob{E^\pm} = 16/256$. This adds further $0.8/256$ to
    $\prob{u\in I}$, making it reach $88/256$, which is
    sufficient. Thus, we may assume that $\mate{u}{-}$ and $\mate{u}{+}$ are not
    contained in $Z$.

    Consider the event $C_1AD^+$ given by the diagram in
    Figure~\ref{fig:nochord-ll-rr}(b). Since this is the first time
    that the analysis of its probability involves a sensitive pair, we
    explain it in full detail. Assume that there exists a sensitive
    pair for this event. The only vertices which can be included in
    the pair are $\mate{u}{-}$, $u_{+2}$ and $v_+$. None of $(u_{+2},u_{+2})$
    and $(v_+,v_+)$ is a circular sensitive pair, since both $Z$ and
    $C_v$ contain a tail in $C_1AD^+$ ($u_-$ and $v_-$,
    respectively). Hence, the only possible circular sensitive pair is
    $(\mate{u}{-},\mate{u}{-})$. As for linear sensitive pairs, the only possibility
    is $(v_+,\mate{u}{-})$: the vertex $u_{+2}$ is ruled out since none of
    $\mate{u}{-}$ and $v_+$ is contained in $Z$, and the pair $(\mate{u}{-},v_+)$
    cannot be sensitive as $v_-$ is a tail in $C_1AD^+$. (Note that
    the sensitivity of a pair depends on the order of the vertices in
    the pair.) Summarizing, the sensitive pair is $(\mate{u}{-},\mate{u}{-})$ or
    $(v_+,\mate{u}{-})$, and it is clear that not both pairs can be sensitive
    at the same time.

    If $(\mate{u}{-},\mate{u}{-})$ is sensitive, then the cycle of $F$ containing
    $\mate{u}{-}$ contains at least four vertices which are not heads in
    $C_1AD^+$. Consequently, the pair $(\mate{u}{-},\mate{u}{-})$ is 4-free, and
    Lemma~\ref{l:dep} implies $\prob{C_1AD^+} \geq 79/80 \cdot 0.5/256
    > 0.49/256$. 

    On the other hand, if $(v_+,\mate{u}{-})$ is sensitive, we know that
    $d_{C_v}(v_+,\mate{u}{-})$ is odd, and our assumption that $\mate{u}{-} \neq
    v_{+2}$ implies that the pair $(v_+,\mate{u}{-})$ is 2-free. By
    Lemma~\ref{l:dep}, $\prob{C_1AD^+} \geq 3/4 \cdot 0.5/256 =
    0.375/256$. As this estimate is weaker than the preceding one,
    $C_1AD^+$ is guaranteed to have probability at least $0.375/256$.
    Symmetrically, $\prob{AC_1D^-} \geq 0.375/256$.

    So far, we have accumulated a probability of $87.95/256$. The
    missing bit can be supplied by the event $C_1C_2D^+$ of weight 10
    (Figure~\ref{fig:nochord-ll-rr}(c)). Since $u_-$ and $u_+$ do not
    have mates on $Z$, any sensitive pair will involve only the
    vertices $\mate{u}{-}$, $\mate{u}{+}$ and $v_+$, and it is not hard to check
    that there will be at most two such pairs. Since $\mate{u}{-} \neq
    v_{+2}$, each of these pairs is 1-free. If one of them is 2-free,
    then $\prob{C_1C_2D^+} \geq 1/4\cdot 0.25/256 > 0.06/256$ by
    Lemma~\ref{l:dep}, which is more than the amount missing to
    $88/256$. 

    We may thus assume that none of these pairs is 2-free. This
    implies that $(v_+,\mate{u}{-})$ is not a sensitive pair, as $d_{C_v}$
    would have to be odd and strictly between 1 and 3. Thus, there are
    only two possibilities: (a) $C_1C_2D^+$ is covered by
    $(\mate{u}{-},\mate{u}{+})$ and $(\mate{u}{+},\mate{u}{-})$, or (b) it is covered by
    $(v_+,\mate{u}{+})$ and $(\mate{u}{+},\mate{u}{-})$. The former case corresponds to
    $\mate{u}{+}$ and $\mate{u}{-}$ being contained in a cycle $W$ of $F$ of length
    4, which is impossible by the choice of $F$. In the latter case,
    $\mate{u}{+}$ and $\mate{u}{-}$ are contained in $C_v$; in fact, $\mate{u}{+} = v_{+3}$
    and $\mate{u}{-} = v_{+5}$. Although Lemma~\ref{l:dep} does not give us a
    nonzero bound for $\prob{C_1C_2D^+}$, we can get one by exploiting
    the fact that $G$ is triangle-free. Since $v_{+2}v_{+4}\notin
    E(M)$, the probability that both $v_{+2}$ and $v_{+4}$ are tails
    with respect to the random situation $\sigma$ is $1/4$, and these
    events are independent of orientations of the other edges of
    $G$. Thus, the probability that $\sigma$ weakly conforms to the
    template for $C_1C_2D^+$ and $v_{+2},v_{+4}$ are tails is $1/2^7 =
    2/256$. Under this condition, $\sigma$ will conform to the
    template with probability $1/2^5$ (a factor $1/2$ for each symbol
    in the diagram). Consequently, $\prob{C_1C_2D^+} > 0.06/256$,
    again a sufficient amount.
  \end{xcase}

  \begin{figure}
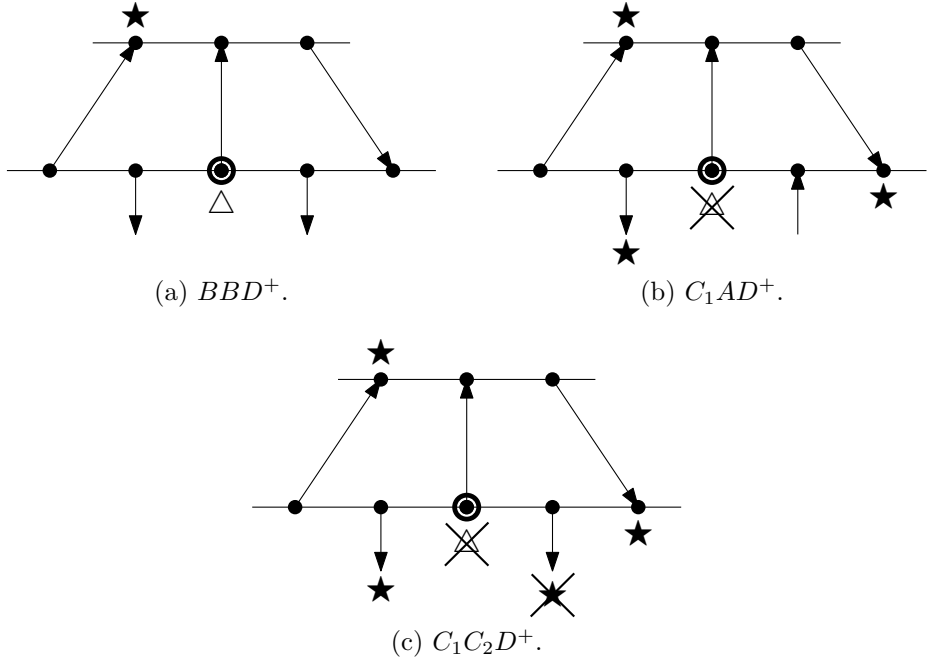

    \centering
    \sfigdef{67}
    \hf\sfigtop{66}{$BBD^+$.}\hf\sfigtop{67}{$C_1AD^+$.}\hf\\
    \hf\sfig{68}{$C_1C_2D^+$.}\hf
    \caption{Some of the events used in Case~\ref{case:nochord-ll-rr}
      of the proof of Lemma~\ref{l:nochord}.}
    \label{fig:nochord-ll-rr}
  \end{figure}

  \begin{xcase}{$M$ includes the edge $u_{-2}v_+$ but not
      $u_{+2}v_-$.}\label{case:nochord-ll}%
    As in the previous case, $\prob{BBD^-} \geq 2/256$. Consider the
    weakly regular event $BBD^+$ (Figure~\ref{fig:nochord-ll}(a)). Since
    $\mate{u}{-2} = v_+\in \sigma^1$ for any $\sigma\in BBD^+$, we have
    $q(BBD^+) = 0$. By Lemma~\ref{l:dep}, $\prob{BBD^+} \geq 2/256$.

    The event $BBD^0$ is also weakly regular, and it is not hard to
    see that $q(BBD^0) \leq 1/10$ (using the fact that the length of
    $C_v$ is at least 5). Lemma~\ref{l:dep} implies that $\prob{BBD^0}
    \geq 98/100 \cdot 2/256 = 1.96/256$.
    
    Each of the events $BAD^0$ (Figure~\ref{fig:nochord-ll}(b)),
    $BAD^+$ and $BAD^-$ is regular and has weight 9. By
    Lemma~\ref{l:dep}, it has probability at least
    $0.5/256$. Furthermore, $\prob{ABD^-} \geq 1/256$, also by
    regularity. So far, we have shown that $\prob{u\in I}\geq
    87.66/256$. As in the previous case, this enables us to assume
    that $\mate{u}{-}$ and $\mate{u}{+}$ are not vertices of $Z$. Furthermore, it
    may be assumed that $\mate{u}{-} \neq v_{+2}$, for otherwise $\eps(u) =
    -1$ and the current estimate on $\prob{u\in I}$ is sufficient.

    If $M$ includes the edge $u_-v_{-2}$, then $AC_1D^-$ is regular
    and $\prob{AC_1D^-}\geq 0.5/256$, which would make the total
    probability exceed $88/256$. Let us therefore assume the contrary.

    The event $C_1AD^-$ is covered by $\pair{\mate{u}{-}}{v_-}2$ and
    $q(C_1AD^-) = 0$, so the probability of $C_1AD^-$ is at least
    $3/4\cdot 0.25/256$. Similarly, $C_1AD^+$ is covered by
    $(v_+,\mate{u}{-})$. Suppose for a moment that this pair is 2-free; we
    then get $\prob{C_1AD^+}\geq 3/4\cdot 0.25/256$. The event
    $C_1AD^0$ is covered by $(v,\mate{u}{-})$ and $(\mate{u}{-},v)$. Our assumptions
    imply for each of the pairs that it is 2-free. By
    Lemma~\ref{l:dep}, $\prob{C_1AD^0} \geq 1/2\cdot 0.25/256$. The
    contribution we have obtained from $C_1AD^+\cup C_1AD^-\cup
    C_1AD^0$ is at least $0.5/256$, which is sufficient to complete
    the proof in this subcase.
    
    \begin{figure}
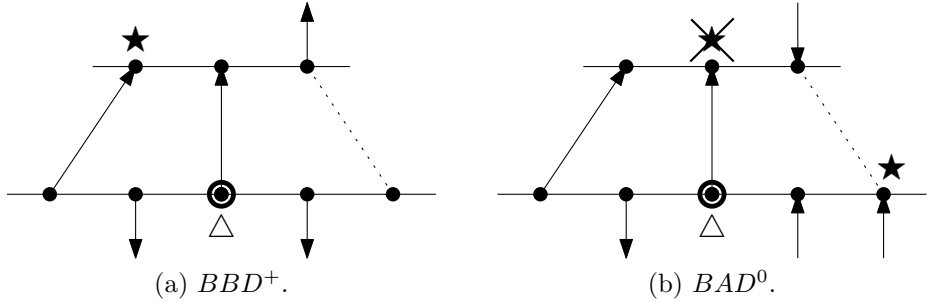

      \centering
      \hf\sfig{69}{$BBD^+$.}\hf\sfig{70}{$BAD^0$.}\hf
      \caption{Some of the events used in Case~\ref{case:nochord-ll} of
        the proof of Lemma~\ref{l:nochord}.}
      \label{fig:nochord-ll}
    \end{figure}

    It remains to consider the possibility that $(v_+,\mate{u}{-})$ is not
    2-free in the diagram for $C_1AD^+$. It must be that the path
    $v_+C_v\mate{u}{-}$ includes $\mate{v}{-}$ and has length 3. The probability
    bound for $C_1AD^+$ is now reduced to $1/2\cdot
    0.25/256$. However, now, $C_1AD^0$ is covered by $(\mate{u}{-},v)$, and
    we find that $\prob{C_1AD^0} \geq 3/4\cdot 0.25/256$. In other
    words,
    \begin{equation*}
      \prob{C_1AD^+\cup C_1AD^-\cup C_1AD^0} \geq 0.5/256
    \end{equation*}
    as before.
  \end{xcase}
  
  By symmetry, it remains to consider the following case. Note that
  our assumption that the set $\Setx{u_-,u_+,v_-,v_+}$ is independent
  remains in effect.

  \begin{xcase}{$G$ contains no edge from the set
      $\Setx{u_{-2},u_{+2}}$ to $\Setx{v_-,v_+}$.}%
    Consider the weakly regular event $BBD^+$
    (Figure~\ref{fig:nochord-final}). As before, the fact that
    $\size{V(Z)}\geq 5$ if $Z$ is odd, together with
    Observation~\ref{obs:q}(ii), implies that $q(BBD^+) \leq
    1/4$. Since the event has weight 7, $\prob{BBD^+} \geq 1.9/256$ by
    Lemma~\ref{l:dep}. We get the same estimate for $BBD^-$ and
    $BBD^0$.

    \begin{figure}
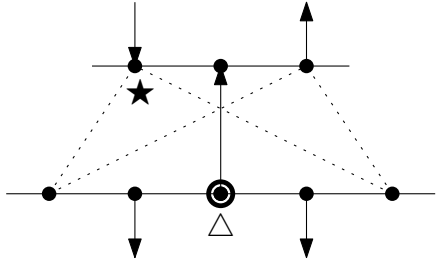

      \centering \hf\fig{71}\hf
      \caption{The event $BBD^+$ used in the final part of the proof of
        Lemma~\ref{l:nochord}.}
      \label{fig:nochord-final}
    \end{figure}

    Since $u_-$ is not adjacent to either of $v_-$ and $v_+$, the event
    $ABD^+$ is valid. It is regular, so $\prob{ABD^+} \geq 0.5/256$. The
    same applies to the events $ABD^-$, $ABD^0$, $BAD^+$, $BAD^-$ and
    $BAD^0$. Thus, the probability of the union of these six events is
    at least $3/256$. Together with the other events described so far,
    the probability is at least $87.9/256$. As in the previous cases,
    this means that we may assume that the mate of $u_+$ is not
    contained in $Z$, for otherwise we would obtain a further $0.8/256$
    from the event $E^\pm$ and reach the required amount.

    Since the length of $C_v$ is at least 5, $\mate{u}{+}$ is not adjacent to
    both $v_-$ and $v_+$. Suppose that it is not adjacent to $v_+$ (the
    other case is symmetric). Then $AC_1D^+$ is covered by the pair
    $\pair{v_+}{\mate{u}{+}}2$. Hence, $\prob{AC_1D^+} \geq 3/4\cdot 0.5/256 =
    0.375/256$. The total probability of $u\in I$ is therefore larger
    than $88/256$, which concludes the proof.
  \end{xcase}
\end{proof}


\section{Analysis: $uv$ is a chord}
\label{sec:chord}

In the present section, we continue the analysis of
Section~\ref{sec:nochord}, this time confining our attention to the
case where $uv$ is a chord of $Z$. Although this case is more
complicated, one useful simplification is that by
Observation~\ref{obs:q}(i), we now have $q(\Delta) = 0$ for any
template $\Delta$. In particular, $\prob{E^\pm} \geq 16/256$, which
implies
\begin{equation*}
  \prob{E^0 \cup E^- \cup E^+ \cup E^\pm} \geq \frac{80}{256}.
\end{equation*}
Roughly speaking, since the probability needed to prove
Theorem~\ref{t:main} is $88/256$, we need to find events in $\Sigma$
whose total probability is at least $8/256$. However, like in
Section~\ref{sec:nochord}, we may actually require a higher
probability or be satisfied with a lower one, depending on the type of
the vertex. The surplus probability will be used to compensate for the
deficits in Section~\ref{sec:phase5}.

\begin{table}
  \centering
  \begin{tabular}{c|m{.7\textwidth}|c}
    type of $u$ & \centering condition & $\eps(u)$\\\hline
    I & the path $v_-vv_+$ is contained in a
    4-cycle in $G$, neither the path $u_-uu_+$ nor the edge $uv$ are
    contained in a 4-cycle, and $u$ is not of types Ia, Ib, Ia$^*$ or Ib$^*$
    (see text) & $-0.5$ \\\hline
    Ia & $\size{uZv} = 4$ and $M$ includes
    the edges $u_{+2}v_+$, $u_{-2}v_-$, 
    while $u_+v_{+2}\notin E(M)$ & $-2$ \\\hline
    Ib & $\size{uZv} = 4$ and $M$ includes
    the edges $u_{+2}v_+$, $u_{-2}v_-,u_+v_{+2}$, & $-1.5$ \\\hline
    II & $\size{uZv} = 4$, $\size{vZu} \geq 7$ and $M$ includes all of the edges
    $u_-v_{+2}$, $u_{-2}v_+$, $u_{-3}u_+$, 
    while $v_{+3}v_-\notin E(M)$ & $-0.125$ \\\hline
    IIa & $\size{uZv} = 4$, $\size{vZu} = 6$, and $M$ includes all of the
    edges $u_{-2}v_+$, $u_{-3}u_+$ and $u_-u_{-4}$, & $-0.5$ \\\hline
    III & $\size{uZv} = 4$, $\size{vZu} = 8$ and $M$ includes all
    of the edges $u_{-2}v_+$, $u_{-3}u_+$, $v_{+3}v_-$ and 
    $u_-u_{-4}$ & $-0.125$
  \end{tabular}
  \caption{The type of a deficient vertex $u$ provided that $uv$ is a
    chord of $Z$, and the associated value $\eps(u)$.}
  \label{tab:chord-type}
\end{table}

Recall that at the beginning of Section~\ref{sec:nochord}, we defined
deficient vertices of type 0, and we associated a number $\eps(u)$
with the vertex $u$ provided that $uv$ is not a chord of a cycle of
$F$. We are now going to provide similar definitions for the opposite
case, introducing a number of new types of deficient vertices.

Suppose that $uv$ is a chord of $Z$ which is not contained in any
4-cycle of $G$. The vertex $u$ is \emph{deficient} if it satisfies one
of the conditions in Table~\ref{tab:chord-type}. (See the
illustrations in Figure~\ref{fig:deficient}.) Since the conditions are
mutually exclusive, this also determines the \emph{type} of the
deficient vertex $u$.

We now extend the definition to cover the symmetric
situations. Suppose that $u$ satisfies the condition of type II when
the implicit orientation of $Z$ is replaced by its reverse --- which
also affects notation such as $u_+$, $uZv$ etc. In this case, we say
that $u$ is deficient of type II$^*$. (As seen in
Figure~\ref{fig:symmetric}, the picture representing the type is
obtained by a flip about the vertical axis.) The same notation is used
for all the other types except types 0 and I. A type such as II$^*$ is
called the \emph{mirror} type of type II.

Note that even with this extension, the types of a deficient vertex
remain mutually exclusive. Furthermore, we have the following
observation which will be used repeatedly without explicit mention:
\begin{observation}
  If $u$ is deficient (of type different from 0), then its mate $v$ is
  not deficient.
\end{observation}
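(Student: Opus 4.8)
The plan is to show directly that no type of Table~\ref{tab:chord-type} (nor a mirror type, nor type~$0$) can be attributed to $v$. First a reduction: if $u$ is deficient of a type other than $0$, then by definition $uv$ is a chord of $Z$, so $v\in V(Z)$, $Z$ is also the cycle of $F$ through $v$, and $vu$ is a chord of it. Hence $v$ cannot be deficient of type~$0$, which is defined only when the mate-edge is not a chord; and since $uv$ lies in no $4$-cycle, the only thing left to exclude is that $v$ is deficient of a type in Table~\ref{tab:chord-type}, possibly a mirror type, which we assume for contradiction. Note also that the triangle-free and ``$uv$ in no $4$-cycle'' hypotheses force $\size Z\ge 8$ whenever $\size{uZv}=4$, since $\size{vZu}\le 3$ would produce a triangle or a $4$-cycle through $uv$.

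Now split according to the type of $u$. If $u$ is of type~I, Ia, Ib, Ia$^*$ or Ib$^*$, then the path $v_-vv_+$ lies in a $4$-cycle of $G$: for type~I this is part of the definition, and in the remaining cases $\size{uZv}=4$ (so $v_-=u_{+3}$) and the prescribed $M$-edge $u_{+2}v_+$ (or the mirror edge) together with the three consecutive $F$-edges $u_{+2}v_-,\,v_-v,\,vv_+$ closes a $4$-cycle $u_{+2}v_-vv_+$. Reading Table~\ref{tab:chord-type} with $v$ playing the role of $u$, every table type for $v$ either requires $v_-vv_+$ to lie in \emph{no} $4$-cycle (type~I), contradicting the above, or it prescribes $M$-edges near $v$ which, combined with the $M$-edges and the value $\size{vZu}$ forced by the type of $u$, are inconsistent: some vertex would be assigned two distinct mates, a cycle-length bound would be violated, or $F$ would acquire a short cycle excluded by the choice of $F$ (cf.\ Lemma~\ref{l:split-cycle}). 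This is exactly where the clauses ``$\ldots\notin E(M)$'' in $u$'s row (such as ``$u_+v_{+2}\notin E(M)$'' for type~Ia) are used: they rule out the corresponding mirror type for $v$.

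If instead $u$ is of type~II, IIa, III or a mirror thereof, then $\size{uZv}=4$ and the $M$-edges $u_{-2}v_+$ and $u_{-3}u_+$ are present (for type~II also $u_-v_{+2}$), and one checks that then $u_-uu_+$ lies in no $4$-cycle, so $v$ cannot be of type~I. If $v$ were of a non-mirror table type, its defining length condition would force $\size{vZu}=4$, hence $\size Z=8$, contradicting the cycle-length requirements of types~II, IIa, III and leaving only types~Ia, Ib for $v$ with $\size Z=8$, which are then killed by an $M$-edge clash exactly as in the previous family. If $v$ were of a mirror table type, the $M$-edges it forces --- computed in the reversed orientation of $Z$ and translated back --- clash with those forced by the type of $u$; in particular the clause ``$v_{+3}v_-\notin E(M)$'' of type~II and the presence of the edge $u_-u_{-4}$ in type~III are precisely what prevent $v$ from being of type~II$^*$ or III$^*$. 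This exhausts all cases.

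The genuine difficulty here is organisational rather than conceptual: one must inspect every pair (type of $u$, type of $v$), and for the mirror types carefully carry the conditions through the reversal of the orientation of $Z$. Each single check, however, is short, as it compares a bounded list of $M$-edges incident to vertices near $u$ and $v$ together with the length of $Z$, and the ``$\notin E(M)$'' clauses of Table~\ref{tab:chord-type} were introduced exactly to close those pairs that would otherwise survive.
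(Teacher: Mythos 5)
Your route---rule out type 0, then exclude every pair (type of $u$, type of $v$) by comparing the $M$-edges, path lengths and the choice of $F$ that the two type definitions force---is genuinely different from the paper's and can be made to work; the specific clauses you single out are indeed the decisive ones for the hardest pairs (e.g.\ $v_{+3}v_-\notin E(M)$ in type II is exactly what blocks $v$ from being of type II$^*$, and $u_-u_{-4}\in E(M)$ in type III blocks III$^*$ by giving $u_-$ two distinct mates). The problem is that, as written, the proposal is a plan rather than a proof: almost all of the pairwise checks are asserted (``one checks'', ``are inconsistent'', ``exactly as in the previous family''), and for this observation those checks \emph{are} the content. Moreover, your stated mechanisms demonstrably do not cover the first family you treat. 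If $u$ is of type I, its definition forces \emph{no} $M$-edges and no value of $\size{vZu}$, so ``the $M$-edges and the value $\size{vZu}$ forced by the type of $u$'' cannot clash with anything; what actually kills these pairs are the 4-cycle clauses of type I itself. For instance, if $v$ were of type Ia then $\size{vZu}=4$ and $\mate{u}{+}=v_{+2}=u_{-2}$, so $u_-uu_+$ lies in the 4-cycle $u_-uu_+u_{-2}$, contradicting the definition of type I for $u$; and if $v$ were of type II or II$^*$, the mates of $v_-$ and $v_+$ prescribed by $v$'s type leave $v_-$ and $v_+$ with no common neighbour besides $v$, so $v_-vv_+$ lies in no 4-cycle, again contradicting type I (equivalently, type I for $u$ forces $\mate{v}{-}=v_{+2}$ or $\mate{v}{+}=v_{-2}$, and \emph{that} is the forced edge one must play off against $v$'s type --- a derivation you never make). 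The sentence ``leaving only types Ia, Ib for $v$ with $\size Z=8$'' is also muddled: when $u$ is of type II, IIa or III, the length of $vZu$ is at least 6, so $v$ of type Ia or Ib (which needs $\size{vZu}=4$) is excluded by length just like types II, IIa, III for $v$.

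For comparison, the paper avoids the pairwise table altogether with two structural facts: for every deficient type other than 0 the path $u_-uu_+$ lies in no 4-cycle, which at once excludes types I, Ia, Ib and their mirrors for $v$ (each of them forces a 4-cycle through the path around the mate) and, applied symmetrically once $v$ is assumed deficient, for $u$ as well; for the remaining types II, IIa, III and mirrors the path $u_-uu_+$ lies in a 5-cycle, which together with the length conditions leaves only the pair (III, III$^*$), finally excluded because a neighbour of $u$ on $Z$ lies in a 4-cycle while no neighbour of $v$ on $Z$ does. If you want to keep your route, you should (a) add the type-I argument above, and (b) actually tabulate the remaining pairs, or give a uniform reason (mate clash, length clash, or Lemma~\ref{l:split-cycle}) verified for each of them, rather than asserting that one exists.
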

\begin{proof}
  Let $u$ be as stated. A careful inspection of
  Table~\ref{tab:chord-type} and Figure~\ref{fig:deficient} shows that
  the path $u_-uu_+$ is not contained in any 4-cycle. It follows that
  $v$ is not deficient of type I, Ia, Ib or their mirror
  variants. Suppose that $v$ is deficient. By symmetry, $u$ also does
  not belong to the said types, and hence the types of both $u$ and
  $v$ are II, IIa, III or the mirror variants. As seen from
  Figure~\ref{fig:deficient}, when $u$ is of any of these types, the
  path $u_-uu_+$ belongs to a 5-cycle in $G$. By symmetry again, the
  same holds for $v_-vv_+$. The only option is that $u$ belongs to
  type III and $v$ to III$^*$, or vice versa. But this is clearly
  impossible: if $u$ is of type III or III$^*$, then one of its
  neighbours on $Z$ is contained in a 4-cycle, and this is not the
  case for any neighbour of $v$ on $Z$. Hence, $v$ cannot be of type
  III or III$^*$. This contradiction shows that $v$ is not deficient.
\end{proof}

We will often need to apply the concept of a type to the vertex $v$
rather than $u$. This may at first be somewhat tricky; for instance,
to obtain the definition of `$v$ is of type IIa$^*$', one needs to
interchange $u$ and $v$ in the definition of type IIa in
Table~\ref{tab:chord-type} and then perform the reversal of the
orientation of $Z$. In this case, the resulting condition will be that
$\size{uZv} = 4$, $\size{vZu} = 6$ (here the two changes cancel each
other) and $M$ includes the edges $v_{+2}u_-$, $v_{+3}v_-$ and
$v_+v_{+4}$. To spare the reader from having to turn
Figure~\ref{fig:deficient} around repeatedly, we picture the various
cases where $v$ is deficient in Figure~\ref{fig:v-deficient}.

Table~\ref{tab:chord-type} also associates the value $\eps(u)$ with
each type. By definition, a type with an asterisk (such as II$^*$) has
the same value assigned as the corresponding type without an asterisk.

We now extend the function $\eps$ to all vertices of $G$. It has been
defined for all deficient vertices, as well as for all vertices whose
mate is contained in a different cycle of $F$. Suppose that $w$ is a
non-deficient vertex whose mate $w'$ is contained in the same cycle of
$F$. We set
\begin{equation*}
  \eps(w) =
  \begin{cases}
    -\eps(w') & \text{ if $w'$ is deficient,}\\
    0 & \text{ otherwise.}
  \end{cases}
\end{equation*}

\begin{figure}
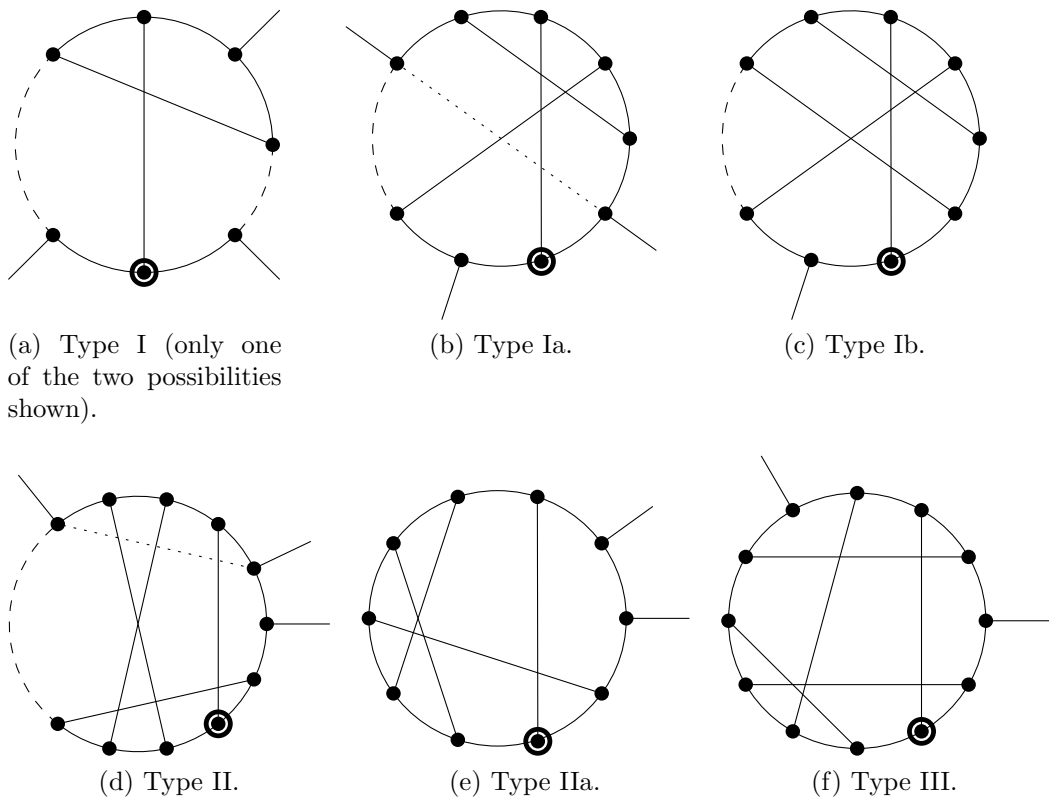

  \centering%
  \sfigdef{51}%
  \sfigtop{38}{Type I (only one of the two possibilities shown).}\hf
  \sfigtop{50}{Type Ia.}\hf\sfigtop{51}{Type Ib.}\hspace{8mm}\hspace*{0mm}\\
  \sfigdef{35}%
  \sfig{37}{Type II.}\hf\sfig{36}{Type IIa.}\hf
  \sfig{35}{Type III.}
  \caption{Deficient vertices.}
  \label{fig:deficient}
\end{figure}

\begin{figure}
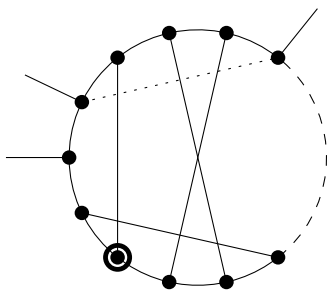

  \centering
  \hf\fig{72}\hf
  \caption{A deficient vertex $u$ of type II$^*$.}
  \label{fig:symmetric}
\end{figure}

\begin{figure}
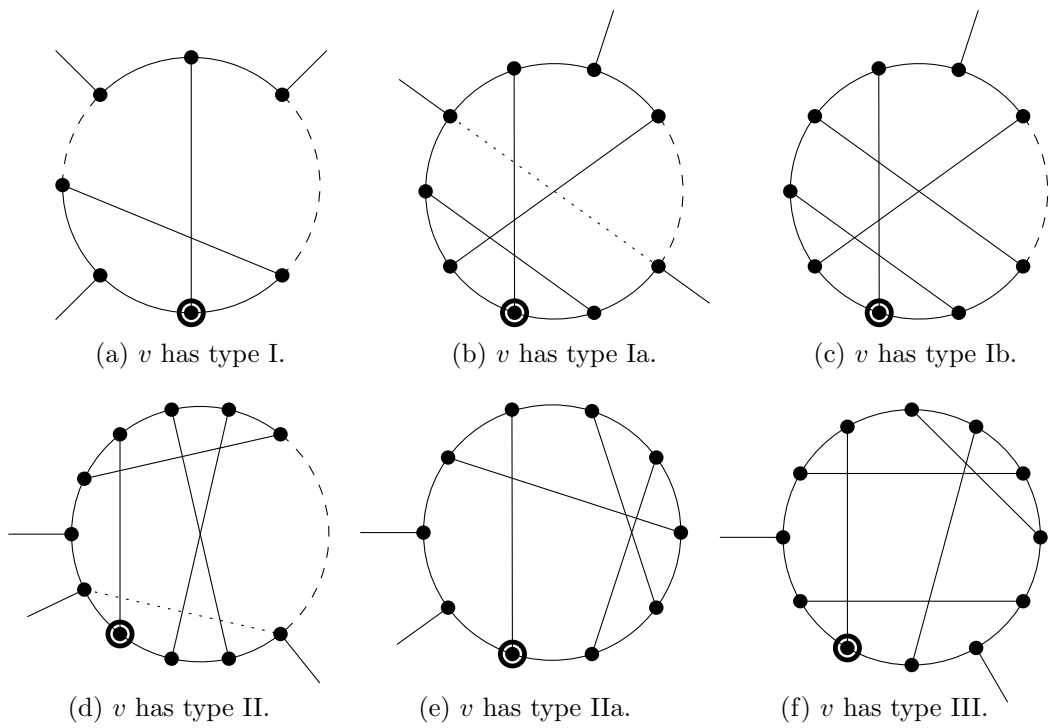

  \centering%
  \hspace{6mm}\sfigdef{90}%
  \sfig{88}{$v$ has type I.}\hf
  \sfig{89}{$v$ has type Ia.}\hf\sfigtop{90}{$v$ has type Ib.}\\
  \sfigdef{91}%
  \sfigtop{91}{$v$ has type II.}\hf\sfigtop{92}{$v$ has type IIa.}\hf
  \sfigtop{93}{$v$ has type III.}
  \caption{The situation when the vertex $v$ is deficient. As usual,
    the vertex $u$ is circled.}
  \label{fig:v-deficient}
\end{figure}

Our goal in this section is to prove the following proposition, which
is the main technical result of this paper. As in the proof of
Lemma~\ref{l:nochord}, we mark the end of each case by $\blacktriangle$;
furthermore, the end of each subcase is marked by $\triangle$.

\begin{proposition}\label{p:32}
  If $uv$ is a chord of $Z$, then for the total probability of the
  events in $\Sigma$ we have
  \begin{equation*}
    \prob{\bigcup\Sigma} \geq \frac{8+\eps(u)}{256}.
  \end{equation*}
\end{proposition}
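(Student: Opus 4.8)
The plan is to follow the same strategy as in the proof of Lemma~\ref{l:nochord}: partition the analysis according to the local structure of $G$ around the chord $uv$, and in each case exhibit a collection of pairwise disjoint valid events from $\Sigma$ (built from the left, right and upper templates $A,B,C_i,D^\pm,D^0$) that force $u$, then add up the lower bounds on their probabilities supplied by Lemma~\ref{l:dep}. Since $uv$ is a chord of $Z$, Observation~\ref{obs:q}(i) gives $q(\Delta)=0$ for every template, so the $q$-term drops out of Lemma~\ref{l:dep} entirely; the only complication left is the bookkeeping of sensitive pairs, and we only ever lose from linear pairs $\pair{x}{y}{k}$ (a factor $1-1/2^k$) or circular pairs $\pair{x}{x}{k}$ (a factor $1-1/(5\cdot 2^k)$). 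The target is $\prob{\bigcup\Sigma}\geq (8+\eps(u))/256$, where $\eps(u)\le 1$ and is as small as $-2$ for type Ia; so in the generic (non-deficient, $\eps(u)=0$) case we must reach $8/256$, in the worst deficient case only $6/256$, and when $uv$ lies in a $4$-cycle we must reach $9/256$.

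First I would dispose of the case where $uv$ is contained in a $4$-cycle (so $\eps(u)=1$): here one of $u_+,u_-$ is adjacent to one of $v_+,v_-$, the events $BBD^0$, $BBD^{\pm}$, $ABD^-$, $AC_1D^-$ and their mirror images become regular (infeasibility of a Phase-1 vertex kills $q$, which we already know is $0$), and their weights $5,7,7,8,\dots$ give contributions $8/256 + 2\cdot 2/256 + \dots$, comfortably exceeding $9/256$. Next, assuming $\{u_-,u_+,v_-,v_+\}$ is independent, I would split on how the mates $\mate u{-2},\mate u{+2}$ attach to $v_\pm$, exactly paralleling Cases~3--5 of Lemma~\ref{l:nochord} but now with the cycle $Z$ doing double duty (it contains both $u$ and $v$). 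The regular events $BBD^+,BBD^-,BBD^0$ (weight $7$) each contribute $2/256$; the events $ABD^\bullet$, $BAD^\bullet$ (weight $9$) contribute $0.5/256$ each; and when a short mate-edge like $u_-v_{-2}$ is present, the corresponding $A\cdot D^\bullet$ event becomes regular and over-delivers, which is precisely what lets us absorb the deficiency. The residual shortfalls are made up by the weight-$10$ events $C_1\cdot D^\bullet$, $C_1C_2D^\bullet$, whose sensitive pairs involve only $\mate u{-},\mate u{+},v_+$ and are $1$- or $2$-free by the defining conditions of the deficient types (e.g. $\mate u{-}\neq v_{+2}$), giving $3/4\cdot 0.25/256$ or $1/2\cdot 0.25/256$ apiece; in the one configuration where Lemma~\ref{l:dep} returns zero ($\mate u{+}=v_{+3}$, $\mate u{-}=v_{+5}$) I would argue directly from triangle-freeness that $v_{+2}v_{+4}\notin E(M)$, so $v_{+2},v_{+4}$ are independently tails with probability $1/4$, recovering a positive bound $>0.06/256$, just as in Case~\ref{case:nochord-ll-rr}.

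The bulk of the work — and the main obstacle — will be the deficient types Ia, Ib, II, IIa, III (and their mirror variants), where $\eps(u)$ is as low as $-2$ and the geometry of $Z$ near $uv$ is highly constrained ($\size{uZv}=4$, with prescribed mate-edges $u_{\pm2}v_\pm$, $u_{-3}u_+$, etc.). For each such type I would identify by hand a specific list of valid events from $\Sigma$ — typically $BBD^\bullet$ for the three choices of $\bullet$, plus a couple of $ABD^\bullet$/$BAD^\bullet$/$AC_1D^\bullet$ events whose validity I must check against the known non-edges (that $\{u_-,u_+,v_-,v_+\}$ is independent and that $uv$ and $u_-uu_+$ lie in no $4$-cycle) — verify disjointness (they disagree on some recorded choice: orientation of an $M$-edge, or membership in $\sigma^1$ or $\sigma^3$), and check that each event forces $u$ using the Algorithm~1 description. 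The delicate points are (i) confirming validity when two prescribed vertices happen to be adjacent, using the observation from Lemma~\ref{l:nochord} that a diagram can prescribe consistent orientations at both ends of such an edge; (ii) correctly enumerating the possibly-sensitive pairs (the sensitivity of $(x,y)$ depends on the ordered pair and on which cycle contains $x,y$, so pairs within $Z$ are often ruled out because $Z$ already carries a tail); and (iii) making the arithmetic close — since the margins here (reaching $6/256$ from a handful of weight-$7$ to weight-$10$ events) are tight, each case needs its own carefully chosen event list rather than a uniform argument. I expect this per-type casework, together with the mirror-symmetry reductions, to occupy the remainder of the section, structured as in Lemma~\ref{l:nochord} with $\blacktriangle$ ending each case and $\triangle$ each subcase.
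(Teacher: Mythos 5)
Your overall strategy (splitting on the local structure of $G$ around the chord, exhibiting disjoint events from $\Sigma$, and summing the bounds from Lemma~\ref{l:dep} with $q(\Delta)=0$ by Observation~\ref{obs:q}(i)) is the same as the paper's, but you have the bookkeeping of $\eps(u)$ inverted, and this is a genuine gap rather than a detail. The value $\eps(u)=1$ for ``$uv$ in a 4-cycle'' belongs to the \emph{non-chord} definition in Section~\ref{sec:nochord}; in the chord case, if $uv$ lies in a 4-cycle then neither $u$ nor $v$ is deficient, so the target is $8/256$, not $9/256$. More seriously, in the chord case $\eps(u)$ is \emph{positive} precisely when the mate $v$ is deficient, via $\eps(u)=-\eps(v)$, and can be as large as $+2$ (when $v$ has type Ia or Ia$^*$), so the hardest targets are up to $10/256$ and occur when $v$ is deficient. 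When $u$ itself is deficient the target drops to as little as $6/256$ and the shortfall is deliberately deferred to Phase~5. Your plan spends ``the bulk of the work'' on the deficient types of $u$ (the easy direction) and never addresses the subcases where $v$ is deficient: the paper's proof devotes much of its case analysis exactly to determining which deficient types $v$ can have given the structure of $E(M[U])$ (e.g.\ only Ia$^*$/Ib$^*$ in Case~2, only III/III$^*$ when $E(M[U])=\emptyset$, only II$^*$/IIa$^*$ when $E(M[U])=\Setx{u_{-2}v_+}$) and to harvesting the extra $0.5/256$ to $2/256$ of probability needed there. As written, your argument would aim at the wrong bound in precisely those cases.

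A secondary omission: several subcases cannot be closed by Lemma~\ref{l:dep} and triangle-freeness alone. The paper repeatedly uses the special choice of the 2-factor $F$ — the edge-cut condition of Theorem~\ref{t:ks} (to exclude $\size{\bd Z}\in\Setx{3,4}$ and thereby force certain chords or rule out ``tight'' sensitive pairs) and Lemma~\ref{l:split-cycle} (to forbid pairs of 5-cycles partitioning $V(Z)$ with too many edges between them). Your sketch mentions the known non-edges and the deficient-type conditions as the source of freeness of sensitive pairs, but without these structural exclusions a number of the tight subcases (e.g.\ in the analysis of $E(M[U])=\Setx{u_{-2}u_{+2}}$ and $E(M[U])=\Setx{u_{-2}v_-}$) do not close.
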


\begin{proof}
We distinguish a number of cases based on the structure of the
neighbourhood of $u$ in $G$.

\begin{figure}
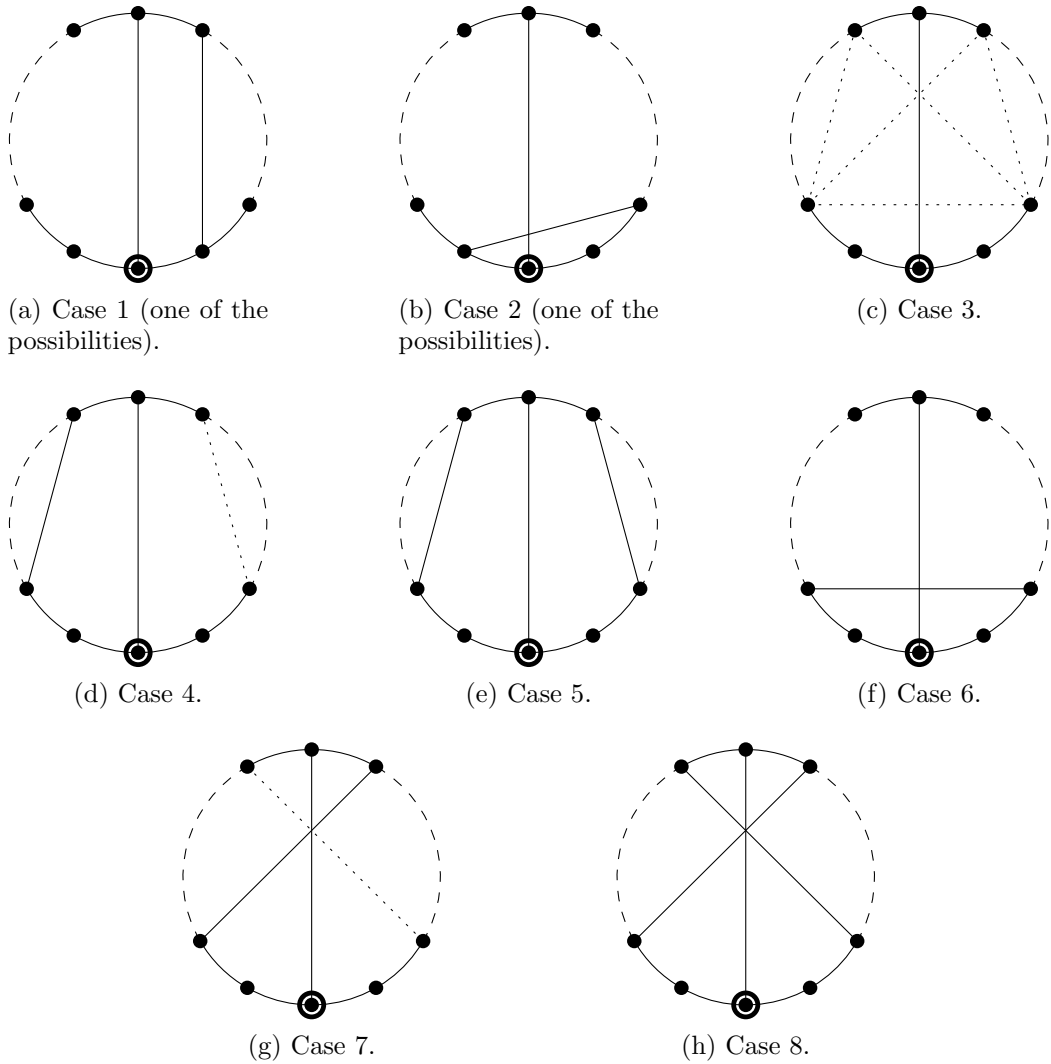

  \centering
  \sfig{41}{Case 1 (one of the possibilities).}
  \hf\sfig{42}{Case 2 (one of the possibilities).}
  \hf\sfig{43}{Case 3.}\\
  \sfig{44}{Case 4.}
  \hf\sfig{45}{Case 5.}
  \hf\sfig{46}{Case 6.}\\
  \hf\sfig{47}{Case 7.}
  \hf\sfig{48}{Case 8.}\hf
  \caption{The main cases in the proof of
    Proposition~\ref{p:32}. Relevant non-edges are represented by
    dotted lines, paths are shown as dashed lines.}
  \label{fig:cases}
\end{figure}

\setcounter{xcasehdr}0  
 
\begin{xcase}{The edge $uv$ is contained in a
    4-cycle.}\label{case:c4}%
    Observe that in this case, neither $u$ nor $v$ is deficient.

    Suppose that $uvv_-u_+$ is a 4-cycle (the argument in the other
    cases is the same). Consider first the possibility that $v_-u_+$
    is an edge of $M$. The event $BBD^0$ is (valid and) regular. By
    Lemma~\ref{l:dep}, $\prob{BBD^0} \geq 4/256$. Since this lower
    bound increases to $8/256$ if $u_-v_+$ is an edge of $M$ (and
    since $v$ is not deficient), we may actually assume that this is
    not the case. Consequently, $\prob{BBD^-} \geq 4/256$ as $BBD^-$
    is regular. The total contribution is $8/256$ as desired.

    We may thus assume that $v_-u_+$ is an edge of $F$ and no edge of
    $M$ has both endvertices in $\Setx{u_-,u_+,v_-,v_+}$. Since the
    events $BBD^0$ and $BBD^-$ are regular, we have $\prob{BBD^0\cup
      BBD^-} \geq 4/256$. 

    A further probability of $4/256$ is provided by the regular events
    $BAD^0$ and $BAD^-$. Indeed, although the template $BAD^0$ has
    weight 8, which would only yield $\prob{BAD^0}\geq 1/256$ by
    Lemma~\ref{l:dep}, the estimate is improved to $2/256$ by the fact
    that the associated diagram has a removable symbol at $v$. The
    same applies to the event $BAD^-$. We conclude
    \begin{equation*}
      \prob{BBD^0\cup BBD^-\cup BAD^0\cup BAD^-} \geq 8/256
    \end{equation*}
    as required.
  \end{xcase}

  We will henceforth assume that $uv$ is not contained in a
  4-cycle. Note that this means that the set $\Setx{u_-,u_+,v_-,v_+}$
  is independent. Consider the regular event $BBD^+$
  (Figure~\ref{fig:bb}). By Lemma~\ref{l:dep}, we have
  \begin{equation*}
    \prob{BBD^+} \geq \frac2{256}.
  \end{equation*}
  The same applies to the events $BBD^0$ and $BBD^-$. Thus, in the
  subsequent cases, it suffices to find additional events of total
  probability at least $(2+\eps(u))/256$.

  \begin{figure}
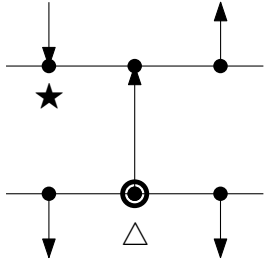

    \centering
    \fig{18}
    \caption{The event $BBD^+$.}
    \label{fig:bb}
  \end{figure}
  

  \begin{xcase}{The path $u_-uu_+$ is contained in a 4-cycle.}%
    \label{case:c4-down}%
    Suppose that $u_-uu_+u_{+2}$ is such a 4-cycle. (The other case is
    symmetric.) Consider the events $C_1AD^+$ and $BAD^+$. Since the
    condition of Case~\ref{case:c4} does not hold, and by the
    assumption that $G$ is triangle-free, the set $\Setx{u_+,v_-,v_+}$
    is independent in $G$. Furthermore, each of the events is regular
    and by Lemma~\ref{l:dep}, each of them has probability at least
    $1/256$. Thus, it remains to find an additional contribution of
    $\eps(u)$.

    We distinguish several subcases based on the deficiency and type
    of the vertex $v$. Since $u_-uu_+$ is contained in a 4-cycle, $v$
    is either not deficient, or is deficient of type I, Ia, Ib, Ia$^*$
    or Ib$^*$.

    \begin{xxcase}{$v$ is not deficient.}%
      In this subcase, $\eps(u) \leq 0$, so there is nothing to prove.
    \end{xxcase}

    \begin{xxcase}{$v$ is deficient of type I.}\label{subc:c4-down-1}%
      By the definition of type I, both of the following conditions
      hold:
      \begin{itemize}
      \item $u_+v_{+2} \notin E(M)$ or $\size{uZv} \geq 5$,
      \item $u_-v_{-2} \notin E(M)$ or $\size{vZu} \geq 5$.
      \end{itemize}
      Moreover, we have $\eps(u)=0.5$.

      We may assume that $M$ includes the edge $u_{-2}v_-$, for
      otherwise the event $ABD^-$ is regular (see
      Figure~\ref{fig:c4-down-1}(a)) and has probability at least
      $0.5/256$ as required.

      The event $ABD^+$ (Figure~\ref{fig:c4-down-1}(b)) is covered by
      the pair $(v_+,u_{-2})$. Consequently, we may assume that
      $\size{vZu} = 4$: otherwise the pair is 1-free, and since the
      event has weight 8, we have $\prob{ABD^+}\geq 0.5/256$ by
      Lemma~\ref{l:dep}.

      By a similar argument applied to the event $C_1AD^-$, we infer
      that $\size{uZv}=4$. Thus, the length of $Z$ is 8 and the
      structure of $G[V(Z)]$ is as shown in
      Figure~\ref{fig:c4-down-2}(a). The regular event $C_1C_2D^+$
      (Figure~\ref{fig:c4-down-2}(b)) has probability at least
      $0.5/256$, which is sufficient. This concludes the present
      subcase.

      \begin{figure}
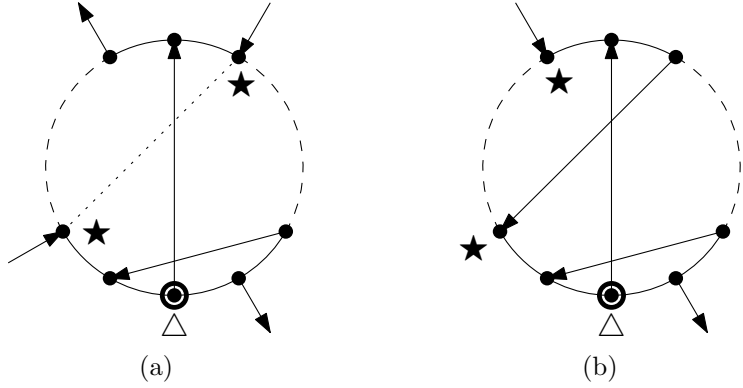

        \centering
        \hf\sfig{78}{}\hf\sfig{77}{}\hf
        \caption{Subcase~\ref{subc:c4-down-1} of the proof of
          Proposition~\ref{p:32}: (a) The event $ABD^-$ if
          $u_{-2}v_-\notin E(M)$. (b) The event $ABD^+$.}
        \label{fig:c4-down-1}
      \end{figure}

      \begin{figure}
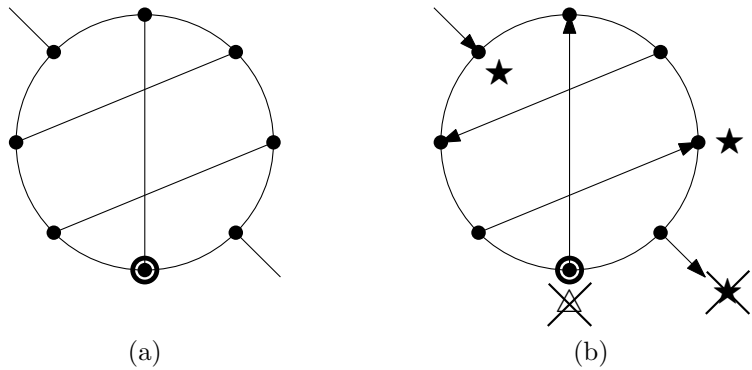

        \centering
        \sfigdef{94}
        \hf\sfigtop{52}{}\hf\sfigtop{94}{}\hf
        \caption{(a) A configuration in Subcase~\ref{subc:c4-down-1}
          of the proof of Proposition~\ref{p:32}. (b) The event
          $C_1C_2D^+$.}
        \label{fig:c4-down-2}
      \end{figure}
    \end{xxcase}

    \begin{xxcase}{$v$ is deficient of type Ia, Ib, Ia$^*$ or
        Ib$^*$.}%
      By symmetry, we may assume that $v$ is either of type Ia$^*$ (if
      $u_{-2}v_-$ is not an edge of $M$) or Ib$^*$
      (otherwise). Accordingly, we have either $\eps(u) = 2$ or
      $\eps(u) = 1.5$.

      The regular event $C_1C_2D^+$ provides a contribution of
      $1/256$. If $u_{-2}v_- \notin E(M)$ (thus, $v$ is of type Ia$^*$
      and $\eps(u) = 2$), then the event $ABD^-$ is also regular
      (including when $\size{vZu}=4$) and $\prob{ABD^-} \geq 1/256$, a
      sufficient amount.

      It remains to consider the case that $u_{-2}v_-\in E(M)$. The
      required additional probability of $0.5/256$ is supplied by the
      event $ABD^+$, which is covered by the 1-free pair
      $(v_+,u_{-2})$. 
    \end{xxcase}

    The discussion of Case~\ref{case:c4-down} is complete.
  \end{xcase}
  
  From here on, we assume that none of the conditions of Cases 1 and 2
  holds. In particular, $v$ is not deficient of type I, Ia, Ib or
  their mirror types. We distinguish further cases based on the set of
  edges induced by $M$ on the set
  \begin{equation*}
    U = \Setx{u_{-2},u_{+2},v_-,v_+}.
  \end{equation*}
  Note that the length of the paths $uZv$ and $vZu$ is now assumed to
  be at least 4. We call a path \emph{short} if its length equals 4.


  \begin{xcase}{$E(M[U]) = \emptyset$.}\label{case:empty}%
    We claim that if $v$ is deficient, then its type is III or
    III$^*$. Indeed, for types I, Ia, Ib and their mirror types,
    $u_-uu_+$ would be contained in a 4-cycle and this configuration
    has been covered by Case~\ref{case:c4-down}. For types II, IIa and
    their mirror variants, $U$ would not be an independent set. Since
    type 0 is ruled out for trivial reasons, types III and III$^*$ are
    the only ones that remain. The only subcase compatible with these
    types is Subcase~\ref{subc:0-leftshort}; in the other subcases,
    $v$ is not deficient and we have $\eps(u) \leq 0$. This will
    simplify the discussion in the present case.
    
    We begin by considering the event $ABD^-$. By the assumptions, it
    is valid. Since neither $(u_{-2},v_-)$ nor its reverse is a
    sensitive pair, the event is regular. Thus, $\prob{ABD^-} \geq
    0.5/256$. By symmetry, we have $\prob{BAD^+} \geq 0.5/256$.

    We distinguish several subcases, in each of which we try to
    accumulate further $(1+\eps(u))/256$ worth of probability.


    \begin{xxcase}{None of $uZv$ and $vZu$ is short.}%
      \label{subc:0-noshort}%
      Consider the event $ABD^0$. By the assumptions, it is valid and
      covered by $(v_+, u_{-2})$. Since $vZu$ is not short and the
      diagram of $ABD^0$ contains only one outgoing arc (namely
      $u_+\mate{u}{+}$), the pair is 1-free. By Lemma~\ref{l:dep},
      $\prob{ABD^0} \geq 1/2 \cdot 0.5/256 = 0.25/256$. By symmetry,
      $\prob{BAD^0} \geq 0.25/256$.

      The argument for $ABD^0$ also applies to the event $ABD^+$
      (whose diagram has two outgoing arcs), unless the vertex set of
      the path $vZu$ is $\Setx{v,v_+,\mate{u}{+},\mate{v}{-},u_-,u}$ (in which case
      we get the two possibilities in Figure~\ref{fig:indep-abd0}). If
      this does not happen, then we obtain a contribution of at least
      $0.25/256$ again.

      \begin{figure}
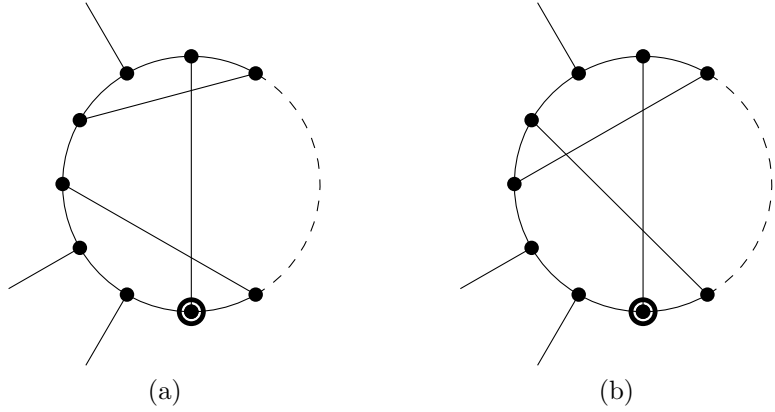
 \centering 
        \hf\sfig{22}{}\hf 
        \sfig{23}{}\hf
        \caption{Two cases where the event $ABD^+$ cannot be used in
          Subcase~\ref{subc:0-noshort} of the proof of
          Proposition~\ref{p:32}.}
        \label{fig:indep-abd0}
      \end{figure}

      Let us examine the exceptional case in
      Figure~\ref{fig:indep-abd0}(a) (i.e., $\mate{u}{+} = u_{-3}$ and $\mate{v}{-}
      = v_{+2}$). The event $C_1C_1D^+$ is covered by
      $\pair{\mate{u}{-}}{\mate{u}{-}}4$. By Lemma~\ref{l:dep}, $\prob{C_1C_1D^+}
      \geq 79/80 \cdot 1/256 > 0.98/256$.

      Consider now the situation of
      Figure~\ref{fig:indep-abd0}(b). The event $AAD^+$ is valid,
      since $\Setx{u_{-2},u_{+2},v_+}$ is an independent set by
      assumption, and it is regular. We infer that $\prob{AAD^+} \geq
      0.25/256$.
      
      To summarize the above three paragraphs, we proved
      \begin{equation*}
        \prob{ABD^+ \cup C_1C_1D^+ \cup AAD^+} \geq 0.25/256.
      \end{equation*}
      By symmetry, we have
      \begin{equation*}
        \prob{BAD^- \cup C_1C_1D^- \cup AAD^-} \geq 0.25/256.
      \end{equation*}
      Together with the events $ABD^0$ and $BAD^0$ considered earlier,
      this makes for a total contribution of at least $1/256$. As
      noted at the beginning of Case~\ref{case:empty}, $\eps(u) \leq
      0$, so this is sufficient.
    \end{xxcase}


    \begin{xxcase}{The path $vZu$ is short, but $uZv$ is
        not.}\label{subc:0-leftshort}%

      In this subcase, $v$ may be deficient of type III$^*$, in
      which case $\eps(u) = 0.125$; otherwise, $\eps(u)\leq 0$.

      The event $BAD^-$ is covered by the pair $(u_{+2},v_-)$ which is
      1-free unless $\mate{v}{+}$ and $\mate{u}{-}$ are the only internal vertices
      of the path $u_{+2}Zv_-$. However, this situation would be
      inconsistent with our choice of $F$, since $\bd Z$ would have
      size 4. (Recall that $\bd Z$ is the set of edges of $G$ with one
      end in $V(Z)$.) Consequently, $\prob{BAD^-} \geq 1/2 \cdot
      0.5/256 = 0.25/256$. Moreover, if $\mate{u}{+}$ (which is a tail in
      $BAD^-$) is contained in $u_{+2}Zv_-$, then $\prob{BAD^-}\geq
      0.5/256$.

      The same discussion applies to the event $BAD^0$. In particular,
      if $\mate{u}{+} \in V(u_{+2}Zv_-)$, then the probability of the union
      of these two types is at least $1/256$. This is a sufficient
      amount, unless $v$ is deficient of type III$^*$, in which case
      a further $0.25/256$ is obtained from the regular event $AAD^-$.

      We may thus assume that $\mate{u}{+} \notin V(u_{+2}Zv_-)$ (so $v$ is
      not deficient). The event $AC_1D^-$ is then covered by
      $\pair{\mate{u}{+}}{\mate{u}{+}}3$ (we are taking into account the arc
      incident with $v_+$) and hence $\prob{AC_1D^-} \geq 39/40 \cdot
      0.25/256 > 0.24/256$ by Lemma~\ref{l:dep}.

      The event $AC_2D^-$ is covered by the pair
      $\pair{u_{+2}}{v_-}{1}$ and has probability at least $1/2 \cdot
      0.0625/256 > 0.03/256$. We claim that $\prob{BAD^-\cup BAD^0\cup
        AAD^-} \geq 0.75/256$. Since the total amount will exceed
      $1/256$, this will complete the present subcase.

      Suppose first that $\mate{v}{+}\in V(uZv)$. Then the event $BAD^0$ is
      regular and $\prob{BAD^0} \geq 0.5/256$. In addition, $BAD^-$
      has only one sensitive pair $(u_{+2},v_-)$. This pair is 1-free,
      for otherwise $\mate{v}{+}$ and $\mate{u}{-}$ would be the only internal
      vertices of the path $u_{+2}Zv_-$, and $Z$ would be incident
      with exactly four non-chord edges of $M$, a contradiction with
      the choice of $F$. Thus, $\prob{BAD^-} \geq 0.25/256$ and the
      claim is proved.

      Let us therefore assume that $\mate{v}{+}\notin V(uZv)$. We again
      distinguish two possibilities according to whether $\mate{u}{-}$ is
      contained in $uZv$ or not. If $\mate{u}{-}\in V(uZv)$, then
      $\prob{BAD^-} \geq 1/2 \cdot 0.5/256 = 0.25/256$ as $BAD^-$ is
      covered by $\pair{u_{+2}}{v_-}1$. Similarly, $\prob{BAD^0} \geq
      0.25/256$. The event $AAD^-$ is regular of weight 10, whence
      $\prob{AAD^-} \geq 0.25/256$. The total probability of these
      three events is at least $0.75/256$ as claimed.

      To complete the proof of the claim, we may assume that
      $\mate{u}{-}\notin V(uZv)$. The only possibly sensitive pair of $BAD^-$
      and $BAD^0$ is now 2-free, implying a probability bound of $3/4
      \cdot 0.5/256$ for each event. Thus, $\prob{BAD^-\cup BAD^0}
      \geq 0.75/256$, finishing the proof of the claim and the whole
      subcase. 
    \end{xxcase}
      

    \begin{xxcase}{Both $vZu$ and $uZv$ are short.}%
      In this subcase, $Z$ is an 8-cycle; by our assumptions, it has
      only one chord $uv$. Recall also that in this subcase, $\eps(u)
      \leq 0$.
        
      Consider the event $AC_1D^-$. Since it is covered by
      $\pair{\mate{u}{+}}{\mate{u}{+}}3$, we have $\prob{AC_1D^-} \geq 39/40 \cdot
      0.25/256 > 0.24/256$ by Lemma~\ref{l:dep}. By symmetry,
      $\prob{C_1AD^+} \geq 0.24/256$, so the total probability so far
      is $0.48/256$.

      Suppose now that the vertices $\mate{u}{+}$ and $\mate{u}{-}$ are located on
      different cycles of $F$. By Lemma~\ref{l:dep}, $\prob{C_1C_1D^0}
      \geq 39/40\cdot 0.5/256 > 0.48/256$. Similarly,
      $\prob{C_1C_1D^+} \geq 77/80 \cdot 0.5/256 > 0.48/256$, which
      makes for a sufficient contribution.

      We may thus assume that $\mate{u}{+}$ and $\mate{u}{-}$ are on the same cycle,
      say $Z'$, of $F$. Suppose that they are non-adjacent, in which
      case $C_1C_1D^0$ is covered by $\pair{\mate{u}{+}}{\mate{u}{-}}2$ and
      $\pair{\mate{u}{-}}{\mate{u}{+}}2$, and its probability is at least $1/2 \cdot
      0.5/256 = 0.25/256$. If neither $\mate{v}{-}$ nor $\mate{v}{+}$ are on $Z'$,
      then the same computation applies to $C_1C_1D^+$ and
      $C_1C_1D^-$, so the total probability accumulated so far is
      $(0.48 + 0.25 + 0.25 + 0.25)/256 > 1/256$ by
      Lemma~\ref{l:dep}. We may thus assume, without loss of
      generality, that $\mate{v}{+} \in V(\mate{u}{+}Z'\mate{u}{-})$. Under this
      assumption, $C_1C_1D^-$ is covered by $\pair{\mate{u}{-}}{\mate{u}{+}}1$ and
      thus $\prob{C_1C_1D^-} \geq 1/2 \cdot 0.5/256 = 0.25/256$. At
      the same time, $\prob{C_1C_1D^+}$ is similarly seen to be at
      least $0.125/256$, which makes the total probability at least
      $(0.48 + 0.25 + 0.25 + 0.125)/256 > 1/256$.

      It remains to consider the possibility that $\mate{u}{+}$ and $\mate{u}{-}$
      are adjacent. In this case, $\prob{AC_1D^- \cup C_1AD^+} \geq
      0.5/256$, so we need to find additional $0.5/256$. The event
      $C_1C_2D^+$ has a template covered by $\pair{\mate{u}{-}}{\mate{u}{-}}2$, and
      hence its probability is at least $19/20 \cdot 0.25/256 >
      0.23/256$. Similarly, $\prob{C_2C_1D^-} \geq 0.23/256$. The same
      argument applies to the events $C_1C_3D^+$ and $C_3C_1D^-$,
      resulting in a total probability of $(0.5 + 4\cdot 0.23)/256 >
      1/256$. This finishes Case 3.
    \end{xxcase}
  \end{xcase}
  

  \begin{xcase}{$E(M[U]) = \Setx{u_{-2}v_+}$.}%
    \label{case:ll}%
    In this case, two significant contributions are from the regular
    events $ABD^-$ and $BAD^+$:
    \begin{align*}
      \prob{ABD^-} &\geq \frac1{256},\\
      \prob{BAD^+} &\geq \frac{0.5}{256}.
    \end{align*}
    We distinguish several subcases; in each of them, we try to
    accumulate a contribution of $(0.5+\eps(u))/256$ from other
    events.  In particular, if $u$ is deficient of type I, IIa or
    IIa$^*$ (and $\eps(u) = -0.5$), we are done. 

    Let us consider the vertex $v$. We claim that if $v$ is deficient,
    then it must be of type II$^*$ or IIa$^*$. Indeed, the assumption
    that $u_-uu_+$ is not contained in a 4-cycle excludes types I, Ia,
    Ib and their mirror variants. Type 0 is excluded for trivial
    reasons. An inspection of the type definitions shows that if $v$
    is of type II or IIa, then $M$ includes the edge $u_{+2}v_-$,
    which we assume not to be the case. Finally, if $v$ is of type III
    or III$^*$, then $u_{-2}v_+$ is not an edge of $M$, another
    contradiction with our assumption.

    The only types that remain for $v$ are II$^*$ and IIa$^*$. Observe
    that if $v$ is of one of these types, then $uZv$ is short. 
    \begin{xxcase}{The path $uZv$ is not
        short.}\label{subc:ll-notshort}%
      By the above, $v$ is not deficient of either type, whence
      $\eps(u) \leq 0$.  The event $BAD^-$ is covered by
      $\pair{u_{+2}}{v_-}1$ (consider the outgoing arc incident with
      $u_-$). It follows that $\prob{BAD^-} \geq 0.25/256$. The same
      argument applies to $BAD^0$, and thus
      \begin{equation*}
        \prob{ABD^- \cup BAD^+ \cup BAD^- \cup BAD^0} \geq
        \frac{1+0.5+0.25+0.25}{256} = \frac2{256}. 
      \end{equation*}
    \end{xxcase}

    We have observed that if $v$ is deficient, then it must be of type
    II$^*$ or IIa$^*$. Since this requires that the $F$-neighbours of
    $\mate{u}{-}$ are $v_+$ and $\mate{v}{-}$, it can only happen in the following
    subcase.

    \begin{xxcase}{The vertices $\mate{u}{+}$ and $\mate{u}{-}$ are
        non-adjacent.}\label{subc:ll-notadj}%
      Consider the events $AC_1D^-$ and $C_1C_1D^-$
      (Figure~\ref{fig:ll-notadj}). If the event $AC_1D^-$ has a
      sensitive pair, it is either $(\mate{u}{+},\mate{u}{+})$ or $(\mate{u}{+},u_{-2})$.

      \begin{figure}
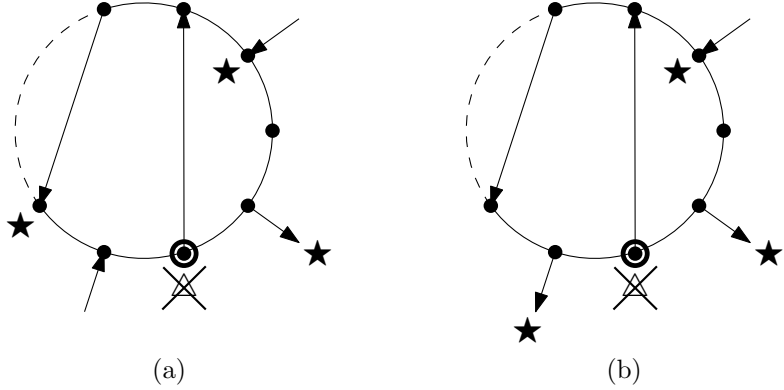

        \centering 
        \sfigdef{97}
        \hf\sfigtop{96}{}\hf\sfigtop{97}{}\hf
        \caption{Two events used in Subcase~\ref{subc:ll-notadj} of
          the proof of Proposition~\ref{p:32}: (a) $AC_1D^-$, (b)
          $C_1C_1D^-$.}
        \label{fig:ll-notadj}
      \end{figure}
      
      Suppose first that $\mate{u}{+}$ is distinct from $u_{-3}$. In this
      case, Lemma~\ref{l:dep} implies that $\prob{AC_1D^-} \geq 3/4
      \cdot 0.5/256$ no matter whether $\mate{u}{+}\in V(Z)$ or
      not. Secondly, $\prob{C_1C_1D^-} \geq 1/2 \cdot 0.5/256$ (by
      Lemma~\ref{l:dep} again), so the total contribution is at least
      $0.625/256$, which is sufficient if $v$ is not deficient, or is
      deficient of type II$^*$. It remains to consider the possibility
      that $v$ is deficient of type IIa$^*$. In this case, $AC_1D^-$ is
      covered by $\pair{\mate{u}{+}}{\mate{u}{+}}4$; by Lemma~\ref{l:dep},
      $\prob{AC_1D^-} \geq 79/80\cdot 0.5/256 > 0.49/256$. Similarly,
      we obtain $\prob{C_1C_1D^-} > 0.49/256$ and $\prob{C_2C_1D^-}
      \geq 0.24/256$. The total contribution is $1.22/256 >
      (0.5+\eps(u))/256$.

      We may thus suppose that $\mate{u}{+} = u_{-3}$; since this is
      incompatible with $v$ being of type II$^*$ as well as IIa$^*$,
      we find that $v$ is not deficient and $\eps(u)\leq 0$. We have
      $\prob{C_1C_1D^-} \geq 3/4 \cdot 0.5/256$ (whether $\mate{u}{-}$ is
      contained in $vZu$ or outside $Z$) since the event $C_1C_1D^-$
      is covered by a single 2-free pair (either $(\mate{u}{-},\mate{u}{-})$ or
      $(\mate{u}{-},u_{-3})$) and the weight of the event is 9. It remains to
      find a further contribution of $0.125+\eps(u)$ to reach the
      target amount. In particular, we may assume that $u$ is not
      deficient of type II.

      If $\mate{u}{-} \neq v_{+2}$, the event $C_1C_1D^0$ is covered by
      $(v_+,\mate{u}{-})$ and $(\mate{u}{-},u_{-3})$. Using Lemma~\ref{l:dep}, we
      find that $\prob{C_1C_1D^0} \geq 1/2 \cdot 0.5/256$, which is
      sufficient.

      Thus, the present subcase boils down to the situation where
      $\mate{u}{-}$ is adjacent to $v_+$ (i.e., $\mate{u}{-} = v_{+2}$) and $\mate{u}{+} =
      u_{-3}$. Since $u$ is not deficient of type II, it must be that
      $v_-v_{+3}$ is an edge of $M$. In this case, the only events of
      nonzero probability in $\Sigma$ are the events $ABD^-$, $BAD^+$
      and $C_1C_1D^-$ considered above. Fortunately, the condition
      that $v_-v_{+3}\in E(M)$ increases the probability bound
      for $C_1C_1D^+$ from $3/4\cdot 0.5/256$ to $0.5/256$ as
      required. 
    \end{xxcase}

    As all the subcases where $v$ is deficient have been covered in
    Subcase~\ref{subc:ll-notadj}, we may henceforth assume that
    $\eps(u) \leq 0$. In particular, if a further contribution of
    $0.5/256$ can be found (as in the following subcase), then it is
    sufficient.

    \begin{xxcase}{The vertices $\mate{u}{+}$ and $\mate{u}{-}$ are adjacent, $uZv$
        is short and $\mate{u}{+} \neq u_{-3}$.}%
      Suppose first that $\mate{u}{-}$ (and $\mate{u}{+}$) is contained in $Z$. The
      event $C_2C_1D^-$ is then covered by the 1-free pair
      $(\mate{u}{-},u_{-2})$ or $(\mate{u}{+},u_{-2})$. Since its weight is 9, we
      have $\prob{C_2C_1D^-} \geq 1/2 \cdot 0.5/256 = 0.25/256$. Note
      that the event $C_3C_1D^-$ is valid; it is also regular, so
      $\prob{C_3C_1D^-} \geq 0.25/256$. Together, this yields
      $0.5/256$, which is sufficient.

      We may therefore assume that $\mate{u}{-}$ (and $\mate{u}{+}$) are not
      contained in $Z$. The event $C_2C_1D^-$ is covered by
      $(\mate{u}{+},\mate{u}{-})$ or its reverse, each of which is 3-free. By
      Lemma~\ref{l:dep}, $\prob{C_2C_1D^-} \geq 39/40 \cdot 0.5/256 >
      0.48/256$. The event $C_3C_1D^-$, if irregular, has the same
      sensitive pair and it is now 2-free. Since the weight of its
      diagram is $10$, $\prob{C_3C_1D^-} \geq 19/20 \cdot 0.25/256 >
      0.23/256$. The total contribution exceeds the desired $0.5/256$.
    \end{xxcase}

    \begin{figure}
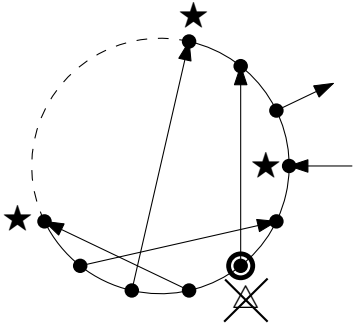

      \centering
      \fig{75}
      \caption{The event $C_1AD^+$ used in Subcase~\ref{subc:ll-last}
        of the proof of Proposition~\ref{p:32}.}
      \label{fig:ll-last}
    \end{figure}

    \begin{xxcase}{The vertices $\mate{u}{+}$ and $\mate{u}{-}$ are adjacent, $uZv$
        is short and $\mate{u}{+} = u_{-3}$.}\label{subc:ll-last}%
      Suppose first that the path $v_+Zu_{-4}$ contains at least two
      vertices distinct from $\mate{v}{-}$. Then the event $C_1AD^+$ (see
      Figure~\ref{fig:ll-last}) is covered by
      $\pair{v_+}{u_{-4}}2$. Since the weight of $C_1AD^+$ is 10, we
      have $\prob{C_1AD^+} \geq 3/4\cdot 0.25/256$. The events
      $C_1C_2D^+$ and $C_1C_3D^+$ have weight 11, but the diagram of
      each of them has a removable symbol at $u_{-3}$, so we get the
      same bound of $3/4\cdot 0.25/256$ for each of $C_1C_2D^+$ and
      $C_1C_3D^+$, since each of the diagrams is covered by one 2-free
      pair. The total contribution is at least $0.56/256$.

      If $\mate{v}{-}$ is the only internal vertex of $v_+Zu_{-4}$, then the
      above events are in fact regular and we obtain an even higher
      contribution. Thus, we may assume that either $v_+$ and $u_{-4}$
      are neighbours on $Z$, or $v_+Zu_{-4}$ contains two internal
      vertices and one of them is $\mate{v}{-}$. 

      The former case is ruled out since we are assuming (from the
      beginning of Case~\ref{case:ll}) that $u$ is not deficient of type
      IIa. It remains to consider the latter possibility. Here, $\mate{v}{-}$
      is either $v_{+2}$ or $v_{+3}$. In fact, it must be $v_{+3}$,
      since otherwise $u$ would be deficient of type I, which has also
      been excluded at the beginning of Case~\ref{case:ll}. But then
      $u$ is deficient of type III, so $\eps(u) = -0.125$. At the same
      time, the unique sensitive pair for each of the events
      $C_1AD^+$, $C_1C_2D^+$ and $C_1C_3D^+$, considered above, is now
      1-free; the probability of the union of these events is thus at
      least $3\cdot 1/2 \cdot 0.25/256 = 0.375/256 = (0.5 +
      \eps(u))/256$ as necessary.
    \end{xxcase}
    
\end{xcase}


  \begin{xcase}{$E(M[U]) = \Setx{u_{-2}v_+,u_{+2}v_-}$.}%
    \label{case:ll-rr}%
    As in Case~\ref{case:ll}, the probability of the event $ABD^-$ is
    at least $1/256$; by symmetry, $\prob{BAD^+} \geq 1/256$. We claim
    that the resulting contribution of $2/256$ is sufficient because
    $\eps(u)\leq 0$. Clearly, $v$ is not of type 0. Applying the
    definitions of the remaining types to $v$, we find that none of
    them is compatible with the presence of the edges $u_{-2}v_+$ and
    $u_{+2}v_-$ in $M$. This shows that $\eps(u)\leq 0$.
  \end{xcase}


  \begin{xcase}{$E(M[U]) = \Setx{u_{-2}u_{+2}}$.}%
    \label{case:low}%
    Recall our assumption that the set $J=\Setx{u_-,u_+,v_-,v_+}$ is
    independent. If we suppose that, moreover, both the paths $uZv$
    and $vZu$ are short, then the mate of each vertex in $J$ must be
    outside $Z$. This means that $\size{\bd Z} = 4$, a contradiction
    with $F$ satisfying the condition in Theorem~\ref{t:ks}. Thus, we
    may assume by symmetry that the path $vZu$ is not short.

    The event $ABD^-$ is regular of weight 9, so $\prob{ABD^-} \geq
    0.5/256$. Similarly, $\prob{BAD^+} \geq 0.5/256$. We need to find
    additional $(1+\eps(u))/256$ to add to the probabilities of
    $ABD^-$ and $BAD^+$ above. Note also that if $v$ is deficient, then
    it must be of type III$^*$ and this only happens in
    Subcase~\ref{subc:down-long}.
      
    \begin{xxcase}{$uZv$ is not short.}\label{subc:down-notshort}%
      Assume that $\mate{u}{+}$ is not contained in $vZu$, and consider the
      events $ABD^+$ and $ABD^0$. If $\mate{v}{-}$ is not contained in $vZu$,
      then $ABD^+$ is covered by the pair $\pair {v_+}{u_{-2}}2$, and
      it follows that $\prob{ABD^+} \geq 3/4 \cdot 0.5/256 =
      0.375/256$. Similarly, $\prob{ABD^0} \geq 0.375/256$. On the
      other hand, if $\mate{v}{-}$ is contained in $vZu$, then the pair
      $(v_+,u_{-2})$ may only be 1-free for $ABD^+$, whence
      $\prob{ABD^+} \geq 1/2 \cdot 0.5/256 = 0.25/256$, but this
      decrease is compensated for by the fact that $\prob{ABD^0} \geq
      0.5/256$ as $ABD^0$ is now regular. Summarizing, if $\mate{u}{+}$ is
      not contained in $vZu$, then the probability of $ABD^+ \cup
      ABD^0$ is at least $0.75/256$.

      The event $BAD^0$ of weight 9 is covered by the pair
      $(u_{+2},v)$, which is 1-free since $uZv$ is not short. Hence,
      $\prob{BAD^0} \geq 1/2 \cdot 0.5/256 = 0.25/256$. Putting this
      together, for $\mate{u}{+}\notin V(vZu)$ we have:
      \begin{multline*}
        \prob{ABD^- \cup BAD^+ \cup ABD^+ \cup ABD^0 \cup BAD^0}\\ 
        \geq
        \frac{0.5 + 0.5 + 0.375 + 0.375 + 0.25}{256} = \frac2{256}.
      \end{multline*}
      Since this is the required amount, we may assume by symmetry
      that $\mate{u}{+}\in V(vZu)$ and $\mate{u}{-}\in V(uZv)$
      (Figure~\ref{fig:down-notshort}).

      If $\mate{v}{+}$ is not contained in $uZv$, then in addition to
      $\prob{BAD^0} \geq 0.25/256$ as noted above, we have
      $\prob{BAD^-} \geq 0.25/256$ for the same reasons. On the other
      hand, $\mate{v}{+}\in V(uZv)$ increases the probability bound for
      $BAD^0$ to $\prob{BAD^0} \geq 0.5/256$ as the event is regular
      in this case. All in all, the contribution of $BAD^- \cup BAD^0$
      is at least $0.5/256$.

      By symmetry, $ABD^+ \cup ABD^0$ also contributes at least
      $0.5/256$. Together with the events $ABD^-$ and $BAD^+$, which
      have each a probability of at least $0.5/256$ as discussed
      above, we have found the required $2/256$.
    \end{xxcase}

    \begin{figure}
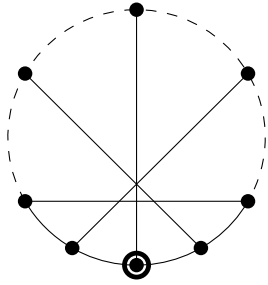

      \centering
      \hf\fig{81}\hf
      \caption{A configuration in Subcase~\ref{subc:down-notshort} of
        the proof of Proposition~\ref{p:32}.}
      \label{fig:down-notshort}
    \end{figure}

    Thus, the path $uZv$ may be assumed to be short. 

    \begin{xxcase}{$\mate{u}{+}\notin V(Z)$.}\label{subc:down-out}%
      As in the previous subcase, $\prob{ABD^+ \cup ABD^0} \geq
      0.75/256$.

      \begin{figure}
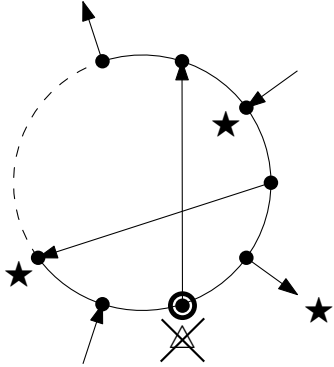

        \centering
        \hspace{4mm}\fig{76}
        \caption{The event $AC_1D^-$ used in
          Subcase~\ref{subc:down-out} of the proof of
          Proposition~\ref{p:32}.}
        \label{fig:down-out}
      \end{figure}

      The event $AC_1D^-$ has weight 10 (see
      Figure~\ref{fig:down-out}). If the cycle of $F$ containing
      $\mate{u}{+}$ is odd, it contains at least 3 vertices different from
      $\mate{u}{+}$ and $\mate{v}{+}$. Thus, $AC_1D^-$ is covered by
      $\pair{\mate{u}{+}}{\mate{u}{+}}3$. By Lemma~\ref{l:dep}, $\prob{AC_1D^-} \geq
      39/40 \cdot 0.25/256 > 0.24/256$.

      Similarly, $AC_1D^0$ has a diagram of weight 10 and is covered
      by $\pair{\mate{u}{+}}{\mate{u}{+}}4$ and $\pair{v_-}{u_{-2}}2$. By
      Lemma~\ref{l:dep}, $\prob{AC_1D^0} \geq 59/80 \cdot 0.25/256 >
      0.18/256$. The probability of $AC_1D^- \cup AC_1D^0$ is thus at
      least $(0.24+0.18)/256 = 0.42/256$, more than the missing
      $0.25/256$.
    \end{xxcase}

    \begin{xxcase}{$\mate{u}{+}\in V(Z)$ and the length of $vZu$ is at least
        7.}\label{subc:down-long}%
      We will show that the assumption about $vZu$ increases the
      contribution of $ABD^+\cup ABD^0$. Suppose that $\mate{v}{-} \in
      V(Z)$. Then $ABD^+$ is covered by $\pair{v_+}{u_{-2}}2$ and
      $ABD^0$ is regular, so $\prob{ABD^+\cup ABD^0} \geq (3/4 + 1)
      \cdot 0.5/256 = 0.875/256$. On the other hand, if $\mate{v}{-}\notin
      V(Z)$, then the pair $(v_+,u_{-2})$ is 3-free for both $ABD^+$
      and $ABD^0$, and we get the same result:
      \begin{equation*}
        \prob{ABD^+\cup ABD^0} \geq 2 \cdot \frac78 \cdot
        \frac{0.5}{256} = 0.875/256.
      \end{equation*}
      We need to find the additional $(0.125 + \eps(u))/256$.

      Suppose first that $v$ is deficient, necessarily of type
      III$^*$, so $\eps(u) = 0.125$. The induced subgraph of $G$ on
      $V(Z)$ is then as shown in Figure~\ref{fig:down-long-deficient};
      in this case, the event $C_1C_1D^-$ is regular and
      $\prob{C_1C_1D^-} \geq 0.5/256$, a sufficient amount.

      \begin{figure}
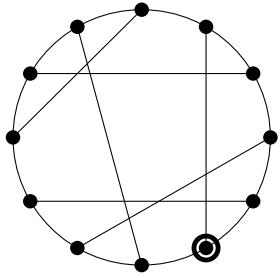

        \centering
        \hf\fig{53}\hf
        \caption{The situation where $v$ is deficient of type III$^*$
          in Subcase~\ref{subc:down-long} of the proof of
          Proposition~\ref{p:32}.}
        \label{fig:down-long-deficient}
      \end{figure}

      We may thus assume that $\eps(u) \leq 0$. Suppose that $\mate{u}{+}$ is
      not adjacent to either $u_{-2}$ or $v_+$. Then the event
      $AC_1D^0$ is covered by $\pair{v_+}{\mate{u}{+}}2$ and
      $\pair{\mate{u}{+}}{u_{-2}}2$. By Lemma~\ref{l:dep}, $\prob{AC_1D^0}
      \geq 1/2 \cdot 0.25/256 = 0.125/256$ as required.

      The vertex $\mate{u}{+}$ can therefore be assumed to be adjacent to
      $u_{-2}$ or $v_+$. The event $C_1C_1D^-$ has only one sensitive
      pair, namely $(\mate{u}{-},\mate{u}{+})$ or its reverse (if $\mate{u}{-} \in V(Z)$)
      or $(\mate{u}{-},\mate{u}{-})$ (if $\mate{u}{-}$ is outside $Z$). If this is a 1-free
      pair, then by Lemma~\ref{l:dep}, $\prob{C_1C_1D^-} \geq 1/2
      \cdot 0.5/256 > 0.125/256$ as required. In the opposite case, it
      must be that $\mate{u}{-}$ is a neighbour of $\mate{u}{+}$. Then, however, we
      observe that $\prob{ABD^+}$ and $\prob{ABD^0}$ are both at least
      $0.5/256$ (as the events are regular), and this increase
      provides the missing $0.125/256$.
    \end{xxcase}

    \begin{figure}
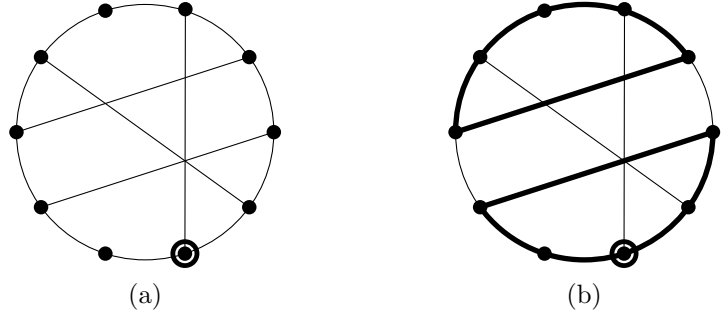

      \centering
      \hf\sfig{101}{}\hf\sfig{102}{}\hf
      \caption{The use of Lemma~\ref{l:split-cycle} in
        Subcase~\ref{subc:down-rest} of the proof of
        Proposition~\ref{p:32}. (a) The cycle $Z$ and its chords
        assuming that $\mate{u}{+} = v_{+2}$. (b) The two 5-cycles (bold)
        contradicting Lemma~\ref{l:split-cycle}(ii). }
      \label{fig:down-rest}
    \end{figure}

    To complete the discussion of Case~\ref{case:low}, it remains to
    consider the following subcase.
    \begin{xxcase}{$uZv$ is short, $\mate{u}{+}\in V(Z)$, and the
        length of $vZu$ is at most 6.}\label{subc:down-rest}%
      Since $vZu$ is not short, its length is $5$ or $6$. Suppose
      first that $\size{vZu} = 5$. By our assumption that
      $\Setx{u_-,u_+,v_-,v_+}$ is independent, $\mate{u}{+} =
      v_{+2}$. Since all the vertices of $Z$ except $u_-$, $v_-$ and
      $v_+$ have their mates in $Z$, we obtain $\size{\bd Z} = 3$,
      contradicting the choice of $F$.

      We may therefore assume that $\size{vZu} = 6$, in which case the
      vertex $\mate{u}{+}$ equals either $v_{+2}$ or
      $v_{+3}$. Consider first the case that $\mate{u}{+} =
      v_{+2}$. Then each edge in $\bd Z$ is incident with a vertex in
      $\Setx{v_{+3},u_-,v_-,v_+}$. By the choice of $F$, $M$ must
      contain an edge with both ends in the latter set. For trivial
      reasons, the only candidate is $v_{+3}v_-$
      (Figure~\ref{fig:down-rest}(a)). However, this is also not an
      edge of $M$ since the 5-cycles $u_{-2}Zu_{+2}$ and $v_-Zv_{+3}$
      would contradict Lemma~\ref{l:split-cycle}(ii). (See
      Figure~\ref{fig:down-rest}(b) for illustration.)

      Thus, $\mate{u}{+} = v_{+3}$. Here, each edge of $\bd Z$ is incident
      with a vertex in $\Setx{u_-,v_-,v_+,v_{+2}}$, and it is easy to
      see that one of these edges must be incident with $v_+$. There are
      two possibilities for an edge with both ends in
      $\Setx{u_-,v_-,v_+,v_{+2}}$, namely $v_-v_{+2}$ or $u_-v_{+2}$. In
      either case, the event $ABD^0$ is easily seen to be regular and
      thus $\prob{ABD^0} \geq 0.5/256$. In fact, this concludes the
      discussion if $v_-v_{+2}\in E(M)$, since then $u$ is deficient of
      type $I$ and $\eps(u) = -0.5$, and the contribution of $0.5/256$
      is sufficient.

      In the remaining case that $u_-v_{+2} \in E(M)$, we need a further
      $0.5/256$, and it is provided by the regular event $ABD^+$.      
    \end{xxcase}
  \end{xcase}


  \begin{xcase}{$E(M[U]) = \Setx{u_{-2}v_-}$.}%
    \label{case:lr}%
    If both the paths $uZv$ and $vZu$ are short, then each edge of
    $\bd Z$ is incident with a vertex in
    $\Setx{u_-,u_+,u_{+2},v_+}$. Our assumptions imply that no edge of
    $M$ joins two of these vertices, so $\size{\bd Z} = 4$ --- a
    contradiction with the choice of $F$. We may therefore assume that
    at least one of $vZu$ and $uZv$ is not short.

    In all the subcases, we can use the regular event $BAD^+$, for
    which we have $\prob{BAD^+} \geq 0.5/256$. Hence, we need to find
    an additional probability of $(1.5 + \eps(u))/256$.

    \begin{xxcase}{$vZu$ is short.}\label{subc:lr-left-short}%
      In this subcase, the path $v_-vv_+$ is contained in a 4-cycle
      and it is not hard to see that $u$ must be deficient of type I
      (neither $uv$ nor $u_-uu_+$ is contained in a 4-cycle, and the
      missing edge $u_{+2}v_+$ rules out cases Ia$^*$ and
      Ib$^*$). Thus, $\eps(u) = -0.5$ and we need to find further
      $1/256$ worth of probability.

      Observe first that by our assumptions, the set
      $\Setx{u_-,u_+,u_{+2},v_+}$ is independent. We will distinguish
      several cases based on whether $\mate{u}{-}$, $\mate{u}{+}$ and $\mate{v}{+}$ are
      contained in $Z$ (and hence in $u_{+3}Zv_{-2}$) or not.

      If $\mate{u}{+} \in V(Z)$, then the events $BAD^0$ and $BAD^-$ are
      regular, and each of them has probability $0.5/256$, which
      provides the necessary $1/256$.

      Suppose thus that $\mate{u}{+}\notin V(Z)$ and consider first the case
      that $\mate{u}{-} \notin V(Z)$. The event $C_1AD^+$ is covered by the
      pair $\pair{\mate{u}{-}}{\mate{u}{-}}4$, so by Lemma~\ref{l:dep} its
      probability is $\prob{C_1AD^+} \geq 79/80 \cdot 0.25/256 >
      0.24/256$. The event $C_1AD^0$ has up to two sensitive pairs: it
      is covered by $\pair{\mate{u}{-}}{\mate{u}{-}}4$ and $\pair{u_{+2}}{v_-}2$,
      where the latter pair is 2-free because $uZv$ is not short. We
      obtain $\prob{C_1AD^0} \geq 59/80 \cdot 0.25/256 > 0.18/256$.

      To find the remaining $0.58/256$ (still for $\mate{u}{-} \notin V(Z)$),
      we use the events $BAD^0$ and $BAD^-$. We claim that their
      probabilities add up to at least $0.75/256$. Indeed, if $\mate{v}{+}
      \notin V(Z)$, then both $BAD^0$ and $BAD^-$ are covered by the
      pair $\pair{u_{+2}}{v_-}2$ (which is 2-free because $uZv$ is not
      short and $\mate{u}{-}\notin V(Z)$). By Lemma~\ref{l:dep}, they have
      probability at least $0.375/256$ each. On the other hand, if
      $\mate{v}{+} \in V(Z)$, then $BAD^0$ is regular and $BAD^-$ is covered
      by $\pair{u_{+2}}{v_-}1$, so $\prob{BAD^0} \geq 0.5/256$ and
      $\prob{BAD^-} \geq 0.25/256$. For both of the possibilities,
      $\prob{BAD^0\cup BAD^-} \geq 0.75/256$ as claimed.

      We can therefore assume that $\mate{u}{-}\in V(Z)$ (and $\mate{u}{+}\notin
      V(Z)$, of course). A large part of the required $1/256$ is
      provided by the event $C_1C_1D^+$, which is covered by the pair
      $\pair{\mate{u}{+}}{\mate{u}{+}}{4}$, so $\prob{C_1C_1D^+} \geq 79/80 \cdot
      0.5/256 > 0.49/256$.

      A final case distinction will be based on the location of
      $\mate{v}{+}$. Suppose first that $\mate{v}{+}\notin V(Z)$. We claim that the
      length of $uZv$ is at least 7. If not, then since $uZv$ is not
      short, the length of $Z$ is 9 or 10. At the same time, $Z$ has
      at least 3 chords (incident with $u$, $u_-$ and $u_{-2}$) and
      therefore $\size{\bd Z} \leq 4$. By the choice of $F$
      and the assumption that the mates of $u_+$ and $v_+$ are outside
      $Z$, $Z$ has length 10 and $\bd Z$ is of size 2. In addition,
      $u_{+2}$ is incident with a chord of $Z$ whose other endvertex
      $w$ is contained in $u_{+3}Zv_{-2}$. However, $\size{uZv} = 6$
      implies that $w\in\Setx{u_{+3},u_{+4}}$, contradicting the
      assumption that $G$ is simple and triangle-free. We conclude
      that $\size{uZv} \geq 7$ as claimed.

      This observation implies that for the event $BAD^0$, the only
      possibly sensitive pair, namely $(u_{+2},v_-)$, is
      2-free. Hence, $\prob{BAD^0} \geq 3/4 \cdot 0.5/256 =
      0.375/256$. Hence, $\prob{BAD^-} \geq 0.375/256$ and this amount
      is sufficient.

      It remains to consider the case that $\mate{v}{+}\in V(Z)$. Being
      regular, the event $BAD^0$ has probability at least
      $0.5/256$. Thus, it is sufficient to find further events forcing
      $u$ of total probability at least $0.01/256$. It is easiest to
      consider the mutual position of $\mate{u}{-}$ and $\mate{v}{+}$ on
      $u_{+3}Zv_{-2}$. If $\mate{u}{-}\in V(\mate{v}{+}Zv_{-2})$, then the event
      $C_1AD^+$ is regular and has probability at least $0.25/256$. In
      the opposite case, $C_1C_1D^0$ is covered by the pair
      $\pair{\mate{u}{+}}{\mate{u}{+}}{4}$, which means that $\prob{C_1C_1D^0} \geq
      79/80 \cdot 0.5/256 > 0.49/256$. In both cases, the probability
      is sufficiently high.
    \end{xxcase}

    Having dealt with Subcase~\ref{subc:lr-left-short}, we can use the
    event $AAD^+$, which is covered by $\pair{v_+}{u_{-2}}2$. By
    Lemma~\ref{l:dep}, $\prob{AAD^+} \geq 3/4 \cdot 0.5/256 =
    0.375/256$ and hence $\prob{BAD^+ \cup AAD^+} \geq
    0.875/256$. Since $v$ is not deficient, we seek a further
    contribution of at least $1.125/256$.

    \begin{figure}
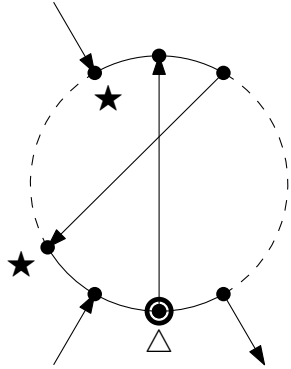

      \centering
      \hf\fig{80}\hf
      \caption{The event $ABD^+$ used in Subcase~\ref{subc:lr-not-short}
        of the proof of Proposition~\ref{p:32}.}
      \label{fig:lr-not-short}
    \end{figure}

    \begin{xxcase}{Neither $vZu$ nor $uZv$ is
        short.}\label{subc:lr-not-short}%
      Consider the event $ABD^+$ of weight 8
      (Figure~\ref{fig:lr-not-short}) which is covered by the pair
      $(v_+,u_{-2})$. Since $vZu$ is not short, the vertices in the
      pair are not neighbours. Furthermore, if the pair is sensitive,
      then the path $v_+Zu_{-2}$ contains at least two internal
      vertices, one of which is different from $\mate{u}{+}$. Thus, the pair
      is 1-free and by Lemma~\ref{l:dep}, $\prob{ABD^+} \geq 1/2 \cdot
      1/256$. If the pair $(v_+,u_{-2})$ is actually 2-free in
      $ABD^+$, then the estimate increases to $3/4 \cdot 1/256$.

      The event $BAD^0$ is covered by the pair $(u_{+2},v_-)$, which
      is 1-free as $uZv$ is not short; moreover, if $\mate{u}{-}\notin
      V(uZv)$, then the pair is 2-free. Thus, $\prob{BAD^0} \geq 1/2
      \cdot 0.5/256 = 0.25/256$ or $3/4 \cdot 0.5/256 = 0.375/256$ in
      the respective cases.

      If the higher estimates hold for both the events $ABD^+$ and
      $BAD^0$ considered above, then the contributions of these events
      total
      \begin{equation*}
        \frac{0.75+0.375}{256} = \frac{1.125}{256},
      \end{equation*}
      which is sufficient.

      Suppose first that we get the higher estimate for
      $\prob{ABD^+}$, that is, that $(v_+,u_{-2})$ is 2-free in
      $ABD^+$. By the above, it may be assumed that $\mate{u}{-}\in V(uZv)$
      and the pair $(u_{+2},v_-)$ is not 2-free in $BAD^0$. We need to
      find an additional $0.125/256$. To this end, we use the event
      $BAD^-$ of weight $9$. The probability of $BAD^-$ is at least
      $1/2 \cdot 0.5/256$ (which is sufficient) if $(u_{+2},v_-)$ is
      1-free in $BAD^-$. This could be false only if $\Setx{\mate{u}{-},\mate{v}{+}}
      = \Setx{u_{+3},v_{-2}}$; for each of the corresponding two
      possibilities, the event $BAD^0$ is a regular one, contradicting
      the assumption that $(u_{+2},v_-)$ is not 2-free in $BAD^0$.

      It remains to discuss the possibility that $(v_+,u_{-2})$ is not
      2-free in $ABD^+$ --- thus, the length of $vZu$ is 6 and
      $\mate{u}{+}\in\Setx{v_{+2},v_{+3}}$. Since the lower bound to
      $\prob{AAD^+}$ increases to $0.5/256$ in this case, the total
      probability of $BAD^+$, $AAD^+$ and $ABD^+$ is at least
      $1.5/256$. In addition, we have a contribution of $1/2 \cdot
      0.5/256$ from $BAD^0$, so we need to add a further $0.25/256$.

      Assume first that $\mate{u}{-}\neq v_{-2}$ and consider the diagram
      $C_1C_1D^-$. We claim that $\prob{C_1C_1D^-} \geq 1/2 \cdot
      0.5/256$. This is certainly true if $\mate{u}{-}\notin V(Z)$ since
      $C_1C_1D^-$ has weight 9 and it is covered by
      $\pair{\mate{u}{-}}{\mate{u}{-}}4$. Suppose thus that $\mate{u}{-}\in V(Z)$. There is
      at most one sensitive pair for $C_1C_1D^-$ ($(\mate{u}{-},\mate{u}{+})$ or
      $(\mate{u}{-},v_-)$ or none). If the event is regular or the sensitive
      pair is 1-free, then $\prob{C_1C_1D^-} \geq 1/2 \cdot 0.5/256$
      as required. Otherwise, since there is only one outgoing arc in
      the diagram for $C_1C_1D^-$, $\mate{u}{-}$ must be adjacent to $v_-$ or
      $\mate{u}{+}$. The former case is ruled out by the assumption $\mate{u}{-}\neq
      v_{-2}$. In the latter case, the 5-cycle $uvZv_{+2}u_-$ and the
      cycle $\mate{u}{+}u_+Zv_-u_{-2}$ provide a contradiction with
      Lemma~\ref{l:split-cycle}(ii).

      We may therefore assume that $\mate{u}{-} = v_{-2}$. Consider the
      cycles $v_-Zu_{-2}$ and $u_-Zv_{-2}$. Since each of the edges
      $v_{-2}v_-$, $u_{-2}u_-$, $uv$ and $u_+\mate{u}{+}$ has one endvertex
      in each of the cycles, Lemma~\ref{l:split-cycle}(i) implies that
      neither $u_{-3}$ nor $v_+$ have their mate in $u_-Zv_{-2}$. We
      claim that $P(BAD^-) \geq 7/8\cdot 0.5/256$. The event is
      covered by the pair $(u_{+2},v_-)$, so by Lemma~\ref{l:dep}, it
      suffices to show that the pair is 3-free. If not, then
      $d_Z(u_{+2},v_-) = 3$ and $u_{+3}$ is the only vertex of
      $u_{+2}Zv_-$ which is not a head of $BAD^-$. In that case,
      however, $\bd Z$ consists of the four edges of $M$ incident with
      a vertex from $\Setx{u_{+2},u_{+3},v_+,v_{+2},v_{+3}} -
      \Setx{\mate{u}{+}}$, contradicting the choice of $F$. We conclude that
      $P(BAD^-) \geq 7/8\cdot 0.5/256$ as claimed. Since this
      contribution exceeds the required $0.25/256$, the discussion of
      Subcase~\ref{subc:lr-not-short} is complete.
    \end{xxcase}

    \begin{xxcase}{$uZv$ is short and either the length of $vZu$ is at
        least 7, or $\mate{u}{+}\notin V(vZu)$.}%
      The event $ABD^+$ is covered by $\pair{v_+}{u_{-2}}2$ by the
      assumption. Thus, $\prob{ABD^+} \geq 3/4 \cdot 1/256$. In view
      of the events $BAD^+$ (probability at least $0.5/256$) and
      $AAD^+$ (probability at least $3/4 \cdot 0.5/256$), we need to
      collect further $0.375/256$.

      Suppose first that $\mate{u}{+}\notin V(vZu)$. The event $AC_1D^+$ of
      weight 9 is covered by $\pair{\mate{u}{+}}{\mate{u}{+}}4$ and
      $\pair{v_+}{u_{-2}}2$. By~Lemma~\ref{l:dep}, $\prob{AC_1D^+}
      \geq 59/80 \cdot 0.5/256 > 0.36/256$. The event $AC_2D^+$ of
      weight 11 is covered by $\pair{v_+}{u_{-2}}2$; thus,
      $\prob{AC_2D^+} \geq 3/4 \cdot 0.125/256$, which together with
      $\prob{AC_1D^+}$ yields more than the required $0.375/256$.

      We may therefore assume that $\mate{u}{+}\in V(vZu)$, which increases
      $\prob{AAD^+}$ to at least $0.5/256$ (so the missing probability
      is now $0.25/256$).

      Suppose that $\mate{u}{-}$ and $\mate{u}{+}$ are non-adjacent. If $\mate{u}{-}\notin
      V(Z)$, then $C_1C_1D^-$ is covered by
      $\pair{\mate{u}{-}}{\mate{u}{-}}3$. Otherwise, it is covered by
      $\pair{\mate{u}{-}}{\mate{u}{+}}1$ (we have to consider $\mate{v}{+}$ here). In
      either case, $\prob{C_1C_1D^-} \geq 1/2 \cdot 0.5/256$ as
      required.

      We may thus assume that $\mate{u}{-}$ and $\mate{u}{+}$ are adjacent. The
      event $AC_1D^+$ has weight 9 and at most one possibly sensitive
      pair; this pair is $(\mate{u}{+},u_{-2})$ if $(\mate{u}{-})_+ = \mate{u}{+}$, or
      $(\mate{u}{+},v_+)$ otherwise. If the sensitive pair is 2-free, we are
      done since $\prob{AC_1D^+} \geq 3/4 \cdot 0.5/256$. In the
      opposite case, we get two possibilities.

      The first possibility is that $\mate{u}{+}$ is adjacent to $u_{-2}$, so
      $\mate{u}{+} = u_{-3}$. In this case, the 5-cycle $u_{-3}u_+Zv_-u_{-2}$
      and the cycle $uvZu_{-4}u_-$ provide a contradiction with
      Lemma~\ref{l:split-cycle}(ii).

      The second possibility is that $\mate{u}{+}$ is adjacent to $v_+$,
      i.e., $\mate{u}{+} = v_{+2}$. Here, the event $AC_2D^+$ is regular, and
      $\prob{AC_2D^+} \geq 0.25/256$ as desired.
    \end{xxcase}

    \begin{xxcase}{$uZv$ is short, the length of $vZu$ is 6, and
        $\mate{u}{+}\in V(vZu)$.}%
      The vertex $\mate{u}{+}$ equals either $v_{+2}$ or $v_{+3}$. Each of
      the events $ABD^+$, $BAD^+$, $AAD^+$ (considered earlier) now
      have probability at least $0.5/256$. We need to find an
      additional $0.5/256$.

      If $\mate{u}{+} = v_{+2}$, then each edge of $\bd Z$ is incident with a
      vertex from the set $\Setx{u_-,u_{+2},v_+,v_{+3}}$. By the
      choice of $F$, some edge of $M$ must join two of these vertices;
      our assumptions imply that the only candidate is the edge
      $u_{-3}u_{+2}$. The events $AC_2D^+$, $C_2AD^+$ and $C_2C_2D^+$
      are regular, with $AC_2D^+$ having a removable symbol, and their
      probabilities are easily computed to be at least $0.25/256$,
      $0.125/256$ and $0.125/256$, respectively. This adds up to the
      required $0.5/256$.

      On the other hand, if $\mate{u}{+} = v_{+3}$, then each edge of $\bd Z$
      is incident with $\Setx{u_-,u_{+2},v_+,v_{+2}}$. In two of the
      cases, there is a pair of 5-cycles which yields a contradiction
      with Lemma~\ref{l:split-cycle}(ii): if $u_{+2}v_{+2} \in E(M)$,
      then the cycles are $u_{-3}Zu_+$ and $u_{+2}Zv_{+2}$, while if
      $u_-v_{+2} \in E(M)$, then the cycles are $u_-uvZv_{+2}$ and
      $u_+2Zv_-u_{-2}u_{-3}$. All the other cases are ruled out by the
      assumptions (notably, the assumption that $u_{+2}v_+ \notin
      E(M)$).
    \end{xxcase}

    The only possibility in Case~\ref{case:lr} not covered by the
    above subcases is that $uZv$ is short, $vZu$ has length 5 and
    $\mate{u}{+}\in V(vZu)$. This is, however, excluded by our choice of $Z$:
    the cycle $Z$ of length 9 would have at least three chords,
    implying $\size{\bd Z}\in\Setx{1,3}$, which is impossible.
  \end{xcase}


  \begin{xcase}{$E(M[U]) =
      \Setx{u_{-2}v_-,u_{+2}v_+}$.}\label{case:cross}%
    We will call a chord $f$ of $Z$ \emph{bad} if
    $f\in\Setx{u_-u_{+3},u_+u_{-3},u_-v_{-2},u_+v_{+2}}$.

    \begin{xxcase}{Neither $uZv$ nor $vZu$ is short and $Z$ has no bad
      chord.}\label{subc:8-noshort-nobad}%
    The event $ABD^+$ has one sensitive pair, namely $(v_+,u_{-2})$
    (see Figure~\ref{fig:8-noshort-nobad}). We claim that this pair is
    2-free. Suppose not; then it must be that $\mate{u}{+}$ is an internal
    vertex of $v_+Zu_{-2}$ and there is exactly one other internal
    vertex in the path. This would mean that the edge of $M$ incident
    with $u_+$ is a bad chord, contrary to the assumption. Hence
    $(v_+,u_{-2})$ is 2-free in $ABD^+$ and $\prob{ABD^+}\geq 3/4
    \cdot 1/256$ as the weight of $ABD^+$ is 8.

    \begin{figure}
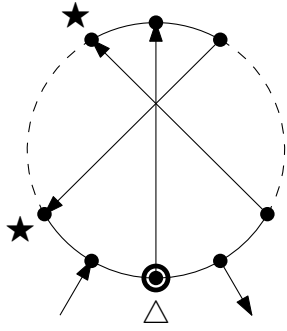

      \centering
      \hf\fig{98}\hf
      \caption{The event $ABD^+$ used in
        Subcase~\ref{subc:8-noshort-nobad} of the proof of
        Proposition~\ref{p:32}.}
      \label{fig:8-noshort-nobad}
    \end{figure}

    For a similar reason (using the symmetry in the definition of a
    bad chord), $\prob{BAD^-} \geq 3/4 \cdot 1/256$. Since $v$ is not
    deficient in this subcase, it suffices to find a further $0.5/256$
    to reach the desired bound.
    
    Suppose first that $\mate{u}{-}$ and $\mate{u}{+}$ are not neighbours. 
    
    If $\mate{u}{-}$ and $\mate{u}{+}$ are contained in two distinct cycles of $F$,
    both different from $Z$, then by Lemma~\ref{l:dep}, we have
    $\prob{C_1C_1D^+} \geq 39/40 \cdot 0.5/256$ and the same estimate
    holds for $C_1C_1D^0$ and $C_1C_1D^-$. Thus
    \begin{equation*}
      \prob{C_1C_1D^+ \cup C_1C_1D^0 \cup C_1C_1D^-} \geq \frac{1.46}{256},
    \end{equation*}
    much more than the required amount.

    If $\mate{u}{+}$ and $\mate{u}{-}$ are contained in the same cycle $Z'\neq Z$ of
    $F$, then the event $C_1C_1D^+$ is covered by $\pair{\mate{u}{+}}{\mate{u}{-}}2$
    and $\pair{\mate{u}{-}}{\mate{u}{+}}2$. By Lemma~\ref{l:dep}, $\prob{C_1C_1D^+}
    \geq 1/2 \cdot 0.5/256$. Since the same holds for $C_1C_1D^0$ and
    $C_1C_1D^-$, we find a sufficient contribution of $0.75/256$.

    If, say, $\mate{u}{+}$ is contained in $Z$ and $\mate{u}{-}$ is not, then
    $C_1C_1D^+$ is covered by the pairs $\pair{v_+}{\mate{u}{+}}{2}$ and
    $\pair{\mate{u}{-}}{\mate{u}{-}}{4}$ (note that the first pair is 2-free since
    $\mate{u}{+}\neq v_{+2}$ by the absence of bad chords). Using
    Lemma~\ref{l:dep}, we find that $\prob{C_1C_1D^+} \geq 59/80 \cdot
    0.5/256 > 0.36/256$. Similarly, $C_1AD^-$ is covered by
    $\pair{u_{+2}}{v_-}{2}$ and $\pair{\mate{u}{-}}{\mate{u}{-}}{4}$, so by
    Lemma~\ref{l:dep}, $\prob{C_1AD^-} \geq 59/80\cdot 0.5/256 >
    0.36/256$. Thus,
    \begin{equation*}
      \prob{C_1C_1D^+ \cup C_1AD^-} \geq \frac{0.36 + 0.36}{256} =
      \frac{0.72}{256}
    \end{equation*}
    and we are done.

    Thus, still in the case that $\mate{u}{+}$ and $\mate{u}{-}$ are not neighbours,
    we may assume that they are both contained in $Z$. Consider the
    event $AC_1D^+$. If $\mate{u}{-} \in V(vZu)$, then the event is covered
    by a single 2-free pair, namely $(v_+,\mate{u}{+})$ or $(\mate{u}{+},u_{-2})$,
    so $\prob{AC_1D^+} \geq 3/4 \cdot 0.5/256$. On the other hand, if
    $\mate{u}{-}\in V(uZv)$, then $AC_1D^+$ is regular if $\mate{u}{+}\in V(uZv)$
    or covered by $\pair{v_+}{\mate{u}{+}} 2$ and $\pair{\mate{u}{+}}{u_{-2}}2$
    otherwise. Summing up, $\prob{AC_1D^+} \geq 1/2\cdot
    0.5/256$. Symmetrically, $\prob{C_1AD^-} \geq 1/2\cdot 0.5/256$
    and we have found the necessary $0.5/256$.

    We may thus assume that $\mate{u}{-}$ and $\mate{u}{+}$ are neighbours.

    If they are in contained in a cycle of $F$ different from $Z$,
    then the event $C_1AD^-$ is covered by the 2-free pair
    $(u_{+2},v_-)$, so $\prob{C_1AD^-}\geq 3/4 \cdot 0.5/256$. By
    symmetry, $\prob{AC_1D^+} \geq 3/4\cdot 0.5/256$, making for a
    sufficient contribution of $1.5/256$.

    We may thus suppose that $\mate{u}{-}$ and $\mate{u}{+}$ are both contained in
    $vZu$. By the absence of bad chords, $\mate{u}{+}$ is not a neighbour of
    $v_+$ nor $u_{-2}$. Thus, the event $AC_1D^+$ is covered by a
    single 2-free pair, namely $(v_+,\mate{u}{+})$ or $(\mate{u}{+},u_{-2})$, and
    $\prob{AC_1D^+} \geq 3/4 \cdot 0.5/256$. Moreover, $\prob{C_1AD^-}
    \geq 3/4\cdot 0.5/256$ since the event is covered by $\pair
    {u_{+2}} {v_-} 1$, so
    \begin{equation*}
      \prob{AC_1D^+ \cup C_1AD^-} \geq \frac{0.375 + 0.375}{256} =
      \frac{0.75}{256}
    \end{equation*}
    as required. This finishes Subcase~\ref{subc:8-noshort-nobad}.
    \end{xxcase}

    \begin{figure}
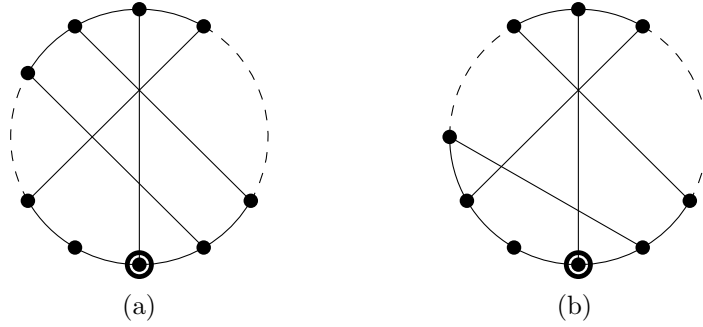

      \centering
      \hf\sfig{99}{}\hf\sfig{100}{}\hf
      \caption{The possibilities in Subcase~\ref{subc:8-noshort-bad}
        of the proof of Proposition~\ref{p:32}.}
      \label{fig:8-noshort-bad}
    \end{figure}

    \begin{xxcase}{Neither $uZv$ nor $vZu$ is short, but $Z$ has a bad
        chord.}\label{subc:8-noshort-bad}%
      By symmetry, we may assume that $u_+v_{+2}$ or $u_+u_{-3}$ is a
      bad chord of $Z$ (see Figure~\ref{fig:8-noshort-bad}).

      Consider first the possibility that $u_+v_{+2} \in E(M)$. By
      Lemma~\ref{l:split-cycle}(i), $uv$ and $u_{-2}v_+$ are the only
      two chords of $Z$ with one endvertex in $v_{+3}Zu$ and the other
      one in $u_{+3}Zv$. In particular, $\mate{u}{-}\notin V(uZv)$.

      We will use the events $BAD^-$, $ABD^+$, $C_1C_1D^-$ and
      $C_1AD^-$. Let us estimate their probabilities. The event
      $BAD^-$ of weight 8 is covered by the pair $(u_{+2},v_-)$, which
      is 2-free as $uZv$ is not short. Thus, $\prob{BAD^-} \geq 3/4
      \cdot 1/256$ by Lemma~\ref{l:dep}. Similarly, $ABD^+$ is covered
      by the pair $\pair{v^+}{u_{-2}}{1}$ and therefore $\prob{ABD^+}
      \geq 1/2\cdot 1/256$. The event $C_1C_1D^-$ of weight 9 is
      covered by $\pair{\mate{u}{-}}{\mate{u}{-}}{4}$, implying $\prob{C_1C_1D^-}
      \geq 79/80 \cdot 0.5/256$. Finally, the event $C_1AD^-$ of
      weight 9 is covered by the pairs $\pair{\mate{u}{-}}{\mate{u}{-}}{4}$ and
      $\pair{u_{+2}}{v_-}{2}$ (the latter of which is, again, 2-free
      since $uZv$ is not short). By Lemma~\ref{l:dep}, $\prob{C_1AD^-}
      \geq 59/80 \cdot 0.5/256$. Summarizing,
      \begin{equation*}
        \prob{BAD^-\cup ABD^+\cup C_1C_1D^-\cup C_1AD^-} >
        \frac{0.75 + 0.5 + 0.49 + 0.36}{256} = \frac{2.1}{256},
      \end{equation*}
      which is sufficient.

      We may therefore assume that $u_+u_{-3}$ is a bad chord
      (Figure~\ref{fig:8-noshort-bad}(b)). The length of $vZu$ is at
      least 6, as can be seen by considering the cycles $u_{-3}Zu_+$
      and $u_{+2}Zv_+$ and using
      Lemma~\ref{l:split-cycle}(ii). Furthermore,
      Lemma~\ref{l:split-cycle}(i) implies that $\mate{u}{-}\neq u_{-4}$,
      since otherwise the cycles $u_+Zv_-u_{-2}u_{-3}$ and
      $u_-uvZu_{-4}$ would provide a contradiction.

      We distinguish three cases based on the position of
      $\mate{u}{-}$. Assume first that $\mate{u}{-}$ is contained in $vZu$. The
      regular event $ABD^+$ has probability at least $1/256$. The event
      $BAD^-$ is covered by the pair $(u_{+2},v_-)$ which is 2-free
      since $uZv$ is not short. Thus $\prob{BAD^-}\geq 3/4\cdot
      1/256$. Finally, the event $C_1C_1D^-$ is covered by the pair
      $(\mate{u}{-},u_{-3})$ which is 2-free since $\mate{u}{-}\neq u_{-4}$ as noted
      above. Consequently,
      \begin{equation*}
        \prob{ABD^+ \cup BAD^- \cup C_1C_1D^-} \geq \frac1{256} +
        \frac{0.75}{256} + \frac{0.75\cdot 0.5}{256} =\frac{2.125}{256},
      \end{equation*}
      more than the required $2/256$.

      Suppose next that $\mate{u}{-}$ is contained in $uZv$. Note that $\mate{u}{-}
      \neq v_{-2}$ by Lemma~\ref{l:split-cycle}(i). Since the event
      $ABD^+$ is covered by $\pair{v_+}{u_{-2}}{1}$, $\prob{ABD^+}
      \geq 1/2 \cdot 1/256$. Similarly, $BAD^-$ is covered by
      $\pair{u_{+2}}{v_-}{1}$ and so $\prob{BAD^-} \geq 1/2 \cdot
      1/256$. The event $C_1C_1D^-$ is covered by the 2-free pair
      $(v_+,u_{-3})$ and thus $\prob{C_1C_1D^-}\geq 3/4 \cdot 0.5/256
      = 0.375/256$. The same bound is valid for $C_1C_1D^+$. Finally,
      $\prob{C_1C_1D^0} \geq 1/2 \cdot 0.5/256$ as the event is
      covered by $\pair{\mate{u}{-}}{v_-}{2}$ and
      $\pair{v_+}{u_{-3}}{2}$. Altogether, we have
      \begin{multline*}
        \prob{ABD^+ \cup BAD^- \cup C_1C_1D^- \cup C_1C_1D^+ \cup
          C_1C_1D^0} \geq\\\frac{0.5+0.5+0.375+0.375+0.25}{256} = \frac2{256}.
      \end{multline*}

      The last remaining possibility is that $\mate{u}{-}$ is not contained
      in $Z$. We have $\prob{ABD^+} \geq 0.5/256$ and $\prob{BAD^-}
      \geq 0.75/256$ by standard arguments. The event $C_1C_1D^-$ is
      covered by the pair $\pair{\mate{u}{-}}{\mate{u}{-}}{4}$, so
      $\prob{C_1C_1D^-}\geq 79/80 \cdot 0.5/256 > 0.49/256$ by
      Lemma~\ref{l:dep}. Similarly, $\prob{C_1C_1D^+} \geq 59/80 \cdot
      0.5/256 > 0.36/256$ since the event is covered by
      $\pair{\mate{u}{-}}{\mate{u}{-}}{4}$ and $\pair{v_+}{u_{-3}}{2}$. The total
      contribution is at least $2.1/256$. This concludes
      Subcase~\ref{subc:8-noshort-bad}.
    \end{xxcase}

    We may now assume that the path $uZv$ is short; note that this
    means that $u$ is deficient of type Ia or Ib. In the former case,
    there is nothing to prove as $2 + \eps(u) = 0$. Therefore, suppose
    that $u$ is of type Ib (i.e., $\mate{u}{+} = v_{+2}$). Since $\eps(u) =
    -1.5$, it remains to find events forcing $u$ with total
    probability at least $0.5/256$. It is sufficient to consider the
    event $ABD^+$ of weight 8, which is covered by the 1-free pair
    $(v_+,u_{-2})$, and therefore $\prob{ABD^+} \geq 0.5/256$ by
    Lemma~\ref{l:dep}. This finishes the proof of
    Case~\ref{case:cross} and the whole proposition.
  \end{xcase}
\end{proof}


\section{Augmentation}
\label{sec:phase5}

In this section, we show that it is possible to apply the augmentation
step mentioned in the preceding sections. 

Suppose that $u$ is a deficient vertex of $G$ and $v=u'$. Let us
continue to use $Z$ to denote the cycle of the 2-factor $F$ containing
$u$. The \emph{sponsor} $s(u)$ of $u$ is one of its neighbours,
defined as follows:
\begin{itemize}
\item if $u$ is deficient of type 0 (recall that this type was defined
  at the beginning of Section~\ref{sec:nochord}), then $s(u)$ is the
  $F$-neighbour $u$ with $\eps(s(u))=1$; if there are two such
  $F$-neighbours, we choose $s(u)=u_-$,
\item if $u$ is deficient of any other type (in particular, $v\in
  V(Z)$), then $s(u) = v$.
\end{itemize}

\begin{observation}
  Every vertex is the sponsor of at most one other vertex.
\end{observation}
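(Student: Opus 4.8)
The plan is to fix an arbitrary vertex $s$ and to bound the number of deficient vertices $u$ with $s(u)=s$; write $Z_x$ for the cycle of $F$ containing a vertex $x$. By the definition of the sponsor, such a $u$ is of one of two kinds: (type B) $u$ is deficient of a type other than $0$, and then $s=u'$ with $s\in V(Z_u)$ (for a non-zero type the edge $uu'$ is a chord of $Z_u$); or (type A) $u$ is deficient of type $0$, and then $s$ is an $F$-neighbour of $u$ and, by the selection rule, $\eps(s)=1$. Since a vertex has a unique mate, at most one $u$ can be of type B, namely $u=s'$; and since the $F$-neighbours of $s$ are exactly $s_-$ and $s_+$, at most two $u$ can be of type A, and two distinct such $u$ must be precisely $s_-$ and $s_+$. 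Hence it suffices to rule out two configurations: (1) both $s_-$ and $s_+$ are type-A vertices for $s$; and (2) one type-A vertex and one type-B vertex have sponsor $s$.

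I would first record a characterisation of the value $1$ of $\eps$, obtained by inspecting its definition: for any vertex $x$, $\eps(x)=1$ if and only if $x'$ lies on a cycle of $F$ distinct from $Z_x$ and the edge $xx'$ lies on a $4$-cycle of $G$. (If $x'$ is on a different cycle this is exactly the ``non-chord'' clause; if $x'\in V(Z_x)$, then $\eps(x)$ equals an entry in the last column of Table~\ref{tab:chord-type}, the negative of such an entry, or $0$, and none of these is $1$.) Configuration (2) is then immediate: a type-B vertex $u_1$ satisfies $u_1'=s$, so the mate $s'=u_1$ of $s$ lies on $Z_s$, and hence $\eps(s)\ne 1$ by the characterisation, contradicting the equality $\eps(s)=1$ forced by the type-A vertex.

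Configuration (1) --- which I expect to be the main obstacle --- is excluded by the following structural lemma: if $\eps(s)=1$, then $\eps(s_-)=1$ or $\eps(s_+)=1$. To see this, let $H$ be a $4$-cycle containing the edge $ss'$. Besides $ss'$, the cycle $H$ has exactly one more edge at $s$, and it is an $F$-edge since $ss'$ is the unique $M$-edge at $s$; thus $H$ meets $\{s_-,s_+\}$ in a single vertex, say $s_-$ (the case $s_+$ is symmetric). Likewise the other edge of $H$ at $s'$ is an $F$-edge, so the fourth vertex $y$ of $H$ is an $F$-neighbour of $s'$ and lies on $Z_{s'}$. Then the remaining edge $s_-y$ of $H$ joins the two distinct cycles $Z_s$ and $Z_{s'}$, so it must be an $M$-edge, forcing $y=(s_-)'$. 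Hence $(s_-)'=y$ lies on $Z_{s'}$, which differs from $Z_{s_-}=Z_s$, and $s_-(s_-)'$ lies on the $4$-cycle $H$, so $\eps(s_-)=1$ by the characterisation. With the lemma in hand, configuration (1) is impossible: $s_-$ and $s_+$ would both be deficient of type $0$ and hence satisfy $\eps(s_-)=\eps(s_+)=-1$, whereas $\eps(s)=1$ forces one of the two values to equal $1$. Since type B allows at most one vertex and both (1) and (2) are excluded, $s$ is the sponsor of at most one vertex. The one genuinely nonroutine ingredient is the rigidity of a $4$-cycle that carries an $M$-edge running between two different cycles of $F$ --- it must be an ``alternating rectangle'' made of two $M$-edges and two $F$-edges --- which is exactly what drives the structural lemma.
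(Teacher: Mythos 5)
Your proposal is correct and takes essentially the same route as the paper: it reduces the claim to the same two forbidden configurations (a vertex sponsoring its mate together with an $F$-neighbour, or sponsoring both $F$-neighbours) and derives the contradiction from the structure of a $4$-cycle through the matching edge at the sponsor, using the fact that $\eps(x)=1$ forces $x'$ onto a different cycle of $F$. Your ``alternating rectangle'' lemma simply spells out in detail the step the paper states tersely (that a $4$-cycle through $uv$ must also contain the matching edge at $u_-$ or $u_+$, contradicting type~0 deficiency), so the two arguments coincide in substance.
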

\begin{proof}
  Clearly, a given vertex can only sponsor its own neighbours, that
  is, its mate and $F$-neighbours. Suppose that $u$ is the sponsor of
  its mate $v$; thus, $u\in C_v$. Suppose also that $u$ is the sponsor
  of one of its $F$-neighbours, say $u_+$. Then $uv$ belongs to a
  4-cycle intersecting $C_{\mate{u}{+}}$, but this is not possible since
  $C_{\mate{u}{+}} \neq C_v$.
 
  The only remaining possibility is that $u$ is the sponsor of both of
  its $F$-neighbours. In that case, both $u_+$ and $u_-$ are deficient
  of type 0 and $\eps(u) = 1$. Thus, $uv$ is contained in a 4-cycle,
  but neither $u_+$ or $u_-$ is, giving rise to a contradiction.
\end{proof}

Recall that $N[u]$ denotes the closed neighbourhood of $u$, i.e.,
$N[u] = N(u)\cup \Setx u$. An independent set $J$ in $G$ is
\emph{favourable} for $u$ if $N[u]\cap J = \Setx{s(u)}$. The
\emph{receptivity} of $u$, denoted $\rho(u)$, is the probability that
a random independent set (with respect to the distribution given by
Algorithm 1) is favourable for $u$. We say that $u$ is
\emph{$k$-receptive} ($k\geq 0$) if the receptivity of $u$ is at least
$k/256$.

For an independent set $J$, we let $p(J)$ denote the probability that
the random independent set produced by Algorithm 1 is equal to $J$.
We fix an ordering $J_1,\dots,J_s$ of all independent sets $J$ in $G$
such that $p(J) > 0$. Furthermore, an ordering $u_1,\dots,u_r$ of all
deficient vertices is chosen in such a way that $\size{\eps(u_i)} \leq
\size{\eps(u_j)}$ if $1\leq i < j \leq s$ (to which we refer as the
\emph{monotonicity} of the ordering).

Let $u_i$ be a deficient vertex. We let $\nbrx{u_i}$ be the set of all
deficient neighbours $u_j$ of $u_i$ such that $j < i$; furthermore, we
put $\nbrxc{u_i} = \nbrx{u_i} \cup \Setx{u_i}$. We define $\eta(u_i)$
as
\begin{equation*}
  \eta(u_i) = \sum_{u_j\in\nbrxc{u_i}} \size{\eps(u_j)}.
\end{equation*}

We aim to replace $s(u_i)$ with $u_i$ in some of the independent sets
that are favourable for $u_i$, thereby boosting the probability of the
inclusion of $u_i$ in the random independent set $I$. Clearly, this
requires that the receptivity of $u_i$ is at least $\size{\eps(u_i)}/256$,
for otherwise the probability of $u_i\in I$ cannot be increased to the
required $88/256$ in this way. We also need to take into account the
fact that an independent set may be favourable for $u_i$ and its
neighbour at the same time, but the replacement can only take place
once. To dispatch the replacements in a consistent way, the following
lemma will be useful. We remark that the number $p(u_i,J_j)$ which
appears in the statement will turn out to be the probability that
$u_i$ is added to the random independent set during Phase 5 of the
execution of the algorithm.

\begin{lemma}
  \label{l:events}
  If the receptivity of each deficient vertex $u_i$ is at least
  $\eta(u_i)$, then we can choose a nonnegative real number
  $p(u_i,J_j)$ for each deficient vertex $u_i$ and each independent
  set $J_j$ in such a way that the following holds:
  \begin{enumerate}[\quad(i)]
  \item $p(u_i,J_j) = 0$ whenever $J_j$ is not favourable for $u_i$,
  \item for each deficient vertex $u_i$, $\sum_j p(u_i,J_j)\cdot
    p(J_j) = \size{\eps(u_i)}/256$,
  \item for each independent set $J_j$ and deficient vertex $u_i$,
    $\sum_{u_t\in \nbrxc{u_i}} p(u_t,J_j) \leq 1$.
  \end{enumerate}
\end{lemma}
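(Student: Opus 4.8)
I would prove Lemma~\ref{l:events} by building the numbers $p(u_i,J_j)$ greedily, processing the deficient vertices in the order $u_1,\dots,u_r$ fixed above, and maintaining the invariant~(iii) throughout. The intuition is that $\size{\eps(u_i)}/256$ is a ``demand'' that vertex $u_i$ must extract from the favourable independent sets, while $\rho(u_i)\ge\eta(u_i)/256$ guarantees that there is enough total probability mass among sets favourable for $u_i$ — even after all the earlier vertices in $\nbrx{u_i}$ have taken their share. The role of the monotonicity of the ordering is precisely to ensure that when it is $u_i$'s turn, the neighbours that have already grabbed some mass are exactly those in $\nbrx{u_i}$, and each of them has $\size{\eps(u_j)}\le\size{\eps(u_i)}$, so $\eta(u_i)$ is an honest upper bound on the mass they could have consumed from sets also favourable for $u_i$.

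\textbf{The induction.} Formally, I would show by induction on $i$ that one can choose $p(u_1,\cdot),\dots,p(u_i,\cdot)$ satisfying (i), (ii) for $u_1,\dots,u_i$, and (iii) for every $J_j$ and every $u_t$ with $t\le i$. For the inductive step, fix $i$ and suppose $p(u_t,\cdot)$ has been chosen for $t<i$. For each favourable set $J_j$ for $u_i$, the ``remaining capacity'' at $J_j$ seen by $u_i$ is
\begin{equation*}
  c_j \;=\; 1 - \sum_{u_t\in\nbrx{u_i}} p(u_t,J_j)\;\ge\;0,
\end{equation*}
which is nonnegative by the inductive form of (iii) (note $\nbrx{u_i}\subseteq\{u_1,\dots,u_{i-1}\}$ since a deficient neighbour with smaller index lies in $\nbrx{u_i}$). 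I need to pick $0\le p(u_i,J_j)\le c_j$ with $\sum_j p(u_i,J_j)p(J_j)=\size{\eps(u_i)}/256$; setting $p(u_i,J_j)=0$ for non-favourable $J_j$ then preserves (iii) for $u_i$ itself (the sum over $\nbrxc{u_i}$ is $\le 1$ at favourable sets by construction, and unchanged elsewhere). Such a choice exists provided
\begin{equation*}
  \sum_{j:\,J_j \text{ favourable for } u_i} c_j\, p(J_j)\;\ge\;\frac{\size{\eps(u_i)}}{256};
\end{equation*}
one can take, e.g., $p(u_i,J_j)=\lambda c_j$ on favourable sets for a suitable $\lambda\in[0,1]$, using continuity/the intermediate value property of the linear function $\lambda\mapsto\lambda\sum c_j p(J_j)$.

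\textbf{Verifying the capacity inequality — the main point.} The heart of the proof is the displayed inequality above. The total probability of sets favourable for $u_i$ is exactly $\rho(u_i)\ge\eta(u_i)/256$. The deficit $\sum_j (1-c_j)p(J_j)=\sum_{u_t\in\nbrx{u_i}}\sum_j p(u_t,J_j)p(J_j)$, restricted to $J_j$ favourable for $u_i$, is at most $\sum_{u_t\in\nbrx{u_i}}\sum_j p(u_t,J_j)p(J_j)=\sum_{u_t\in\nbrx{u_i}}\size{\eps(u_t)}/256$ by property (ii) for the earlier vertices. Hence
\begin{equation*}
  \sum_{j:\,J_j\text{ fav. for }u_i} c_j\,p(J_j)\;\ge\;\rho(u_i)-\sum_{u_t\in\nbrx{u_i}}\frac{\size{\eps(u_t)}}{256}\;\ge\;\frac{\eta(u_i)-\sum_{u_t\in\nbrx{u_i}}\size{\eps(u_t)}}{256}\;=\;\frac{\size{\eps(u_i)}}{256},
\end{equation*}
using the definition $\eta(u_i)=\sum_{u_t\in\nbrxc{u_i}}\size{\eps(u_t)}$. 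This closes the induction. I expect the only genuinely delicate point to be making sure the bookkeeping of indices is right — that $\nbrx{u_i}$ captures precisely the already-processed deficient neighbours and that the monotonicity assumption is what lets $\eta$ bound the consumed mass — but once that is pinned down the argument is a routine greedy/LP-feasibility computation. (A cleaner alternative phrasing is a Hall-type / max-flow argument on the bipartite structure between deficient vertices and independent sets, but the sequential construction above seems shortest given that the ordering is already at hand.)
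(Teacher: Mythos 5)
Your proposal is correct and follows essentially the paper's own argument: the same sequential/greedy assignment over the monotone ordering of deficient vertices, with feasibility at each step obtained from property (ii) for the already-processed rows together with the hypothesis $\rho(u_i)\geq\eta(u_i)/256$, and with the column constraint over $\nbrxc{u_i}$ maintained exactly as in the paper. The only cosmetic difference is that you verify the capacity inequality directly and fill the row proportionally (via the intermediate value argument), whereas the paper fills each row maximally column by column via the explicit min-formula and obtains the same inequality by a short contradiction argument.
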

\begin{proof}
  We may view the numbers $p(u_i,J_j)$ as arranged in a matrix (with
  rows corresponding to vertices) and choose them in a simple greedy
  manner as follows. For each $i=1,\dots,r$ in this order, we
  determine $p(u_i,J_1)$, $p(u_i,J_2)$ and so on. Let $\vec{r_i}$ be
  the $i$-th row of the matrix, with zeros for the entries that are
  yet to be determined. Furthermore, let $\vec p =
  (p(J_1),\dots,p(J_s))$.

  For each $i,j$ such that $J_j$ is favourable for $u_i$, $p(u_i,J_j)$
  is chosen as the maximal number such that $\vec{r_i}\cdot\vectrans p
  \leq \size{\eps(u_i)}/256$, and its sum with any number in the
  $j$-th column corresponding to a vertex in $\nbrx{u_i}$ is at most
  one. In other words, we set
  \begin{equation}\label{eq:prob}
    p(u_i,J_j) =
    \min\Bigl(\frac{\size{\eps(u_i)}/256 - \sum_{\ell=1}^{j-1}
      p(u_i,J_\ell)\cdot p(J_\ell)}{p(J_j)},
    1 - \sum_{u_\ell\in\nbrx{u_i}} p(u_\ell,J_j)\Bigr)
  \end{equation}
  if $J_j$ is favourable for $u_i$, and $p(u_i,J_j) = 0$
  otherwise. Note that the denominator in the fraction is nonzero
  since every independent set $J_j$ with $1\leq j\leq s$ has $p(J_j) >
  0$. By the construction, properties (i) and (iii) in the lemma are
  satisfied, and so is the inequality $\vec{r_i} \cdot \vectrans p
  \leq \size{\eps(u_i)}/256$ in property (ii). We need to prove the
  converse inequality.

  Suppose that for some $i$, $\vec{r_i} \cdot \vectrans p$ is strictly
  smaller than $\size{\eps(u_i)}/256$. This means that
  in~\eqref{eq:prob}, for each $j$ such that $J_j$ is favourable for
  $u_i$, $p(u_i,J_j)$ equals the second term in the outermost pair of
  brackets. In other words, for each such $j$, we have
  \begin{equation*}
    \sum_{u_\ell\in\nbrxc{u_i}} p(u_\ell,J_j) = 1.
  \end{equation*}
  Thus, we can write
  \begin{align}\label{eq:rowsums}
    \sum_{\stackrel{j}{J_j\text{ favourable for }u_i}} \Bigl(
    \sum_{u_\ell\in\nbrxc{u_i}} p(u_\ell,J_j) \Bigr) \cdot p(J_j) &=
    \sum_{\stackrel{j}{J_j\text{ favourable for }u_i}} p(J_j) \\
    &=\rho(u_i) \geq \eta(u_i) = \sum_{u_\ell\in
      \nbrxc{u_i}}\frac{\size{\eps(u_\ell)}}{256},\notag
  \end{align}
  where the inequality on the second line follows from our assumption
  on the receptivity of $u_i$.

  On the other hand, the expression on the first line of
  \eqref{eq:rowsums} is dominated by the sum of the scalar products of
  $\vec p$ with the rows corresponding to vertices in
  $\nbrxc{u_i}$. For each such vertex $u_\ell$, we know from the first
  part of the proof that $\vec{r_\ell}\cdot \vectrans p \leq
  \size{\eps(u_\ell)}/256$. Comparing with \eqref{eq:rowsums}, we find
  that we must actually have equality both here and in
  \eqref{eq:rowsums}; in particular,
  \begin{equation*}
    \vec{r_i}\cdot\vectrans p = \frac{\size{\eps(u_i)}}{256},
  \end{equation*}
  a contradiction.
\end{proof}

For brevity, we will say that an event $X\subseteq\Omega$ is
\emph{favourable for $u$} if the independent set $I(\sigma)$ is
favourable for $u$ for every situation $\sigma\in X$. We lower-bound
the receptivity of deficient vertices as follows:

\begin{proposition}
  \label{p:receptive}
  Let $u$ be a deficient vertex. The following holds:
  \begin{enumerate}[\quad(i)]
  \item $u$ is $1.9$-receptive,
  \item if $u$ is of type 0, then it is $3$-receptive,
  \item if $u$ is of type Ia or Ib (or their mirror types), then it
    is $8$-receptive.
  \end{enumerate}
\end{proposition}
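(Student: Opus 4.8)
The plan is to prove each part by exhibiting, for the deficient vertex $u$, a family of pairwise disjoint events — each given by a template in the sense of Section~\ref{sec:events} and each favourable for $u$ — whose probabilities, estimated via Lemma~\ref{l:dep}, sum to the required multiple of $1/256$. The role previously played by ``events forcing $u$'' is now played by events that simultaneously force the sponsor $s(u)$ into $I$ and force the remaining vertices of $N[u]$ out of $I$. Note that $u$ itself needs no separate treatment: since $s(u)$ is adjacent to $u$, any event forcing $s(u)\in I$ already forces $u\notin I$.

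First I would record the two elementary building blocks. To force $s(u)\in I$ we can reuse the templates of Section~\ref{sec:forcing} applied to $s(u)$ in place of $u$ (the analogues of $E^0,E^\pm$, etc.); the cheapest is the weight-$3$ template making $s(u)$ active with both its $F$-neighbours inactive, so that $s(u)$ forms an active run of length $1$ and is added in Phase~2. To force a vertex $w\in N[u]\setminus\Setx{s(u)}$ out of $I$, note that if $w$ is active it is never feasible, so it suffices that $w$ has an active $F$-neighbour and $w\in\Delta^{\bar 1}$ (which makes its active run have length $\ge 2$, so the Phase-2 proviso does not bite); alternatively, if $w$ is inactive it is enough to place a neighbour of $w$ in $\sigma^1$. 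Each such ``anti-forcing'' gadget costs weight about $3$, though often one of its arcs is shared with a neighbouring gadget or is already forced by the local structure.

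Then I would split into the case $s(u)=v$ (all types other than $0$) and the case where $s(u)$ is an $F$-neighbour of $u$ (type $0$), and within each run a case analysis on the local structure of $G$ near $u$, as recorded in Table~\ref{tab:chord-type} and Figures~\ref{fig:deficient}--\ref{fig:v-deficient} (respectively, on the $4$-cycle through $s(u)\,s(u)'$ guaranteed by $\eps(s(u))=1$). Because deficiency pins down many edges of $M$ around $u$, the arcs needed to force $v$ and to anti-force $u_-$ and $u_+$ largely coincide or are already determined, so that a single template of weight roughly $7$--$9$ favourable for $u$ already gives about $1/128$--$1/512$; collecting the handful of disjoint such templates compatible with each structural subcase yields the bound $1.9/256$ of part~(i). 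For part~(ii), the extra $4$-cycle structure available to a type-$0$ vertex lets us lower the weight of, or add, favourable templates, raising the total to $3/256$; for part~(iii), the types Ia and Ib (and their mirrors) come with an almost completely determined neighbourhood (namely $\size{uZv}=4$ together with the prescribed $M$-edges), which makes essentially all relevant templates regular and of small weight, so their probabilities add to at least $8/256$.

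The main obstacle will be the bookkeeping in part~(i): one must check, subcase by subcase, that the chosen templates are valid in $G$ (consistent with $G$ being simple and triangle-free and with the known local structure), that the anti-forcing conditions genuinely exclude $u_-$ and $u_+$ in \emph{every} conforming situation (the provisos about active/feasible runs of length $1$ being added in Phases~2 and~4, and about additions in Phases~3--4, require care), and that the sensitive pairs are accounted for correctly when applying Lemma~\ref{l:dep}. As in the proofs of Lemma~\ref{l:nochord} and Proposition~\ref{p:32}, Lemma~\ref{l:split-cycle} and the choice of $F$ via Theorem~\ref{t:ks} will be invoked repeatedly to rule out otherwise problematic chord configurations.
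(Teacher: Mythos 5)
Your proposal takes essentially the same route as the paper's proof: for each type of deficient vertex one exhibits favourable events, i.e.\ templates that force the sponsor $s(u)$ into $I$ while excluding the rest of $N[u]$, bounds their probabilities via Lemma~\ref{l:dep}, and exploits the $M$-edges prescribed by the deficiency to keep the weights small (the paper realises this with events $Q_1$--$Q_4$ for part (i), $R$, $R^{\pm}$, $S^{\pm}$, $T^{\pm}$ for part (ii), and a single regular weight-5 event for part (iii)). Your forcing/anti-forcing building blocks and the anticipated weights and totals are sound and match what the paper's case analysis actually delivers, so the proposal is correct in approach; what remains is precisely the explicit case-by-case construction carried out in the paper.
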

\begin{proof}
  All the event(s) discussed in this proof will be favourable for $u$,
  as it is easy to check. To avoid repetition, we shall not state this
  property in each of the cases.

  (i) First, let $u$ be a deficient vertex of type I. We distinguish
  three cases, in each case presenting an event which is favourable
  for $u$ and has sufficient probability. If $u_{-2}u_{+2}$ is not an
  edge of $M$, then the event $Q_1$ given by the diagram in
  Figure~\ref{fig:receptive-i}(a) is valid. Since it is a regular
  diagram of weight 7, $\prob{Q_1} \geq 2/256$ by
  Lemma~\ref{l:dep}. Thus, $\rho(u)\geq 2/256$ as $Q_1$ is favourable
  for $u$.

  We may thus assume that $u_{-2}u_{+2}\in E(M)$. Suppose that neither
  $\mate{u}{-}$ nor $\mate{u}{+}$ is contained in $vZu$. Consider the event $Q_2$,
  given by the diagram in Figure~\ref{fig:receptive-i}(b). Since the
  edge $uv$ is not contained in a 4-cycle ($u$ being deficient),
  neither $v_-$ nor $v_+$ is the mate of $u_+$, so the diagram is
  valid. The event is covered by the pair $(\mate{u}{+},\mate{u}{+})$. If the pair
  is sensitive, then the cycle of $F$ containing $\mate{u}{+}$ has length at
  least 5, and hence it contains at least two vertices different from
  $\mate{u}{+}$, $\mate{v}{-}$ and $\mate{v}{+}$. Thus, the pair is 2-free, and we have
  $\prob{Q_2} \geq 19/20 \cdot 2/256 = 1.9/256$ by Lemma~\ref{l:dep}.

  By symmetry, we may assume that each of $uZv$ and $vZu$ contain one
  of $\mate{u}{-}$ and $\mate{u}{+}$. Hence, the event $Q_3$, defined by
  Figure~\ref{fig:receptive-i}(c), is regular and $\prob{Q_3} \geq
  2/256$. (The event is valid for the same reason as $Q_2$.)

  To finish part (i), it remains to discuss deficient vertices of
  types other than I. In view of parts (ii) and (iii), it suffices to
  look at types II, IIa, III and their mirror variants. Each of these
  types is consistent with the diagram in
  Figure~\ref{fig:receptive-i}(d) or its symmetric version. The diagram
  of weight 6 defines a regular event $Q_4$, whose probability is at
  least $4/256$ by Lemma~\ref{l:dep}. This proves part (i).

  \begin{figure}
    \centering%
    \sfigdef{84}%
    \hf\sfigtop{82}{$Q_1$.}\hf\sfigtop{84}{$Q_2$.}\hf\\
    \sfigdef{83}
    \hspace{4mm}\hf\sfigtop{83}{$Q_3$ (note that each of $uZv$, $vZu$ contains one of
      $\mate{u}{+}$, $\mate{u}{-}$).}\hf\hspace{4mm}
    \subfloat[$Q_4.$]{\raisebox{2em}{\fig{56}}}\hspace{-10mm}\hf
    \caption{Events used in the proof of
      Proposition~\ref{p:receptive}(i).}
    \label{fig:receptive-i}
  \end{figure}

  We prove (ii). Let $u$ be deficient of type 0. We may assume that
  $u_-$ is contained in a 4-cycle intersecting the cycle $C_v$; in
  particular, the mates of $u_-$ and $u_{-2}$ are contained in
  $C_v$. By the definition of type 0, we also know that neither
  $u_{-2}$ nor $u_{+2}$ has a neighbour in $\Setx{v_-,v_+}$.
  
  Suppose that the set $\Setx{u_{-2},u_{+2},v_-,v_+}$ is
  independent. Since $\mate{u}{-}\in V(C_v)$, the event $R$ defined by the
  diagram in Figure~\ref{fig:receptive-ii}(a) is regular and it is easy
  to see that it is favourable for $u$ and its probability is at least
  $1/256$. Since the same holds for the events $R^+$ and $R^-$
  obtained by reversing the arrow at $v_-$ or $v_+$, respectively, we
  have shown that $u$ is 3-receptive in this case.
  
  If $M$ includes the edge $u_{-2}v_+$, then both $R$ and $R^+$ remain
  valid events, and the probability of each of them increases to at
  least $2/256$, showing that $u$ is 4-receptive. An analogous
  argument applies if $M$ includes $u_{-2}v_-$.

  It remains to consider the possibility that $u_{+2}v_-$ or
  $u_{+2}v_+$ is in $M$. Suppose that $u_{+2}v_- \in E(M)$. The event
  $R^-$ remains valid and regular; its probability increases to at
  least $2/256$. Let $S^+$ and $T^+$ be the events given by diagrams
  in Figure~\ref{fig:receptive-ii}(b) and (c), respectively. It is easy
  to check that $R^+$, $S^+$ and $T^+$ are pairwise disjoint and
  favourable for $u$. The event $S^+$ is covered by the pair
  $\pair{u_{+2}}{u_-}1$ and Lemma~\ref{l:dep} implies that $\prob{S^+}
  \geq 0.5/256$. The event $T^+$ is regular and $\prob{T^+} \geq
  0.5/256$. Since $\prob{R^+\cup S^+\cup T^+} \geq 3/256$, $u$ is
  3-receptive.

  In the last remaining case, namely $u_{+2}v_+ \in E(M)$, we argue
  similarly. Let $S^-$ and $T^-$ be the events obtained by reversing
  both arcs incident with $v_+$ and $v_-$ in the diagram for $S^+$ or
  $T^+$, respectively. It is routine to check that $\prob{R^-\cup
    S^-\cup T^-} \geq 3/256$ and the events are favourable for
  $u$. Hence, $u$ is 3-receptive. The proof is finished.
  
  \begin{figure}
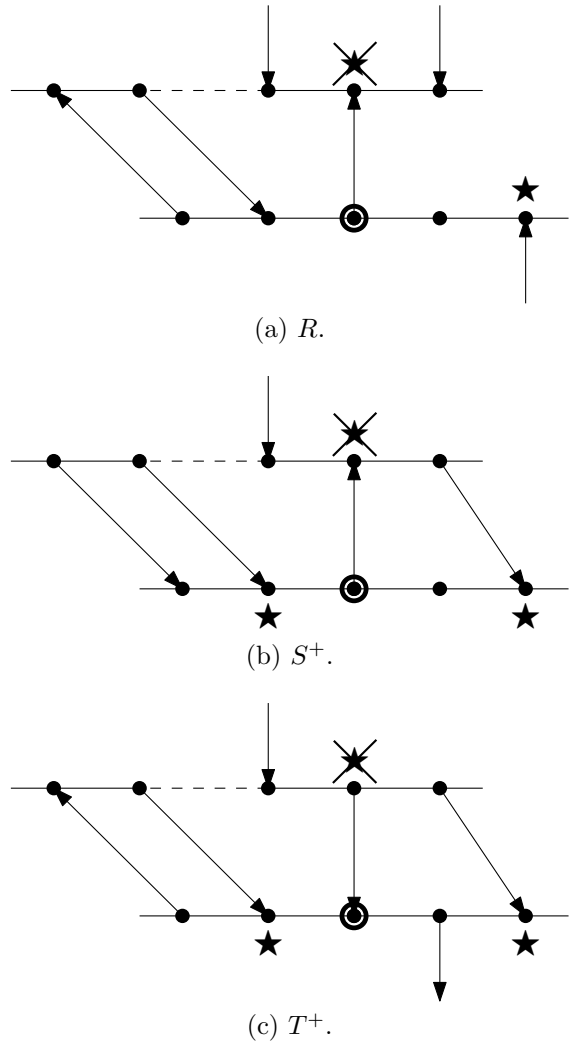

    \centering
    \hf\sfig{57}{$R$.}\hf\\
    \hf\sfig{58}{$S^+$.}\hf\\
    \hf\sfig{59}{$T^+$.}\hf
    \caption{Events used in the proof of
      Proposition~\ref{p:receptive}(ii) for vertices of type 0. Only
      the possibility that $\mate{u}{-2}=(\mate{u}{-})_-$ is shown, but the events
      remain valid if $\mate{u}{-2}=(\mate{u}{-})_+$ (i.e., if the chords of $Z$
      incident with $u_-$ and $u_{-2}$ cross).}
    \label{fig:receptive-ii}
  \end{figure}

  Part (iii) follows by considering the event defined by the diagram in
  Figure~\ref{fig:receptive-iii}. Note that the event is regular and
  its probability is at least $1/2^5 = 8/256$. Furthermore, the event
  is favourable for the vertex $u$. Thus, $u$ is 8-receptive.
  \begin{figure}
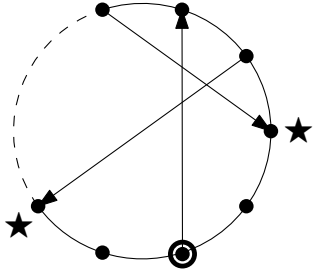

    \centering
    \hf\fig{54}{}\hf
    \caption{The event used in the proof of
      Proposition~\ref{p:receptive}(iii) for vertices of type Ia and
      Ib and their mirror types.}
    \label{fig:receptive-iii}
  \end{figure}
\end{proof}

We now argue that Proposition~\ref{p:receptive} implies the assumption
of Lemma~\ref{l:events} that the receptivity of a deficient vertex
$u_i$ is at least $\eta(u_i)$. By the monotonicity of the ordering
$u_1,\dots,u_r$ and the fact that $\size{\nbrxc{u_i}}\leq 4$ and each
deficient vertex has at least one non-deficient neighbour (namely its
sponsor), we have $\eta(u_i) \leq 3\size{\eps(u_i)}$. From
Proposition~\ref{p:receptive} and the definition of $\eps(u_i)$ (see
the beginning of Section~\ref{sec:nochord} and
Table~\ref{tab:chord-type}), it is easy to check that $u_i$ is
$(3\size{\eps(u_i)})$-receptive, which implies the claim.

Hence, the assumption of Lemma~\ref{l:events} is satisfied. Let
$p(u_i,J_j)$ be the numbers whose existence is guaranteed by
Lemma~\ref{l:events}. We can finally describe Algorithm 2, which
consists of the four phases of Algorithm 1, followed by \textbf{Phase
  5} described below.

Assume a fixed independent set $I=J_j$ was produced by Phase 4 of the
algorithm. We construct a sequence of independent sets
$I^{(0)},\dots,I^{(r)}$. At the $i$-th step of the construction, $u_i$
may or may not be added, and we will ensure that
\begin{equation}
  \label{eq:adding}
  \prob{u_i \text{ is added at $i$-th step}} = p(u_i,J_j).
\end{equation}
At the beginning, we set $I^{(0)} = I$. For $1\leq i \leq r$, we
define $I^{(i)}$ as follows. If $u_i\in I$ or $I$ is not favourable
for $u_i$, we set $I^{(i)} = I^{(i-1)}$. Otherwise,
by~\eqref{eq:adding} and property (iii) of Lemma~\ref{l:events}, the
probability that none of $u_i$'s neighbours has been added before is
at least
\begin{equation*}
  1 - \sum_{u_\ell\in\nbrx{u_i}} p(u_\ell,J_j) \geq p(u_i,J_j).
\end{equation*}
Thus, by including $u_i$ based on a suitably biased independent coin
flip, it is possible to make the probability of inclusion of $u_i$ in
Phase 5 (conditioned on $I=J_j$) exactly equal to $p(u_i,J_j)$. The
output of Algorithm 2 is the set $I' := I^{(r)}$.

We analyze the probability that a deficient vertex $u_i$ is in
$I'$. By Lemma~\ref{l:nochord} and Proposition~\ref{p:32}, 
\begin{equation*}
  \prob{u_i \in I} \geq \frac{88 + \eps(u_i)}{256}.
\end{equation*}
By the above and property (ii) of Lemma~\ref{l:events}, the
probability that $u_i$ is added to $I'$ during Phase 5 equals
\begin{align*}
  \prob{\text{$u_i$ is added in Phase 5}} &= \sum_{j=1}^s
  \cprob{\text{$u_i$ is
      added in Phase 5}}{I=J_j} \cdot \prob{I=J_j}\\
  &= \sum_{j=1}^s p(u_i,J_j)\cdot p(J_j) = \frac{\size{\eps(u_i)}}{256}.
\end{align*}
Since $u_i$ is deficient, $\eps(u_i)<0$; therefore, we obtain
\begin{align*}
  \prob{u_i\in I'} &= \prob{u_i\in I} + \prob{\text{$u_i$ is added in
      Phase 5}} \\
  &\geq \frac{88+\eps(u_i)}{256}-\frac{\eps(u_i)}{256} = \frac{88}{256}.
\end{align*}

If $w$ is a vertex of $G$ which is the sponsor of a (necessarily
unique) deficient vertex $u_i$, then the probability of the removal of
$w$ in Phase 5 is equal to the probability of the addition of $u_i$,
namely $\size{\eps(u_i)}/256$. From Lemma~\ref{l:nochord} and
Proposition~\ref{p:32}, it follows that $\prob{w\in I}$ is high enough
for $\prob{w\in I'}$ to be still greater than or equal to $88/256$.

Finally, if a vertex $w$ is neither deficient nor the sponsor of a
deficient vertex, it is not affected by Phase 5, and hence $\prob{w\in
  I'} \geq 88/256$ as well. Applying Lemma~\ref{l:fraccol} to
Algorithm 2, we infer that $\chi_f(G)\leq 256/88 = 32/11$ as required.


\section{Subcubic graphs}
\label{sec:subcubic}

The generalisation from triangle-free cubic bridgeless graphs to
triangle-free subcubic graphs is perhaps most clear when phrased in
terms of the second equivalent definition of the fractional chromatic
number as given in Lemma~\ref{l:fraccol}.

In Sections~\ref{sec:algorithm}--\ref{sec:phase5}, we showed that for
a bridgeless triangle-free cubic graph $G'$, $\chi_f(G') \leq k :=
32/11$. Therefore, by Lemma~\ref{l:fraccol}, there exists an integer
$N$ such that $kN$ is an integer and we can colour the vertices of
$G'$ using $N$-tuples from $kN$ colours in such a way that adjacent
vertices receive disjoint lists of colours.

We now show that if $G$ is an arbitrary subcubic graph, then
$\chi_f(G)\leq k$. We proceed by induction on the number of vertices
of $G$. The base cases where $\size{V(G)} \leq 3$ are trivial.

Suppose that $G$ has a bridge and choose a block $B_1$ incident with
only one bridge $e$. (Recall that a \emph{block} of $G$ is a maximal
connected subgraph of $G$ without cutvertices.) Let $B_2$ be the other
component of $G-e$. For $i=1,2$, the induction hypothesis implies that
$B_i$ ($i=1,2$) admits a colouring by $N_i$-tuples from a list of
$\lfloor kN_i\rfloor$ colours, for a suitable integer $N_i$. Setting
$N$ to be a common multiple of $N_1$ and $N_2$ such that $kN$ is an
integer, we see that each $B_i$ has an $N$-tuple colouring by colours
$\Setx{1,\dots,kN}$. Furthermore, since $k > 2$, we may permute the
colours used for $B_1$ so as to make the endvertices of $e$ coloured
by disjoint $N$-tuples. The result is a valid $N$-tuple colouring of
$G$ by $kN$ colours, showing $\chi_f(G)\leq k$.

We may thus assume that $G$ is bridgeless; in particular, it has
minimum degree 2 or 3. We may also assume that it contains a vertex of
degree 2 for otherwise we are done by the results of
Sections~\ref{sec:algorithm}--\ref{sec:phase5}. If $G$ contains at
least two vertices of degree 2, we can form a graph $G''$ by taking
two copies of $G$ and joining the two copies of each vertex of degree
2 by an edge. Since $G''$ is a cubic bridgeless supergraph of $G$, we
find $\chi_f(G)\leq k$.

It remains to consider the case where $G$ is bridgeless and contains
exactly one vertex $v_0$ of degree 2. Let $G_0$ be the bridgeless
cubic graph obtained by suppressing $v_0$, and let $e_0$ denote the
edge corresponding to the pair of edges incident with $v_0$ in $G$. By
Theorem~\ref{t:ks}, $G_0$ has a 2-factor $F_0$ containing $e_0$, such
that $E(F_0)$ intersects every inclusionwise minimal edge-cut of size
3 or 4 in $G_0$.

Let $G_1$ be obtained from two copies of $G$ by joining the copies of
$v_0$ by an edge. Thus, $G_1$ is a cubic graph with precisely one
bridge. The 2-factor $F_0$ of $G_0$ yields a 2-factor $F_1$ of $G_1$
in the obvious way. Moreover, it is not hard to see that every
inclusionwise minimal edge-cut of size 3 or 4 in $G_1$ is intersected
by $E(F_1)$. This is all we need to make the argument of
Sections~\ref{sec:algorithm}--\ref{sec:phase5} work even though $G_1$
is not bridgeless. Consequently, $\chi_f(G_1) \leq k$, and since $G$
is a subgraph of $G_1$, we infer that $\chi_f(G)\leq k$ as well. This
finishes the proof of Theorem~\ref{t:main}.



\begin{thebibliography}{99}

\bibitem{bib:Die-graph}%
  R. Diestel, Graph Theory, 3rd Edition, Springer, 2005.

\bibitem{bib:DSV-subcubic}%
  Z. Dvo\v{r}\'{a}k, J.-S. Sereni and J. Volec, Subcubic triangle-free
  graphs have fractional chromatic number at most $14/5$, submitted
  for publication. Preprint available at \texttt{arXiv:1301.5296
    [math.CO]}.

\bibitem{bib:Faj-size}%
  S. Fajtlowicz, On the size of independent sets in graphs,
  Congr. Numer. 21 (1978), 269--274.

\bibitem{bib:HZ-fractional}%
  H. Hatami and X. Zhu, The fractional chromatic number of graphs of
  maximum degree at most three, SIAM J. Discrete Math. 23 (2009),
  1762--1775.

\bibitem{bib:HT-new}%
  C. C. Heckman and R. Thomas, A new proof of the independence ratio
  of triangle-free cubic graphs, Discrete Math. 233 (2001), 233--237.

\bibitem{bib:Jon-size}%
  K. F. Jones, Size and independence in triangle-free graphs with
  maximum degree three, J. Graph Theory 14 (1990), 525--535.

\bibitem{bib:KS-cycles}%
  T. Kaiser and R. \v{S}krekovski, Cycles intersecting edge-cuts of
  prescribed sizes, SIAM J. Discrete Math. 22 (2008), 861--874.

\bibitem{bib:Liu-upper}%
  C.-H. Liu, An upper bound on the fractional chromatic number of
  triangle-free subcubic graphs, preprint at \texttt{arXiv:1211.4229v1
    [math.CO]}.

\bibitem{bib:LP-fractional}%
  L. Lu and X. Peng, The fractional chromatic number of triangle-free
  graphs with $\Delta\leq 3$, preprint at \texttt{arXiv:1011.2500v1
    [math.CO]}.

\bibitem{bib:SU-fractional}%
  E. R. Scheinerman and D. H. Ullman, Fractional graph theory, Wiley
  and Sons, 1997.

\bibitem{bib:Sta-some}%
  W. Staton, Some Ramsey-type numbers and the independence ratio,
  Trans. Amer. Math. Soc. 256 (1979) 353--370.

\end{thebibliography}
\end{document}